\numberwithin{equation}{section}
\newtheorem{theorem}{Theorem}[section]
\newtheorem{definition}{Definition}
\newtheorem{corollary}[theorem]{Corollary}
\newtheorem{lemma}[theorem]{Lemma}
\newtheorem{proposition}[theorem]{Proposition}
\newtheorem{condition}{Condition}
\newtheorem{remark}[theorem]{Remark}
\renewcommand{\P}{\mathbb{P}}
\newcommand{\E}{\mathbb{E}}
\newcommand{\R}{\mathbb{R}}
\newcommand{\supp}{\text{supp}}
\newcommand{\norm}[1]{\left| \left|#1\right| \right|}
\newcommand{\snorm}[1]{| | #1 | |}
\newenvironment{enumerate*}%
\title{Adaptive Bernstein--von Mises theorems in\\Gaussian white noise}
\author{Kolyan Ray\footnote{Mathematical Institute, Leiden University, P.O. Box 9512, 2300 RA Leiden, The Netherlands. E-mail: \href{mailto:k.m.ray@math.leidenuniv.nl}{k.m.ray@math.leidenuniv.nl}\newline \indent Most of this work was completed during the author's PhD at the University of Cambridge. This work was supported by UK Engineering and Physical Sciences Research Council (EPSRC) grant EP/H023348/1 and the European Research Council under ERC Grant Agreement 320637.}}
\date{}
\begin{document}

\maketitle

\begin{abstract}
We investigate Bernstein--von Mises theorems for adaptive nonparametric Bayesian procedures in the canonical Gaussian white noise model. We consider both a Hilbert space and multiscale setting with applications in $L^2$ and $L^\infty$ respectively. This provides a theoretical justification for plug-in procedures, for example the use of certain credible sets for sufficiently smooth linear functionals. We use this general approach to construct optimal frequentist confidence sets based on the posterior distribution. We also provide simulations to numerically illustrate our approach and obtain a visual representation of the geometries involved.  \\

\noindent\emph{AMS 2000 subject classifications:} Primary 62G20; secondary 62G15, 62G08.\\
\noindent\emph{Keywords and phrases:} Bayesian inference, posterior asymptotics, adaptation, credible set, confidence set.
\end{abstract}

\tableofcontents

\section{Introduction}\label{introduction}

A key aspect of statistical inference is uncertainty quantification and the Bayesian approach to this problem is to use the posterior distribution to generate a \emph{credible set}, that is a region of prescribed posterior probability (often 95\%). This can be considered an advantage of the Bayesian approach since Bayesian credible sets can be computed by simulation. In particular, the Bayesian generates a number of posterior draws and then keeps a prescribed fraction of the draws, discarding the remainder which are considered ``extreme" in some sense. From a frequentist perspective, key questions are whether such a method has a theoretical justification and what is an effective rule for determining which draws to discard. A natural approach is to characterize such draws using a geometric notion, in particular by considering a minimal ball in some metric.

In finite dimensions, the Euclidean distance has a clear interpretation as the natural measure of size. However in infinite dimensions such a notion is less clear-cut: the $L^2$ metric is the natural generalization of the Euclidean norm, but lacks a clear visual interpretation, while $L^\infty$ can be easily visualized but is more difficult to treat mathematically. From the Bayesian perspective of simulating credible sets, the practitioner ultimately seeks a practical and effective rule for sorting through posterior draws and such geometric interpretations can be viewed as somewhat artificial impositions. The aim of this article is therefore to study possible geometric choices of credible sets that behave well from a frequentist asymptotic perspective.

Consider data $Y^{(n)}$ arising from some probability distribution $\P_{f}^{(n)}$, $f \in \mathcal{F}$. We place a prior distribution $\Pi$ on $\mathcal{F}$ and study the behaviour of the posterior distribution $\Pi (\cdot \mid Y^{(n)} )$ under the frequentist assumption $Y^{(n)} \sim \P_{f_0}^{(n)}$ for some non-random true $f_0 \in \mathcal{F}$ as the data size or quality $n \rightarrow \infty$. From such a viewpoint, the theoretical justification for posterior based inference using any (Borel) credible set in finite dimensions is provided by the Bernstein--von Mises (BvM) theorem (see \cite{Le,VV}). This deep result establishes mild conditions on the prior under which the posterior is approximately a normal distribution centered at an efficient estimator of the true parameter. It thus provides a powerful tool to study the asymptotic behaviour of Bayesian procedures and justifies the use of Bayesian simulations for uncertainty quantification.

A BvM in infinite-dimensions fails to hold in even very simple cases. Freedman \cite{Fr} showed that in the basic conjugate $\ell_2$ sequence space setting with both Gaussian priors and data, the BvM does not hold for $\ell_2$-balls centered at the posterior mean -- see also the related contributions \cite{Co,Jo,Lea}. The resulting message is that despite their intuitive interpretation, credible sets based on posterior draws using an $\ell_2$-based selection procedure do not behave as in classical parametric models. Recently, Castillo and Nickl \cite{CaNi,CaNi2} have established fully infinite-dimensional BvMs by considering weaker topologies than the classical $L^p$ spaces. Their focus lies on considering spaces which admit $1/\sqrt{n}$-consistent estimators and where Gaussian limits are possible, unlike $L^p$-type loss. Credible regions selected using these different geometries are shown to behave well, generating asymptotically exact frequentist confidence sets. In this paper, we explore this approach in practice via both theoretical results for \textit{adaptive} priors, as well as by numerical simulations. We consider an empirical Bayes, a hierarchical Bayes and a multiscale Bayes approach.

Before going into more abstract detail, it is useful to consider an example from \cite{CaNi2} to numerically illustrate this approach in practice. Suppose that we observe $Y_1,...,Y_n$ i.i.d. observations from an unknown density $f_0$ on $[0,1]$. We take a simple histogram prior $\Pi$,
\begin{equation}
f = 2^L \sum_{k=0}^{2^L - 1} h_k 1_{I_{Lk}} ,\quad \quad   I_{Lk} = (k2^{-L}, (k+1)2^{-L} ] , \quad k \geq 0 ,
\label{Dirichlet prior}
\end{equation}
where the $h_k$ are drawn from a $\mathcal{D}(1,...,1)$-Dirichlet distribution on the unit simplex in $\R^{2^L}$. Here we ignore adaptation issues and select $L = L_n$ based on the smoothness of the true function. Letting $\{ \psi_{lk} : l \geq 0, \, \, k = 0,...,2^l-1 \}$ denote the standard Haar wavelets and $w_l = l^{1/2 + \epsilon}$ for $\epsilon>0$ small, consider the multiscale credible ball
\begin{equation}
C_n = \left\{ f : \max_{k,l \leq L_n} w_l^{-1}  |\langle f  - \hat{f}_n , \psi_{lk} \rangle | \leq R_n n^{-1/2} \right\}  ,
\label{Dirichlet credible set}
\end{equation}
where $\hat{f}_n$ denotes the posterior mean and $R_n = R(Y_1,...,Y_n)$ is chosen such that $\Pi (C_n \mid Y_1,...,Y_n) = 0.95$. By Proposition 1 of \cite{CaNi2}, $\P_{f_0} (f_0 \in C_n) \rightarrow 0.95$ as $n \rightarrow \infty$, whereas no such result is available for the $L^\infty$-credible ball. Due to the conjugacy of the Dirichlet distribution with multinomial sampling, the posterior distribution can be computed straightforwardly and $R_n$ can be easily obtained by simulation.

For convenience we take $f_0$ to be a Laplace distribution with location parameter $1/2$ and scale parameter 5 that is truncated to $[0,1]$, that is $f_0 (x) \propto e^{-5|x - 1/2|} 1_{[0,1]}(x)$ with $f_0 \in H_2^{s} ([0,1))$ for $s < 3/2$. In Figure \ref{Dirichlet}, we plotted the true density (solid black) and the posterior mean (red) in the cases $n = 1000,2000,5000,10000$. We generated 100,000 posterior draws and plotted the 95\% closest to the posterior mean in the $\mathcal{M}(w)$ sense (grey) to simulate $C_n$. We also used the posterior draws to generate a 95\% credible band in $L^\infty$ by estimating $Q_n$ satisfying $\Pi ( f : \snorm{f - \hat{f}_n}_\infty \leq Q_n \mid Y) = 0.95$ and then plotting $\hat{f_n} \pm Q_n$ (dashed black).

\begin{figure}[h]
\centering
\includegraphics[scale=0.25]{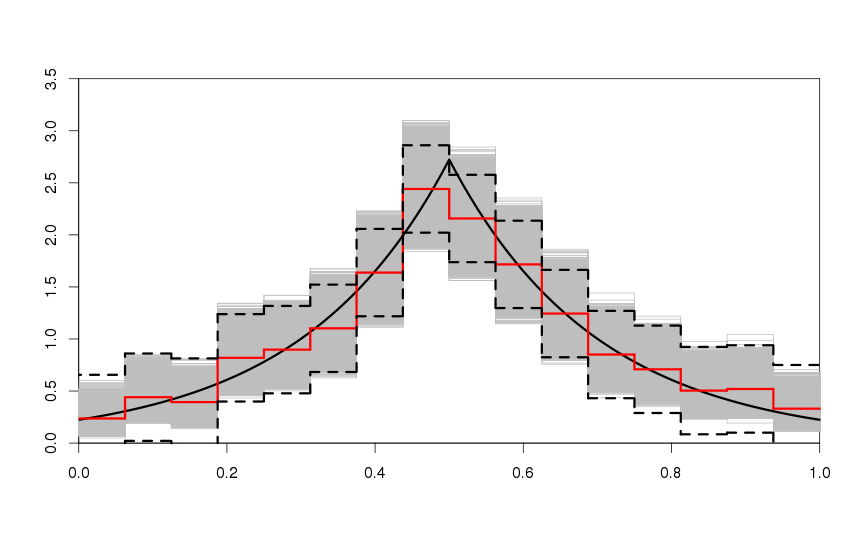}
\includegraphics[scale=0.25]{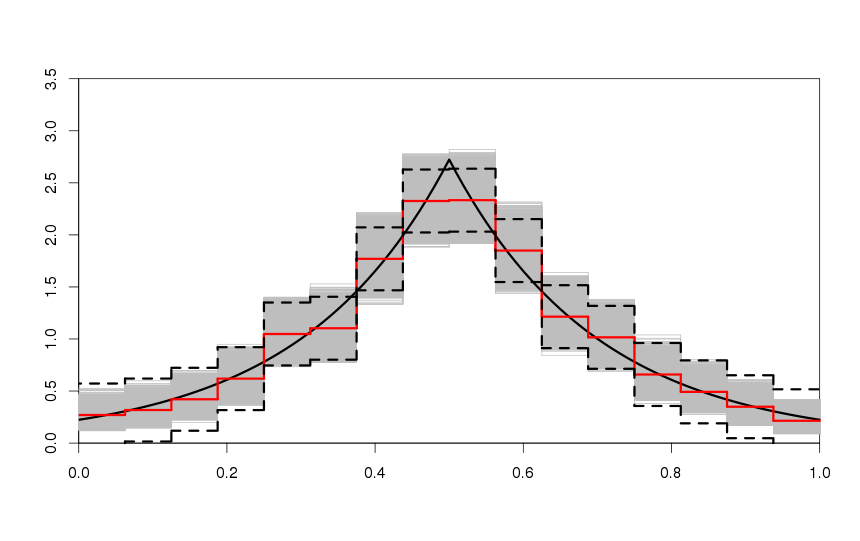}
\includegraphics[scale=0.25]{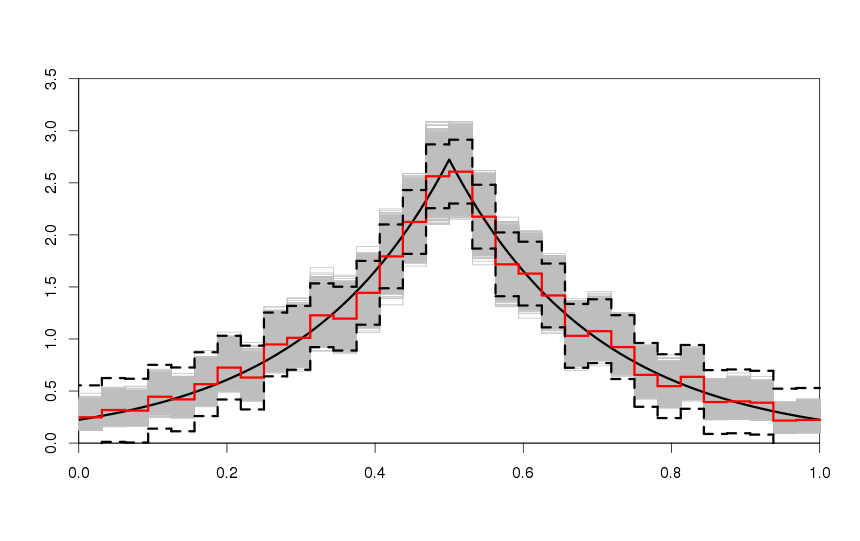}
\includegraphics[scale=0.25]{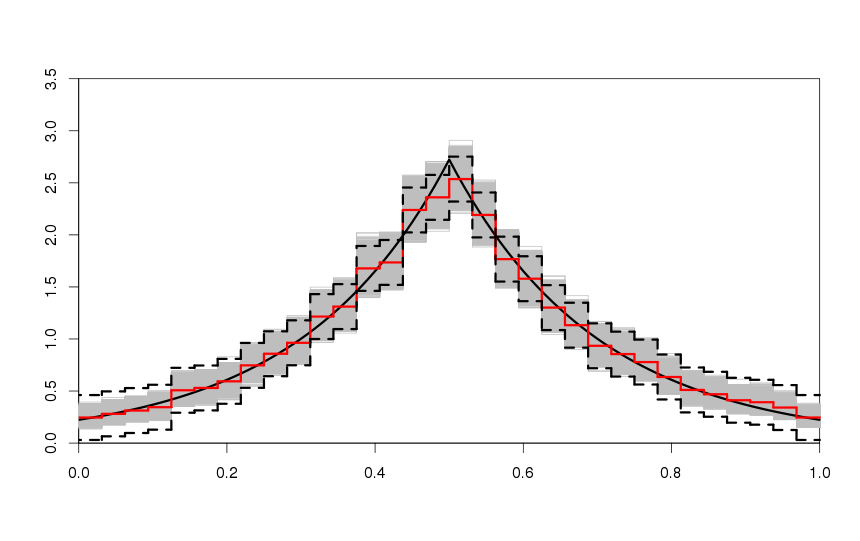}

\caption{\emph{Credible sets based on the Dirichlet prior with the true density function (solid black), the posterior mean (red), a 95\% credible band in $L^\infty$ (dashed black) and the set $C_n$ given in \eqref{Dirichlet credible set} (grey). We have $n = 1000,2000,5000$ and $10000$ respectively.}}
\label{Dirichlet}
\end{figure}

We see that the $L^\infty$ diameter of $C_n$ is strictly greater than that of the $L^\infty$-credible band, with this difference particularly marked at the peak of the density. However, the diameter of $C_n$ is spatially heterogeneous and has greatest width at the peak, whilst having smaller width around points where the true density is more regular. In all cases, $C_n$ contains the true $f_0$, whereas the $L^\infty$ confidence band has more difficulty capturing the peak.

The main message of this numerical example is that simulating the credible set $C_n$, which uses a slightly different geometry, yields a set that does not look particularly strange in practice and in fact resembles an $L^\infty$ credible band. Both approaches are methodologically similar, the only difference being the rule for discarding posterior draws. From a theoretical point of view, the difference between the two sets is far more significant, with $C_n$ yielding exact coverage statements at the expense of unbounded $L^\infty$ diameter. It is however possible to improve upon the naive implementation of such sets to also obtain the optimal $L^\infty$ diameter (see Proposition 1 of \cite{CaNi2} and related results below). Modifying the geometry in such a way to obtain an exact coverage statement therefore comes at little additional cost from a practitioner's perspective.

Nonparametric priors typically contain tuning or hyper parameters, and it is a key challenge to study procedures that select these parameters automatically in a data-driven manner. This avoids the need to make unreasonably strong prior assumptions about the unknown parameter of interest, since incorrect calibration of the prior can lead to suboptimal performance (see e.g. \cite{KnVVVZ}). It therefore makes sense to use an automatic procedure, unless a practitioner is particularly confident that their prior correctly captures the fine details of the unknown parameter, such as its level of smoothness or regularity. Adaptive procedures are widely used in practice, with hyper parameters commonly selected using a hyperprior or an empirical Bayes method. In the case of Gaussian white noise, a number of Bayesian procedures have been shown to be rate adaptive over common smoothness classes (see for example \cite{HoRoSc,KnSzVVVZ,Ra}). Most such frequentist analyses restrict attention to obtaining contraction rates and do not study coverage properties of credible sets. The focus of this paper is therefore to investigate nonparametric BvMs for adaptive priors, with the goal of studying the coverage properties of credible sets.

In the case of Gaussian white noise, there has been recent work \cite{KnVVVZ,Lea} circumventing the need for a BvM by explicitly studying the coverage properties of certain specific credible sets. Of particular relevance is a nice recent paper by Szab\'o et al. \cite{SzVVVZ}, where the authors use an empirical Bayes approach combined with scaling up the radius of $\ell_2$-balls to obtain adaptive confidence sets under a so-called \emph{polished tail condition}. Their approach relies on explicit prior computations and provides an alternative to the more abstract point of view taken here. One of our principal goals is exact coverage statements and this seems more difficult to obtain using such an explicit approach. Since adaptive confidence sets do not exist in full generality, we also require self-similarity conditions on the true parameter to exclude certain ``difficult" functions \cite{GiNi},\cite{HoNi},\cite{Bu}. In particular, we shall consider the procedure of \cite{SzVVVZ} in Section \ref{l2 setting} and obtain exact coverage statements under the self-similarity condition introduced there.

We note other work dealing with BvM results in the nonparametric setting. Leahu \cite{Lea} has studied the impact of prior smoothness on the existence of BvM theorems in the conjugate Gaussian sequence space model. Bickel and Kleijn \cite{BiKl}, Castillo \cite{Ca2}, Rivoirard and Rousseau \cite{RiRo} and Castillo and Rousseau \cite{CaRo}  provide sufficient conditions for BvMs for semiparametric  functionals. For the case of finite-dimensional posteriors with increasing dimension, see Ghosal \cite{Gh} and Bontemps \cite{Bo} for the case of regression or Boucheron and Gassiat \cite{BoGa} for discrete probability distributions.

Much of the approach taken here can equally be applied to other statistical settings such as sparsity and inverse problems \cite{Ra2}, but we restrict to the nonparametric regime for ease of exposition. Since our focus lies on BvM results and coverage statements and this changes little conceptually, we omit such generalizations to maintain mathematical clarity.

\section{Statistical setting}

\subsection{Function spaces and the white noise model}

We use the usual notation $L^p = L^p ([0,1])$ for $p$-times Lebesgue integrable functions and denote by $\ell_p$ the usual sequence spaces. We consider the canonical white noise model, which is equivalent to the fixed design Gaussian regression model with known variance. For $f \in L^2 = L^2([0,1])$, consider observing the trajectory
\begin{equation}
dY_t^{(n)} = f(t) dt + \frac{1}{\sqrt{n}} dB_t, \quad \quad t \in [0,1],
\label{white noise}
\end{equation}
where $dB$ is a standard white noise. By considering the action of an orthonormal basis $\{ e_\lambda \}_{\lambda \in \Lambda}$ on \eqref{white noise}, it is statistically equivalent to consider the Gaussian sequence space model
\begin{equation}
Y^{(n)}_\lambda \equiv  Y_\lambda =  f_\lambda + \frac{1} {\sqrt{n} } Z_\lambda , \quad \quad \quad \lambda \in \Lambda ,
\label{model}
\end{equation}
where the $( Z_\lambda )_{\lambda \in \Lambda}$ are i.i.d. standard normal random variables and the unknown parameter of interest $f = (f_\lambda)_{\lambda \in \Lambda}$ is assumed to be in $\ell_2$. We denote by $\P_{f_0}$ or $\P_0$ the law of $Y$ arising from \eqref{model} under the true function $f_0$. In the following, $\Lambda$ will represent either a Fourier-type basis or a wavelet basis. In the $\ell_2$-setting, \eqref{model} can be interpreted purely in sequence form with $\Lambda = \mathbb{N} $ and we do not need to associate to it a time index $t \in [0,1]$.

In $L^\infty$ we consider a multiscale approach so that $\Lambda = \{ (j,k): j \geq 0 , k=0,...,2^j -1 \}$. In particular, we consider an $S$-regular ($S \geq 0$) wavelet basis of $L^2 ([0,1])$, $\{ \psi_{lk} : l \geq J_0 - 1, \, \, k = 0,...,2^l-1 \}$, with $J_0 \in \mathbb{N}$. For notational simplicity, denote the scaling function $\phi$ by the first wavelet $\psi_{ (J_0-1) 0}$. We consider either periodized wavelets or boundary corrected wavelets (see e.g. \cite{Me} for more details). Moreover, in certain applications we require in addition that the wavelets satisfy a localization property
\begin{equation}
\sup_{x \in [0,1]} \sum_{k=0}^{2^{J_0}-1} | \phi_{{J_0}k}(x)| \leq c(\phi) 2^{{J_0}/2} <  \infty ,  \quad \quad     \sup_{x \in [0,1]} \sum_{k=0}^{2^j-1} | \psi_{jk} (x)| \leq c(\psi) 2^{j/2} < \infty ,
\label{wavelet}
\end{equation}
$j \geq J_0$ (see Section \ref{wavelet section} for more discussion). The sequence model \eqref{model} corresponds to estimating the wavelet coefficients $f_{lk} = \langle f , \psi_{lk} \rangle $, for all $(l,k) \in \Lambda$, since any function $f \in L^2$ generates such a wavelet sequence. Conversely, any such sequence $(f_{lk})$ generates the wavelet series of a function (or distribution if the sequence is not in $\ell_2$) $\sum_{(l,k)} f_{lk} \psi_{lk}$.

For $s, \delta \geq 0$, define the Sobolev spaces at the logarithmic level:
\begin{equation*}
H^{s,\delta} \equiv H_2^{s,\delta} := \left\{ f \in \ell_2 : \norm{f}_{s,2,\delta}^2  : = \sum_{k=1}^\infty k^{2s} (\log k)^{-2\delta} | f_k |^2 < \infty  \right\}  .
\end{equation*}
From this we recover the usual definition of the Sobolev spaces $H^s \equiv H_2^s = H_2^{s,0}$ and by duality we define for $s > 0$, $H_2^{-s} := ( H_2^s)^*$. By standard Hilbert space duality arguments, we can consider $\ell_2$ as a subspace of $H_2^{-s}$ and can similarly define the logarithmic spaces for $s < 0$ and $\delta \geq 0$ using the above series definition. In the $\ell_2$-setting we shall classify smoothness via the Sobolev \emph{hyper rectangles} for $\beta \geq 0$:
\begin{equation*}
\mathcal{Q} (\beta , R) = \left\{  f \in \ell_2 : \sup_{k \geq 1} k^{2\beta + 1} f_k^2 \leq R \right\} .
\end{equation*}

In the $L^\infty ([0,1])$-setting we consider multiscale spaces: for a monotone increasing sequence $w = (w_l)_{l \geq 1}$ with $w_l \geq 1$, define
\begin{equation*}
\mathcal{M} = \mathcal{M} (w) = \left\{ x = (x_{lk} ) : \norm{x}_{\mathcal{M}(w)} := \sup_{l \geq 0} \frac{1}{w_l} \max_k |x_{lk}| < \infty  \right\}  
\end{equation*}
(for further references to multiscale statistics see \cite{CaNi2}). A separable closed subspace is obtained by considering the restriction
\begin{equation*}
\mathcal{M}_0  = \mathcal{M}_0 (w) = \left\{ x \in \mathcal{M}(w) : \lim_{l \rightarrow \infty} \frac{1}{w_l} \max_k | x_{lk} | = 0   \right\} ,
\end{equation*}
that is those (weighted) sequences in $\mathcal{M} (w)$ that converge to 0. Note that $\mathcal{M}$ contains the space $\ell_2$, since $\norm{x}_\mathcal{M} \leq \norm{x}_{\ell_2}$ as $w_l \geq 1$. In this setting, we consider norm-balls in the Besov spaces $B_{\infty\infty}^\beta ([0,1])$,
\begin{equation*}
\mathcal{H} (\beta, R) = \{ f = (f_{lk})_{(l,k) \in \Lambda} : |f_{lk}| \leq R 2^{-l(\beta + 1/2)} , \, \, \forall (l,k) \in \Lambda  \} .
\end{equation*}
We recall that $B_{\infty\infty}^\beta ([0,1]) = C^\beta ([0,1])$, the classical H\"older (-Zygmund in the case $\beta \in \mathbb{N}$) spaces. For more details on these embeddings and identifications see \cite{Me}.

Whether an $\ell_2$-white noise defines a tight random element of $\mathcal{M}_0 (w)$ depends on the weighting sequence $(w_l)$. Recall that we call a sequence $( w_l )_{l \geq 1}$ \emph{admissible} if $w_l / \sqrt{l} \nearrow \infty$ as $l \rightarrow \infty$ \cite{CaNi2}. Let $Z = \{ Z_\lambda = \langle Z , e_\lambda \rangle : \lambda \in \Lambda \}$, where $Z_\lambda \sim N(0,1)$ i.i.d., denote the Gaussian white noise in \eqref{model}. We have from \cite{CaNi,CaNi2} that for $\delta > 1/2$ and $(w_l)$ an admissible sequence, $Z$ defines a tight Gaussian Borel random variable on $H_2^{-1/2,\delta}$ and $\mathcal{M}_0 (w)$ respectively, which we denote $\mathbb{Z}$. In view of this tightness, we can consider \eqref{white noise} as a Gaussian shift model:
\begin{equation*}
\mathbb{Y}^{(n)} = f + \frac{1}{\sqrt{n}} \mathbb{Z} ,
\end{equation*}
where the above inequality is in the $H_2^{-1/2,\delta}$- or $\mathcal{M}_0(w)$-sense. Since $\sqrt{n} ( \mathbb{Y}^{(n)} - f ) = \mathbb{Z}$ in $H_2^{-1/2,\delta}$ or $\mathcal{M}_0(w)$, it immediately follows that $\mathbb{Y}^{(n)}$ is an efficient estimator of $f$ in either norm.

Among the two classes $\{ H_2^{s,\delta} \}_{s \in \R, \delta \geq 0}$ and $\{ \mathcal{M}_0 (w) \}_w$ of spaces considered, one can show that $s = -1/2$, $\delta > 1/2$ and admissibility of $w$ determine the minimal spaces where the law of the $\ell_2$-white noise $Z$ is tight (see \cite{CaNi,CaNi2} for further discussion). We therefore focus attention on these spaces since they provide the threshold for which a weak convergence approach can work. For convenience, we denote $H \equiv H(\delta) \equiv H_2^{-1/2,\delta}$. We further denote the law of $\mathbb{Z}$ in $H$ or $\mathcal{M}_0 (w)$ by $\mathcal{N}$ as appropriate.

\subsection{Weak Bernstein--von Mises phenomena}

Due to the continuous embeddings $\ell_2 \subset H$ and $\ell_2 \subset \mathcal{M}_0 (w)$, any Borel probability measure on $\ell_2$ yields a tight Borel probability measure on $H$ and $\mathcal{M}_0(w)$. Consider a prior $\Pi$ on $\ell_2$ and let $\Pi_n = \Pi ( \cdot \mid Y^{(n)} )$ denote the posterior distribution based on data \eqref{model}. For $S$ a vector space and $z \in S$, consider the map $\tau_z : S \rightarrow S$ given by
\begin{equation*}
\tau_z : f \mapsto \sqrt{n} (f-z) .
\end{equation*}
Let $\Pi_n \circ \tau_{\mathbb{Y}^{(n)}}^{-1}$ denote the image measure of the posterior distribution (considered as a measure on $H$ or $\mathcal{M}_0(w)$) under the map $\tau_{\mathbb{Y}^{(n)}}$. Thus for any Borel set $B$ arising from these topologies,
\begin{equation*}
\Pi_n \circ \tau_{\mathbb{Y}^{(n)}}^{-1} (B) = \Pi ( \sqrt{n} ( f - \mathbb{Y}^{(n)} ) \in B \mid Y^{(n)})  ,
\end{equation*}
so that we can more intuitively write $\Pi_n \circ \tau_{\mathbb{Y}^{(n)}}^{-1} = \mathcal{L} ( \sqrt{n} (f - \mathbb{Y}^{(n)}) \mid Y^{(n)} )$, where $\mathcal{L} (f \mid Y^{(n)})$ denotes the law of $f$ under the posterior. For convenience, we metrize the weak convergence of probability measures via the bounded Lipschitz metric (defined in Section \ref{weak convergence section}). Recalling that we denote by $\mathcal{N}$ the law of the white noise $Z$ in \eqref{model} as an element of $S$, we define the notion of nonparametric BvM.

\begin{definition}\label{weak BvM}
Consider data generated from \eqref{model} under a fixed function $f_0$ and denote by $\P_0$ the distribution of $Y^{(n)}$. Let $\beta_S$ be the bounded Lipschitz metric for weak convergence of probability measures on $S$. We say that a prior $\Pi$ satisfies a weak Bernstein-von Mises phenomenon in $S$  if, as $n \rightarrow \infty$,
\begin{equation*}
\E_0 \beta_S ( \Pi_n \circ \tau_{\mathbb{Y}^{(n)} }^{-1} , \mathcal{N} ) = \E_0 \beta_S (\mathcal{L} ( \sqrt{n} (f - \mathbb{Y}^{(n)} ) \mid Y^{(n)} ) , \mathcal{N}) \rightarrow 0  .
\end{equation*}
Here $S$ is taken to be one of $H(\delta)$ for $\delta > 1/2$, $H^{-s}$ for $s > 1/2$ or $\mathcal{M}_0 (w)$ for $( w_l )_{l \geq 1}$ an admissible sequence.
\end{definition}

The weak BvM says that the (scaled and centered) posterior distribution asymptotically looks like an infinite-dimensional Gaussian distribution in some `weak' sense, quantified via the bounded Lipschitz metric \eqref{bounded Lipschitz metric}. Weak convergence in $S$ implies that these two probability measures are approximately equal on certain classes of sets, whose boundaries behave smoothly with respect to the measure $\mathcal{N}$ (see Sections 1.1 and 4.1 of \cite{CaNi}).

\subsection{Self-similarity}

The study of adaptive BvM results naturally leads to the topic of adaptive frequentist confidence sets. It is known that confidence sets with radius of optimal order over a class of submodels nested by regularity that also possess honest coverage do not exist in full generality (see \cite{HoNi,NiSz} for recent references). We therefore require additional assumptions on the parameters to be estimated and so consider self-similar functions, whose regularity is similar at both small and large scales. Such conditions have been considered in Gin\'e and Nickl \cite{GiNi}, Hoffmann and Nickl \cite{HoNi} and Bull \cite{Bu} and ensure that we remove those functions whose norms (measuring smoothness) are difficult to estimate and which statistically look smoother than they actually are. We firstly consider the $\ell_2$-type self-similarly assumption found in Szab\'o et al. \cite{SzVVVZ}.
\begin{definition}\label{self condition}
Fix an integer $N_0 \geq 2$ and parameters $\rho > 1$, $\varepsilon \in (0,1)$. We say that a function $f \in \mathcal{Q} ( \beta , R)$ is \emph{self-similar} if
\begin{equation*}
\sum_{k=N}^{ \lceil \rho N \rceil } f_k^2 \geq \varepsilon R N^{-2\beta} \quad \quad \text{ for all } \quad N \geq N_0 .
\end{equation*}
We denote the class of self-similar elements of $\mathcal{Q}(\beta,R)$ by $\mathcal{Q}_{SS}(\beta,R,\varepsilon)$.
\end{definition}
This condition says that each block $(f_N,...,f_{\lceil \rho N \rceil })$ of consecutive components contains at least a fixed fraction (in the $\ell_2$-sense) of the size of a ``typical" element of $\mathcal{Q}(\beta,R)$, so that the signal looks similar at all frequency levels (see \cite{SzVVVZ,Ni,NiSz} for further discussion). The parameters $N_0$ and $\rho$ affect the results of this article through the sample size at which the asymptotic results take effect, that is $n \rightarrow \infty$ implicitly implies statements of the form ``for $n\geq n_0$ large enough", where $n_0$ depends on $N_0$ and $\rho$. For this reason, the impact of $N_0$ and $\rho$ is not explicitly mentioned below and one may simply treat these constants as fixed (e.g. $N_0=2$ and $\rho=2$). The lower bound in Definition \ref{self condition} can be slightly weakened to permit for example logarithmic deviations from $N^{-2\beta}$. However since this results in additional technicality whilst adding little extra insight, we do not pursue such a generalization here. It is possible to consider a weaker self-similarity condition using a strictly frequentist approach \cite{NiSz}, though this has not been explored in the Bayesian setting and it is unclear whether our approach extends in such a way. Let $K_j (f) = \sum_k \langle f , \phi_{jk} \rangle \phi_{jk}$ denote the wavelet projection at resolution level $j$. In $L^\infty$ we consider Condition 3 of Gin\'e and Nickl \cite{GiNi}, which can only be slightly relaxed \cite{Bu}.
\begin{definition}\label{self condition 2}
Fix a positive integer $j_0$. We say that a function $f \in \mathcal{H} (\beta,R)$ is \emph{self-similar} if there exists a constant $\varepsilon>0$ such that
\begin{equation*}
\norm{K_j (f) - f}_{\infty} \geq \varepsilon 2^{-j\beta}    \quad \quad \text{ for all } \quad j \geq j_0 .
\end{equation*}
We denote the class of self-similar elements of $\mathcal{H}(\beta,R)$ by $\mathcal{H}_{SS}(\beta,R,\varepsilon)$.
\end{definition}
In particular, since $f \in \mathcal{H} (\beta,R)$, we have that $\norm{K_j (f) - f}_{\infty} \asymp 2^{-j\beta}$ for all $j \geq j_0$. What we really require is that there is at least one significant coefficient at the level $\log_2 ( (n/\log n)^{1/(2\beta + 1)})$ that the posterior distribution can detect. However, this level depends also on unknown constants in practice (see proof of Proposition \ref{credible band}) and so we require a statement for all (sufficiently large) resolution levels as in Definition \ref{self condition 2}. See Gin\'e and Nickl \cite{GiNi} and also Bull \cite{Bu} for further discussion about this condition.

\section{Bernstein--von Mises results}

\subsection{Empirical and hierarchical Bayes in $\ell_2$}\label{l2 setting}

We continue the frequentist analysis of the adaptive priors studied in \cite{KnSzVVVZ,SzVVVZ,SzVVVZ3} in $\ell_2$. For $\alpha >0$ define the product prior on the $\ell_2$-coordinates by the product measure
\begin{equation*}
\Pi_\alpha = \bigotimes_{k=1}^\infty N (0,k^{-2\alpha -1} ), 
\end{equation*}
so that the coordinates are independent. A draw from this distribution will be $\Pi_\alpha$-almost surely in all Sobolev spaces $H_2^{\alpha'}$ for $\alpha ' < \alpha$. The posterior distribution corresponding to $\Pi_\alpha$ is given by
\begin{equation}
\Pi_\alpha (\cdot \mid Y ) = \bigotimes_{k=1}^\infty N \left( \frac{n}{k^{2\alpha + 1} + n } Y_k ,  \frac{1}{k^{2\alpha + 1} + n} \right)  .
\label{eq28}
\end{equation}

If $f_0 \in H^\beta$ and $\alpha = \beta$, it has been shown \cite{BeGo,Ca4,KnVVVZ} that the posterior contracts at the minimax rate of convergence, while if $\alpha \neq \beta$, then strictly suboptimal rates are achieved. Since the true smoothness $\beta$ is generally unknown, two data-driven procedures have been considered in \cite{KnSzVVVZ}. The empirical Bayes procedure consists of selecting the smoothness parameter by using a likelihood-based approach. Namely, we consider the estimate
\begin{equation}
\hat{\alpha}_n =  \underset{ \alpha \in [ 0 , a_n] }{\text{argmax}} \: \:   \ell_n (\alpha) ,
\label{likelihood}
\end{equation}
where $a_n \rightarrow \infty$ is any sequence such that $a_n = o (\log n)$ as $n \rightarrow \infty$ and
\begin{equation*}
\ell_n (\alpha) = - \frac{1}{2} \sum_{k=1}^\infty \left( \log \left( 1 + \frac{n}{k^{2\alpha + 1}   }  \right)  -  \frac{n^2}{k^{2\alpha + 1}  +n } Y_k^2 \right)
\end{equation*}
is the marginal log-likelihood for $\alpha$ in the joint model $(f,Y)$ in the Bayesian setting (relative to the infinite product measure $\otimes_{k=1}^\infty N(0,1)$). The quantity $a_n$ is needed to uniformly control the finite dimensional projections of the empirical Bayes procedure to establish a parametric BvM (Theorem \ref{EB parametric BvM theorem}). The posterior distribution is defined via the plug-in procedure
\begin{equation*}
\Pi_{\hat{\alpha}_n} (\cdot \mid Y )  =  \Pi_\alpha (\cdot \mid Y) \left. \right|_{\alpha = \hat{\alpha}_n }  .
\end{equation*}
If there exist multiple maxima to \eqref{likelihood}, then any of them can be selected.

A fully Bayesian approach is to put a hyperprior on the parameter $\alpha$. This yields the hierarchical prior distribution 
\begin{equation*}
\Pi^H = \int_0^\infty \lambda (\alpha) \Pi_\alpha d\alpha  ,
\end{equation*}
where $\lambda$ is a positive Lebesgue density on $(0,\infty)$ satisfying the following assumption (Assumption 2.4 of \cite{KnSzVVVZ}).

\begin{condition}\label{HB density condition}
Assume that for every $c_1 >0$, there exists $c_2 \geq 0, c_3 \in \R$, with $c_3 > 1$ if $c_2 =0$ and $c_4 >0$ such that for $\alpha \geq c_1$,
\begin{equation*}
c_4^{-1} \alpha^{-c_3}  \exp \left( -c_2 \alpha \right) \leq \lambda (\alpha) \leq c_4 \alpha^{-c_3}  \exp \left( -c_2 \alpha \right)  .
\end{equation*}
\end{condition}

The exponential, gamma and inverse gamma distributions satisfy Condition \ref{HB density condition} for example. Knapik et al. \cite{KnSzVVVZ} showed that both these procedures contract to the true parameter adaptively at the (almost) minimax rate, uniformly over Sobolev balls, and a similar result holds for Sobolev hyper rectangles. Both procedures satisfy weak BvMs in the sense of Definition \ref{weak BvM}.

\begin{theorem}\label{EB weak BvM}
Consider the empirical Bayes procedure described above. For every $\beta,R > 0$ and $s > 1/2$, we have
\begin{equation*}
\sup_{f_0 \in \mathcal{Q}(\beta,R) } \E_0 \beta_{H^{-s}} ( \Pi_{\hat{\alpha}_n} \circ \tau_{\mathbb{Y}}^{-1} , \mathcal{N} ) \rightarrow 0
\end{equation*}
as $n \rightarrow \infty$. Moreover, for $\delta > 2$ we have the (slightly) stronger convergence
\begin{equation*}
\sup_{f_0 \in \mathcal{Q}_{SS} (\beta,R,\varepsilon) } \E_0 \beta_{ H(\delta) } ( \Pi_{\hat{\alpha}_n} \circ \tau_{\mathbb{Y}}^{-1} , \mathcal{N} ) \rightarrow 0
\end{equation*}
as $n \rightarrow \infty$.
\end{theorem}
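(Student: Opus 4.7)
The strategy is to follow the general weak-BvM template of Castillo and Nickl \cite{CaNi}: combine a uniform finite-dimensional parametric BvM for low-frequency projections with a vanishing expected-norm control on the high-frequency tail. Conditionally on $\hat{\alpha}_n$, the posterior \eqref{eq28} is an explicit product of Gaussians, so one can couple the scaled centered posterior synchronously with a target white noise $W=(W_k)$, $W_k$ i.i.d.\ $N(0,1)$. Writing $m_k = -\sqrt{n}\,k^{2\hat{\alpha}_n+1}Y_k/(k^{2\hat{\alpha}_n+1}+n)$ and $\sigma_k^2 = n/(k^{2\hat{\alpha}_n+1}+n)$ for the conditional mean and variance, the coupling $\xi_k := m_k + \sigma_k W_k$ gives $\E[(\xi_k - W_k)^2 \mid Y] = m_k^2 + (\sigma_k-1)^2$. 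Since the BL distance is dominated by the $1$-Wasserstein distance,
\[
  \E_0\, \beta_S\bigl(\Pi_{\hat{\alpha}_n} \circ \tau_{\mathbb{Y}}^{-1}, \mathcal{N}\bigr)^2
  \;\le\; \E_0 \sum_{k \ge 1} w_k^2 \bigl(m_k^2 + (\sigma_k - 1)^2\bigr),
\]
with $w_k^2 = k^{-2s}$ for $S = H^{-s}$ and $w_k^2 = k^{-1}(\log k)^{-2\delta}$ for $S = H(\delta)$.

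I would then split the sum at a level $K_n \asymp n^{1/(2\beta+1)}$ (modulo logs). For $k \le K_n$ the truncation $\hat{\alpha}_n \le a_n = o(\log n)$ forces $k^{2\hat{\alpha}_n+1}/n = o(1)$ uniformly, so $\sigma_k \to 1$ and $m_k = O_{\P_0}(k^{2\hat{\alpha}_n+1} n^{-1/2} |Y_k|) = o_{\P_0}(1)$; summed against $w_k^2$ this is the uniform parametric BvM contribution and vanishes thanks to the bounded-support truncation in \eqref{likelihood}. For $k > K_n$ the posterior has essentially collapsed onto the prior, so $\sigma_k^2 \le n/k^{2\hat{\alpha}_n+1}$ and $m_k^2 \le n Y_k^2 \le 2n f_{0,k}^2 + 2 Z_k^2$, and the expected tail is controlled by $n\sum_{k > K_n} w_k^2 f_{0,k}^2$ together with $\sum_{k > K_n} w_k^2$ (variance plus noise).

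The two statements then diverge. For the hyper-rectangle result in $H^{-s}$ ($s > 1/2$), the polynomial decay of $w_k^2$ yields $n\sum_{k > K_n} k^{-2s-2\beta-1} \lesssim n^{(1-2s)/(2\beta+1)} \to 0$ and $\sum_{k > K_n} k^{-2s} \to 0$, so only the crude oracle-type upper bound on $\hat{\alpha}_n$ from \cite{KnSzVVVZ} is needed. For the self-similar result in $H(\delta)$ the weights $k^{-1}(\log k)^{-2\delta}$ are only barely summable, so the bias tail $nK_n^{-2\beta-1}(\log K_n)^{-2\delta}$ is exponentially sensitive to $\hat{\alpha}_n$: any error $|\hat{\alpha}_n - \beta|$ larger than order $1/\log n$ blows $nK_n^{-2\beta-1}$ up polynomially. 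Here I would invoke the sharper localization $|\hat{\alpha}_n - \beta| = O_{\P_0}(1/\log n)$ available under the self-similarity assumption of Definition \ref{self condition} (cf.\ \cite{KnSzVVVZ,SzVVVZ}), which turns the bias tail into a $(\log n)^{O(1) - 2\delta}$ factor that vanishes once $\delta$ is sufficiently large. The principal obstacle is precisely this barely-summable regime: the argument must combine the hard truncation $\hat{\alpha}_n \in [0, a_n]$ (for uniformity on the unlikely complementary event, where nothing better than the crude prior-dominated bound is available) with the self-similarity-based logarithmic concentration of $\hat{\alpha}_n$ around $\beta$ (for sharpness on the good event), and the balance between these two effects is what dictates the explicit constraint $\delta > 2$.
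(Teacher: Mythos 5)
Your coupling architecture is a genuinely different route from the paper's: rather than bounding a Wasserstein distance via the explicit conjugate product structure, the paper establishes a $1/\sqrt{n}$-contraction rate in $S$ (Theorem \ref{thm1}), a finite-dimensional parametric BvM uniform over $\alpha\in[0,a_n]$ (Theorem \ref{EB parametric BvM theorem}), and then invokes the tightness-plus-finite-dimensional criterion of Theorem 8 of \cite{CaNi}. Your coupling inequality itself is correct, and your treatment of the tail $k>K_n$ (the crude bounds $m_k^2\le nY_k^2$ and $(\sigma_k-1)^2\le 1$, valid for every $\alpha$) goes through. The gap is in the range $k\le K_n$.

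There the key step fails. The truncation $\hat\alpha_n\le a_n$ with $a_n\to\infty$, $a_n=o(\log n)$, does \emph{not} force $k^{2\hat\alpha_n+1}/n=o(1)$ uniformly for $k\le K_n\asymp n^{1/(2\beta+1)}$: taking $k=K_n$ and $\alpha=a_n$ gives $k^{2\alpha+1}/n=n^{(2a_n+1)/(2\beta+1)-1}\to\infty$. The truncation only controls \emph{fixed} finite-dimensional projections (for fixed $J$ one has $J^{2a_n+2}/n\to 0$), which is exactly the limited scope in which the paper uses it. More seriously, the posterior bias contribution
\[
n\sum_{k\le K_n} w_k^2\,\frac{k^{2(2\hat\alpha_n+1)}}{(k^{2\hat\alpha_n+1}+n)^2}\,f_{0,k}^2
\]
is increasing in $\hat\alpha_n$, and its supremum over $\alpha\in[0,a_n]$ diverges: it dominates $n\sum_{k>n^{1/(2a_n+1)}}w_k^2 f_{0,k}^2$, which for a worst-case $f_0\in\mathcal{Q}(\beta,R)$ is of order $n^{1-(2s+2\beta)/(2a_n+1)}\to\infty$. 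So the data-dependence of $\hat\alpha_n$ cannot be disposed of by a crude supremum over the truncation interval, and the coordinate-wise computation of $\E_0[m_k^2]$ is likewise invalid since $\hat\alpha_n$ depends on the whole sample. What is actually required is the high-probability localization $\hat\alpha_n\in[\underline\alpha_n,\overline\alpha_n]$ together with the control of the bias functional $h_n$ from \cite{KnSzVVVZ,SzVVVZ}; this is not a ``crude oracle-type upper bound'' but precisely the hard part of the adaptation analysis, and it is what the paper packages into Theorem \ref{thm1}. Your discussion of the self-similar $H(\delta)$ case correctly identifies that $|\hat\alpha_n-\beta|=O_{\P_0}(1/\log n)$ must be invoked there; the missing realization is that the non-self-similar $H^{-s}$ statement needs an analogous (one-sided, oracle-type) localization as well, which your proposal asserts rather than proves.
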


\begin{theorem}\label{HB weak BvM}
Consider the hierarchical Bayes procedure described above, where the prior density $\lambda$ satisfies Condition \ref{HB density condition}. For every $\beta,R > 0$ and $s > 1/2$, we have
\begin{equation*}
\sup_{f_0 \in \mathcal{Q}(\beta,R) } \E_0 \beta_{H^{-s}} ( \Pi_n^H \circ \tau_{\mathbb{Y}}^{-1} , \mathcal{N} ) \rightarrow 0
\end{equation*}
as $n \rightarrow \infty$. Moreover, for $\delta > 2$ we have the (slightly) stronger convergence
\begin{equation*}
\sup_{f_0 \in \mathcal{Q}_{SS} (\beta,R,\varepsilon) }  \E_0 \beta_{ H(\delta) } ( \Pi_n^H \circ \tau_{\mathbb{Y}}^{-1} , \mathcal{N} ) \rightarrow  0
\end{equation*}
as $n \rightarrow \infty$.
\end{theorem}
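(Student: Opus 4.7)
The plan is to reduce the hierarchical Bayes statement to the empirical Bayes one by exploiting the mixture representation of the hierarchical posterior. Writing $\pi_n(\alpha \mid Y) \propto p_\alpha(Y)\lambda(\alpha)$ for the marginal posterior of the hyperparameter, we have
\[
\Pi^H_n(B) \;=\; \int_0^\infty \Pi_\alpha(B \mid Y)\,\pi_n(\alpha \mid Y)\,d\alpha,
\]
and applying the dual definition of the bounded Lipschitz metric through the integrand together with Fubini yields
\[
\beta_S\!\left(\Pi^H_n \circ \tau_{\mathbb{Y}}^{-1}, \mathcal{N}\right) \;\le\; \int_0^\infty \beta_S\!\left(\Pi_\alpha(\cdot \mid Y)\circ\tau_{\mathbb{Y}}^{-1}, \mathcal{N}\right)\,\pi_n(\alpha\mid Y)\,d\alpha.
\]
So it suffices to establish a \emph{uniform} non-adaptive BvM for $\Pi_\alpha(\cdot\mid Y)$ over a suitable range of $\alpha$, and to show that $\pi_n(\cdot\mid Y)$ concentrates on that range.

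For the concentration step I would invoke the marginal posterior concentration results for $\alpha$ established in \cite{KnSzVVVZ}. Over general Sobolev hyper rectangles this confines $\pi_n(\cdot\mid Y)$ to an interval $[\underline{\alpha},\overline{\alpha}_n]$ bounded away from $0$ with $\overline{\alpha}_n \le a_n = o(\log n)$; under the self-similarity assumption it concentrates far more tightly around the true smoothness $\beta$, up to logarithmic factors. In either case the tail mass is $o_{\P_0}(1)$ and contributes only $o(1)$ in $\P_0$-expectation to the bound above.

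For the uniform non-adaptive BvM I would adapt the Gaussian posterior computations underlying Theorem \ref{EB weak BvM}, keeping $\alpha$ free in the concentration set rather than plugged in at the single value $\hat{\alpha}_n$. In the $H^{-s}$ case, the weight $k^{-2s}$, $s>1/2$, suppresses high frequencies strongly enough that the discrepancy between the explicit Gaussian posterior \eqref{eq28} and the target $\mathcal{N}$ is uniformly $o_{\P_0}(1)$ in $\alpha$ on the concentration interval. In the $H(\delta)$ case, the logarithmic weights $(\log k)^{-\delta}$, $\delta>2$, combined with the tight concentration of $\pi_n(\cdot\mid Y)$ near $\beta$ ensured by self-similarity, prevent the Gaussian prior from oversmoothing the relevant signal frequencies, giving the uniformity.

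The main obstacle is precisely this uniformity in the $H(\delta)$ topology: the posterior bias and posterior variance depend sensitively on $\alpha$ at high frequencies, so one must carefully calibrate the scale of the concentration set for $\pi_n(\cdot\mid Y)$ (which is where self-similarity plays the decisive role) against the decay rate of the $H(\delta)$ weights, and verify that the non-adaptive remainder bounds from \cite{CaNi} survive integration against the random $\alpha$. Once this is in place, splitting the integral into the concentration set and its complement, combining the uniform non-adaptive BvM there with the mass-escape bound on the complement, and letting $n\to\infty$ gives the theorem.
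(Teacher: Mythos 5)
Your overall strategy is, at bottom, the one the paper uses, but pushed through at a different level. The convexity bound
\begin{equation*}
\beta_S\big(\Pi^H_n\circ\tau_{\mathbb{Y}}^{-1},\mathcal{N}\big)\;\le\;\int_0^\infty \beta_S\big(\Pi_\alpha(\cdot\mid Y)\circ\tau_{\mathbb{Y}}^{-1},\mathcal{N}\big)\,\pi_n(\alpha\mid Y)\,d\alpha
\end{equation*}
is valid, and the paper applies exactly this Fubini argument in the proof of Theorem \ref{parametric BvM theorem} --- but only to the \emph{finite-dimensional projections}, where uniformity of the total variation bound over $\{\alpha\le\overline{\alpha}_n\}$ reduces to an elementary Kullback--Leibler computation. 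For the infinite-dimensional part the paper does not attempt a uniform non-adaptive BvM in $S$; it instead proves a $1/\sqrt{n}$-contraction rate in $S$ for the hierarchical posterior itself (Theorem \ref{thm2}, obtained by modifying Theorem 2.5 of \cite{KnSzVVVZ}) and then runs the three-term projection decomposition of Theorem 8 of \cite{CaNi}. So the proposals differ in organization rather than in spirit, but the paper's organization is not an accident: it is what lets the hard estimate be stated as a contraction result about the mixture posterior rather than as a statement uniform in $\alpha$.

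The genuine gap is in your step establishing the ``uniform non-adaptive BvM over the concentration set.'' First, the concentration claim is not correct as stated: over general $\mathcal{Q}(\beta,R)$ the marginal posterior of $\alpha$ is only confined to the $f_0$-dependent interval $[\underline{\alpha}_n,\overline{\alpha}_n]$ of Section \ref{other paper section}; $\overline{\alpha}_n$ is \emph{not} bounded by $a_n=o(\log n)$ (that truncation belongs to the empirical Bayes maximization \eqref{likelihood}, not to the hierarchical prior) and can be arbitrarily large --- indeed $h_n\equiv 0$ for some $f_0\in\mathcal{Q}(\beta,R)$. Second, for a fixed $\alpha$ well above $\beta$ the non-adaptive weak BvM genuinely fails: the $\sqrt{n}$-rescaled posterior bias $\sqrt{n}\,k^{2\alpha+1}(k^{2\alpha+1}+n)^{-1}Y_k$ need not vanish even coordinatewise, and the tail contribution $n\sum_{k>n^{1/(2\alpha+1)}}k^{-2s}f_{0,k}^2$ to $n\,\E^{\Pi}\snorm{\pi_{>j}(f-\mathbb{Y})}_{H^{-s}}^2$ diverges unless the coefficients of $f_0$ beyond level $n^{1/(2\alpha+1)}$ are genuinely small. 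The argument only closes because $\overline{\alpha}_n$ is \emph{defined} through $h_n$, which is precisely the quantity controlling this bias; the ``uniform BvM over the concentration set'' therefore cannot be separated from the defining property of that set, and this interplay --- not a weight-decay heuristic --- is the actual content of Theorem \ref{thm2}. Under self-similarity the interval does collapse to $\beta\pm C/\log n$ (Lemma \ref{posterior median lemma}) and your description of the $H(\delta)$ case is then correct in spirit, but the $H^{-s}$ statement over all of $\mathcal{Q}(\beta,R)$ needs the $h_n$-based argument, which your sketch defers.
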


The requirement of self-similarity for a weak BvM in $H(\delta)$ could conceivably be relaxed, but such an assumption is natural since it is anyway needed for the construction of adaptive confidence sets in Section \ref{credible set section}. It is not clear whether this is a fundamental limit or a technical artefact of the proof. The condition $\delta > 2$ is also required for technical reasons.

Whilst minimax optimality is clearly desirable from a theoretical frequentist perspective, it may be too stringent a goal in our context. Using a purely Bayesian point of view, we derive an analogous result to Doob's almost sure consistency result. Specifically, a weak BvM holds in $H(\delta)$ for prior draws, almost surely under both the empirical Bayes and hierarchical priors. For this, it is sufficient to show that prior draws are self-similar almost surely.

\begin{proposition}\label{Doob result}
The parameter $f_0$ is self-similar in the sense of Definition \ref{self condition}, $\Pi_\alpha$-almost-surely for any $\alpha > 0$. Consequently, $\Pi_{\hat{\alpha}_n}$ and $\Pi^H$ satisfy a weak BvM in $H(\delta)$ for $\delta > 2$, $\Pi_\alpha$-a.s., $\alpha >0$, and $\Pi^H$-a.s. respectively.
\end{proposition}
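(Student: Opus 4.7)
The statement factors cleanly into the almost-sure self-similarity claim for a draw $f_0 \sim \Pi_\alpha$ and the deduction of the two weak BvMs from it. The latter is mostly bookkeeping, applying Theorems \ref{EB weak BvM} and \ref{HB weak BvM} sample-path-wise, and for the hierarchical procedure reducing to the product case via the Fubini representation $\Pi^H = \int_0^\infty \lambda(\alpha)\,\Pi_\alpha\,d\alpha$ after conditioning on the hyperparameter. I would therefore concentrate the work on the self-similarity assertion.

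Fix $\alpha > 0$ and $\rho > 1$, and represent the prior coordinates as $f_{0,k} = k^{-\alpha - 1/2} g_k$ with $g_k$ i.i.d.\ $N(0,1)$. For membership in a Sobolev hyperrectangle, pick any $\beta < \alpha$ and note that $k^{2\beta+1} f_{0,k}^2 = k^{-2(\alpha - \beta)} g_k^2$. The Gaussian tail $\P(g_k^2 > t) \leq 2 e^{-t/2}$ combined with Borel--Cantelli shows that $\sup_k k^{-2(\alpha - \beta)} g_k^2 < \infty$ almost surely, yielding a random $R(\omega) < \infty$ with $f_0 \in \mathcal{Q}(\beta, R)$.

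The heart of the proof is the block lower bound. Set $S_N := \sum_{k=N}^{\lceil \rho N \rceil} f_{0,k}^2 = \sum_k k^{-2\alpha - 1} g_k^2$. Straightforward computation gives $\E S_N \sim c_\rho N^{-2\alpha}$ and $\sum_k k^{-4\alpha - 2} \sim c'_\rho N^{-4\alpha - 1}$. I would apply the Laurent--Massart exponential inequality for weighted $\chi^2_1$ sums with weights $a_k = k^{-2\alpha - 1}$ to obtain
\[
\P\!\left( S_N \leq \tfrac{1}{2} \E S_N \right) \leq \exp(-c_1 N)
\]
for some $c_1 = c_1(\alpha, \rho) > 0$, since the critical deviation scale $(\E S_N)^2 / \sum_k a_k^2$ is of order $N$. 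Exponential summability across $N$ and Borel--Cantelli then produce a random $N_0$ such that $S_N \geq \tfrac{1}{2} c_\rho N^{-2\alpha}$ for every $N \geq N_0$ almost surely.

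The principal obstacle is the consistency of parameters: the block lower bound has size $N^{-2\alpha}$, whereas self-similarity at the rectangle $\mathcal{Q}(\beta, R)$ with $\beta < \alpha$ asks for a bound of order $\varepsilon R N^{-2\beta}$. Matching the two forces a judicious choice of $\beta$ close to $\alpha$ (together with $\varepsilon$ and $N_0$) so that the stronger $N^{-2\alpha}$-bound dominates $\varepsilon R N^{-2\beta}$ in the relevant range, absorbing the random constants coming from the Gaussian-maximum and the $\chi^2$-concentration steps. Once this calibration is carried out, $f_0 \in \mathcal{Q}_{SS}(\beta, R, \varepsilon)$ almost surely, and Theorems \ref{EB weak BvM} and \ref{HB weak BvM} deliver the weak BvMs in $H(\delta)$, $\delta > 2$, along $\Pi_{\hat{\alpha}_n}$- and $\Pi^H$-typical paths.
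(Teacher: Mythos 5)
Your representation $f_{0,k}=k^{-\alpha-1/2}g_k$, the Laurent--Massart bound for the weighted $\chi^2$ block sums $S_N$, and the Borel--Cantelli conclusion that $S_N\geq \tfrac12\E S_N\asymp N^{-2\alpha}$ for all $N\geq N_0(\omega)$ reproduce the paper's argument almost verbatim (the paper centres $\sum_k k^{-2\alpha-1}(g_k^2-1)$, takes $x=MN$ with $M$ small, and gets $\Pi_\alpha(A_{\alpha,N})\leq e^{-MN}$). The gap is in your final calibration step, which cannot be carried out as described. For any fixed $\beta<\alpha$, Definition \ref{self condition} demands $S_N\geq \varepsilon R N^{-2\beta}$ for \emph{all} $N\geq N_0$, while your block bound only supplies $S_N\gtrsim N^{-2\alpha}$. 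Since $N^{-2\alpha}/N^{-2\beta}=N^{-2(\alpha-\beta)}\to 0$, the required bound eventually exceeds the available one for every fixed $\varepsilon>0$, no matter how close $\beta$ is to $\alpha$ and no matter how large the random $N_0$ is taken: the domination you invoke goes in the wrong direction. Taking $\beta$ ``close to $\alpha$'' does not repair this, because the failure occurs in the tail $N\to\infty$ for each fixed $\beta<\alpha$.

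The paper resolves the tension by asserting self-similarity with exponent exactly $\alpha$ (this is made explicit in the remark following Proposition \ref{Doob result}): it proves only the block lower bound $\sum_{k=N}^{\lceil\rho N\rceil}f_k^2\geq \varepsilon R N^{-2\alpha}$ for all $N\geq N_0(\omega)$, valid once $\varepsilon<(1-\rho^{-2\alpha})/(2\alpha R)$, using precisely the exponential inequality and Borel--Cantelli step you wrote down. Your preliminary observation that $f_0\in\mathcal{Q}(\beta,R(\omega))$ for every $\beta<\alpha$ is correct but is not the membership you need: at those levels the block bound is too weak, whereas at the level $\alpha$ where the block bound has the right order one has $\sup_k k^{2\alpha+1}f_{0,k}^2=\sup_k g_k^2=\infty$ a.s., so the hyperrectangle containment must be read at exponent $\alpha$ in the looser sense of the remark. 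To complete your proof, drop the $\beta<\alpha$ calibration entirely and state and prove the lower bound at exponent $\alpha$ with the explicit $\varepsilon$ above; the deduction of the two weak BvMs then proceeds as you indicate.
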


In particular, $f$ satisfies Definition \ref{self condition} with smoothness $\alpha$ and parameters $\rho > 1$ and $\varepsilon = \varepsilon (\alpha,\rho,R) >0$ sufficiently small and random $N_0$ sufficiently large, $\Pi_\alpha$-almost surely. As a simple corollary to Theorems \ref{EB weak BvM} and \ref{HB weak BvM}, we have that the rescaled posteriors merge weakly (with respect to weak convergence on $H(\delta)$) in the sense of Diaconis and Freedman \cite{DiFr}. By Proposition 2.1 of \cite{PeRoSc}, we immediately have that the unscaled posteriors merge weakly with respect to the $\ell_2$-topology since they are both consistent \cite{KnSzVVVZ}. However, in the case of bounded Lipschitz functions (rather than the full case of continuous and bounded functions), we can improve this result to obtain a rate of convergence.

\begin{corollary}
For every $\beta,R > 0$, $s > 1/2$ and $\delta > 2$, we have
\begin{equation*}
\sup_{f_0 \in \mathcal{Q} (\beta,R) } \E_0 \beta_{H^{-s}} ( \Pi_n^H \circ \tau_\mathbb{Y}^{-1} , \Pi_{\hat{\alpha}_n} \circ \tau_\mathbb{Y}^{-1} ) \rightarrow 0
\end{equation*}\begin{equation*}
\sup_{f_0 \in \mathcal{Q}_{SS} (\beta,R,\varepsilon ) } \E_0 \beta_{H(\delta)} ( \Pi_n^H \circ \tau_\mathbb{Y}^{-1} , \Pi_{\hat{\alpha}_n} \circ \tau_\mathbb{Y}^{-1} ) \rightarrow 0
\end{equation*}
as $n \rightarrow \infty$. In particular, for $S = H^{-s}$ or $H(\delta)$ as above,
\begin{equation*}
\sup_{u : \norm{u}_{BL} \leq L } \left| \int_S  u \, d ( \Pi_n^H - \Pi_{\hat{\alpha}_n} ) \right| = o_{\P_0} \left( \frac{ L }{\sqrt{n}} \right)  .
\end{equation*}
\end{corollary}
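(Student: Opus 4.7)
The first two displays are immediate consequences of the triangle inequality for the bounded Lipschitz metric: writing $\beta_S(\Pi_n^H \circ \tau_{\mathbb{Y}}^{-1}, \Pi_{\hat\alpha_n} \circ \tau_{\mathbb{Y}}^{-1}) \leq \beta_S(\Pi_n^H \circ \tau_{\mathbb{Y}}^{-1}, \mathcal{N}) + \beta_S(\mathcal{N}, \Pi_{\hat\alpha_n} \circ \tau_{\mathbb{Y}}^{-1})$ and taking $\E_0$ on both sides, each summand on the right tends to zero uniformly over the relevant parameter class by Theorems \ref{EB weak BvM} and \ref{HB weak BvM}.

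For the third display, the plan is to extract the $1/\sqrt{n}$ factor via the change of variables $g = \sqrt{n}(f - \mathbb{Y})$. For any $u$ with $\|u\|_{BL} \leq L$, since $\Pi_n^H - \Pi_{\hat\alpha_n}$ is a signed probability measure, subtracting the constant $u(\mathbb{Y})$ gives $\int u \, d(\Pi_n^H - \Pi_{\hat\alpha_n}) = (L/\sqrt{n}) \int w \, d(\tilde\Pi_n^H - \tilde\Pi_{\hat\alpha_n})$, where $w(g) := (\sqrt{n}/L)[u(\mathbb{Y} + g/\sqrt{n}) - u(\mathbb{Y})]$ is $1$-Lipschitz with $w(0) = 0$, so $|w(g)| \leq \|g\|_S$. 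It thus suffices to show $\sup_w |\int w \, d(\tilde\Pi_n^H - \tilde\Pi_{\hat\alpha_n})| = o_{\P_0}(1)$ uniformly over $1$-Lipschitz $w$ with $w(0)=0$—a Wasserstein-type statement genuinely stronger than the BL convergence of the first two displays, since such $w$ is not bounded in sup-norm.

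I would bridge this gap by truncation. For a $1$-Lipschitz cutoff $\chi_M$ equal to $1$ on $\{\|g\|_S \leq M\}$ and vanishing outside $\{\|g\|_S \leq 2M\}$, write $w = w\chi_M + w(1-\chi_M)$. The first piece has BL norm $O(M)$, so its integral against $\tilde\Pi_n^H - \tilde\Pi_{\hat\alpha_n}$ is at most $O(M) \cdot \beta_S(\tilde\Pi_n^H, \tilde\Pi_{\hat\alpha_n}) = o_{\P_0}(1)$ as soon as $M = M_n$ grows slowly enough. The tail piece is bounded by $M_n^{-1} \int \|g\|_S^2 \, d(\tilde\Pi_n^H + \tilde\Pi_{\hat\alpha_n})$ via Cauchy--Schwarz and Markov, and a direct calculation using the Gaussian conjugate form \eqref{eq28} shows $\E_0 \int \|g\|_S^2 \, d\tilde\Pi_\alpha = O(1)$ uniformly in $\alpha$ (the posterior-variance contribution is finite thanks to $s > 1/2$ or $\delta > 1/2$, and the bias contribution is controlled via the contraction results of \cite{KnSzVVVZ}). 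The main obstacle lies precisely in making this uniform second-moment bound rigorous in $\P_0$-probability—uniformly over $f_0$ in the parameter class, over the data-driven $\hat\alpha_n$, and (for $\Pi^H$) after integration against the hyperprior $\lambda$. Once this is in place, an appropriate choice of $M_n$ balancing the two contributions yields the claimed $o_{\P_0}(L/\sqrt{n})$.
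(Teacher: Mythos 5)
Your proof of the first two displays is correct and is exactly the intended argument: insert $\mathcal{N}$ via the triangle inequality for $\beta_S$ and invoke Theorems \ref{EB weak BvM} and \ref{HB weak BvM} (noting $\mathcal{Q}_{SS}(\beta,R,\varepsilon)\subset\mathcal{Q}(\beta,R)$ so both theorems apply over the relevant class). You are also right that the third display is \emph{not} an immediate rescaling of the first two: after subtracting the constant $u(\mathbb{Y})$ and changing variables, the factor $L/\sqrt{n}$ comes out only at the price of a test function $w$ that is $1$-Lipschitz but unbounded ($|w(g)|\leq \norm{g}_S$), so one needs a Wasserstein-type strengthening of the weak convergence. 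Your truncation scheme --- bound the bulk by $O(M_n)\,\beta_S(\tilde{\Pi}_n^H,\tilde{\Pi}_{\hat{\alpha}_n})$ with $M_n\rightarrow\infty$ slowly, and the tail by $M_n^{-1}\int\norm{g}_S^2\,d(\tilde{\Pi}_n^H+\tilde{\Pi}_{\hat{\alpha}_n})$ --- is the right way to bridge this, and is essentially a uniform integrability argument of the kind used in Theorem 10 of \cite{CaNi} (and invoked in the proof of Lemma \ref{efficient estimator lemma}).

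The one genuine gap is the step you yourself flag: the uniform bound $\E_0\int\norm{g}_S^2\,d\tilde{\Pi}=O(1)$ is asserted but not proved, and it is not automatic. Decomposing $n\E^{\Pi_\alpha}[\norm{f-\mathbb{Y}}_S^2\mid Y]$ into the posterior-spread term $n\sum_k a_k/(k^{2\alpha+1}+n)$ (which is indeed $O(1)$ for $s>1/2$ resp.\ $\delta>1/2$) and the centering term $n\norm{\hat{f}_{n,\alpha}-\mathbb{Y}}_S^2=n\sum_k a_k\bigl(k^{2\alpha+1}/(k^{2\alpha+1}+n)\bigr)^2Y_k^2$, the second contains a bias contribution $n\sum_k a_k\lambda_k(\alpha)^2 f_{0,k}^2$ that is \emph{not} bounded uniformly in $\alpha\in[0,a_n]$ for a fixed $f_0\in\mathcal{Q}(\beta,R)$: it can diverge when $\alpha$ substantially exceeds $\beta$. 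So you cannot take a supremum over $\alpha$ as in the proof of Theorem \ref{EB parametric BvM theorem}; you must evaluate at the data-driven $\hat{\alpha}_n$ (resp.\ integrate against $\lambda_n(\cdot\mid Y)$) and use that $\hat{\alpha}_n$ (resp.\ the posterior mass of $\lambda_n$) is confined to $[\underline{\alpha}_n,\overline{\alpha}_n]$ with $\P_0$-probability tending to one, together with the defining property of $\overline{\alpha}_n$ via $h_n$, exactly as in the contraction proofs of \cite{KnSzVVVZ} (this is the content of Theorems \ref{thm1} and \ref{thm2}, whose proofs deliver the required moment bounds, and of Lemmas \ref{l2 efficient estimator} and \ref{exponential lemma}). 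In short: the architecture of your argument is sound and the missing ingredient is available from results already established in the paper, but as written the proposal does not carry out the step on which the whole third display rests.
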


\subsection{Slab and spike prior in $L^\infty$}\label{linfty setting}

Consider the slab and spike prior, whose frequentist contraction rate has been analyzed in Castillo and van der Vaart \cite{CaVV}, Hoffmann et al. \cite{HoRoSc} and Castillo et al. \cite{CaScVV}. The assumptions in \cite{HoRoSc} ensure that prior draws are very sparse and only very few coefficients are fitted. We therefore modify the prior slightly so that the prior automatically fits the first few coefficients of the signal without any thresholding. This ensures that the posterior will have a rough approximation of the signal before fitting wavelet coefficients more sparsely at higher resolution levels. This makes sense from a practical point of view by preventing overly sparse models and is in fact necessary from a theoretical perspective (see Proposition \ref{negative BvM in slab and spike}).

Let $J_n = \lfloor \log n / \log 2 \rfloor$ be such that $n/2 < 2^{J_n} \leq n$ and define some strictly increasing sequence $j_0 = j_0 (n) \rightarrow \infty$ such that $j_0 (n) <  J_n$. For the low resolutions $j \leq j_0(n)$ we fit a simple product prior where we draw the $f_{jk}$'s independent from a bounded density $g$ that is strictly positive on $\R$. For the middle resolution levels $j_0(n) < j \leq J_n$, the $f_{jk}$'s are drawn independently from the mixture
\begin{equation*}
\Pi_j (dx) = (1- w_{j,n} ) \delta_0 (dx) + w_{j,n} g(x) dx , \quad \quad \quad n^{-K} \leq w_{j,n} \leq 2^{-j(1 + \theta)} ,
\end{equation*}
for some $K > 0$ and $\theta > 1/2$. All coefficients at levels $j > J_n$ are set to 0. Since this is a product prior, one can sample from the posterior by sampling from each component separately (using either an MCMC scheme or explicit expressions depending on the choice of density $g$). We have a weak BvM in the multiscale space $\mathcal{M}_0 (w)$, where the rate at which the admissible sequence $(w_l)$ diverges depends on the how many coefficients we automatically fit in the prior via the sequence $j_0 (n)$. Recall that a sequence $( w_l )_{l \geq 1}$ is admissible if $w_l / \sqrt{l} \nearrow \infty$.

\begin{theorem}\label{thm3}
Consider the slab and spike prior defined above with lower threshold given by the strictly increasing sequence $j_0 (n) \rightarrow \infty$. The posterior distribution satisfies a weak BvM in $\mathcal{M}_0 (w)$ in the sense of Definition \ref{weak BvM}, that is for every $\beta,R>0$,
\begin{equation*}
\sup_{f_0 \in \mathcal{H} (\beta,R) } \E_0 \beta_{\mathcal{M}_0 (w) } ( \Pi_n \circ \tau_\mathbb{Y}^{-1} , \mathcal{N}  ) \rightarrow 0 
\end{equation*}
as $n \rightarrow \infty$, for any admissible sequence $(w_l)$ satisfying $w_{j_0(n)} / \sqrt{\log n} \nearrow \infty$.
\end{theorem}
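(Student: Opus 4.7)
The plan is to invoke the multiscale weak-convergence criterion of \cite{CaNi2}, which reduces the claim to verifying two conditions: (i) convergence of every finite-dimensional marginal of $\Pi_n \circ \tau_{\mathbb{Y}}^{-1}$ (restricted to levels $l \leq L$) to the corresponding marginal of $\mathcal{N}$, and (ii) the uniform tail estimate
\begin{equation*}
\lim_{L \to \infty} \limsup_{n \to \infty} \E_0 \Pi_n\!\left( \sup_{l > L} \tfrac{1}{w_l}\max_{k}\bigl|\sqrt{n}(f_{lk} - \mathbb{Y}_{lk})\bigr| > \eta \;\Big|\; Y \right) = 0
\end{equation*}
for every $\eta>0$. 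The analogous tail property of $\mathcal{N}$ is automatic from admissibility of $(w_l)$.

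Condition (i) is verified by noting that for any fixed $L$ eventually $L < j_0(n)$, so the prior restricted to coordinates $l \leq L$ is a pure product of the bounded, strictly positive density $g(x)\,dx$. A coordinatewise parametric BvM (or an explicit Laplace-type expansion of the marginal posterior density in each one-dimensional Gaussian observation) then gives that the joint posterior on these finitely many coefficients is asymptotically a product of $N(\mathbb{Y}_{lk}, 1/n)$, which after scaling and centering converges to the finite-dimensional restriction of $\mathcal{N}$.

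For (ii) I split the range $l > L$ into the middle block $L < l \leq J_n$ and the high block $l > J_n$. On the high block the prior places all coefficients at $0$, so $\sqrt{n}(f_{lk} - \mathbb{Y}_{lk}) = -\sqrt{n} f_{0,lk} - Z_{lk}$; since $f_0 \in \mathcal{H}(\beta,R)$ implies $\sqrt{n}|f_{0,lk}| \leq R\sqrt{n}\, 2^{-l(\beta+1/2)}$, which decays geometrically beyond $J_n\sim \log_2 n$, and $\max_{l > J_n}\max_k|Z_{lk}|/w_l \to 0$ in $\P_0$-probability by Gaussian concentration combined with admissibility, this block is negligible. The middle block is the crux. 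Each coordinate carries a posterior mixture $\bar{w}_{lk}\delta_0 + (1-\bar{w}_{lk})\tilde{g}_{lk}$. Borrowing the sparsity/thresholding analysis of \cite{CaVV,HoRoSc,CaScVV}, the prior weights $w_{jn}\leq 2^{-j(1+\tau)}$ ($\tau>1/2$) force $\bar{w}_{lk}\approx 1$ unless $\sqrt{n}|\mathbb{Y}_{lk}| \gtrsim \sqrt{\log n}$; so when the spike is selected the contribution $|\sqrt{n}(f_{lk}-\mathbb{Y}_{lk})| = \sqrt{n}|\mathbb{Y}_{lk}|$ is $O(\sqrt{\log n})$, while on the slab $\tilde{g}_{lk}$ is approximately $N(\mathbb{Y}_{lk},1/n)$ and the scaled centered coordinate is approximately $N(0,1)$. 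Maximising over $k \leq 2^l$ and dividing by $w_l$, both contributions are dominated by $O(\sqrt{\log n})/w_l$ on the spike and $O(\sqrt{l})/w_l$ on the slab. The hypotheses $w_{j_0(n)}/\sqrt{\log n}\nearrow \infty$ and $w_l/\sqrt{l}\nearrow \infty$ force both bounds to be $o(1)$ uniformly over the middle block once $L$ is large.

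The main obstacle is the middle-block analysis: one must obtain a \emph{uniform} (in $l,k$) thresholding statement on a $\P_0$-large event, showing that the posterior mixing weight $\bar{w}_{lk}$ is close to $0$ or $1$ according to the size of $\sqrt{n}|\mathbb{Y}_{lk}|$, and that this dichotomy survives taking $\max_k$ over $2^l$ coordinates per level and the subsequent supremum over $L < l \leq J_n \sim \log_2 n$ levels. This is precisely where the hypothesis $w_{j_0(n)}/\sqrt{\log n}\nearrow\infty$ (and hence the necessity of a non-trivial lower threshold $j_0(n)\to\infty$) enters and is essentially sharp.
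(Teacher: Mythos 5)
Your overall strategy is the same as the paper's: reduce the weak BvM to (a) a parametric BvM for the finite-dimensional projections, which holds because for fixed $L$ eventually $L<j_0(n)$ and the projected prior is a plain product of the positive continuous density $g$, and (b) a uniform tail/tightness bound for the scaled posterior above level $L$, obtained from the support-and-accuracy event of Theorem 3.1 of \cite{HoRoSc} on the thresholded levels. The paper packages (b) as a $1/\sqrt{n}$-contraction rate in $\mathcal{M}(\overline{w})$ for a slightly smaller admissible weight $\overline{w}$ and then invokes the tightness argument of Theorem 8 of \cite{CaNi}; your condition (ii) is the equivalent direct formulation. Two remarks on where your sketch and the actual difficulty diverge.

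First, the part you single out as the crux --- a uniform-in-$(l,k)$ dichotomy for the posterior mixing weights on the levels $j_0(n)<l\leq J_n$ --- is in fact the part that comes for free: the event $A_n$ of \cite{HoRoSc} (no selected coordinate outside $\mathcal{J}_n(\underline{\gamma})$, no unselected coordinate inside $\mathcal{J}_n(\overline{\gamma})$, and accuracy $\overline{\gamma}\sqrt{\log n/n}$ on the selected ones) has posterior probability $1-O(n^{-B})$ uniformly over $\mathcal{H}(\beta,R)$, and on $A_n$ every coordinate at these levels contributes at most $O(\sqrt{\log n})/w_l \leq O(\sqrt{\log n})/w_{j_0(n)} = o(1)$, exactly as you argue. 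No new uniform thresholding statement needs to be proved.

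Second, the genuine gap is at the pure-slab levels $L< l \leq j_0(n)$, which your ``middle block'' silently absorbs. There the prior has no spike, and your justification --- each coordinate is ``approximately $N(\mathbb{Y}_{lk},1/n)$,'' hence the max over $k\leq 2^l$ is $O(\sqrt{l})$ --- does not follow: approximate normality of each one-dimensional marginal (in total variation, say) gives no control whatsoever of a maximum over $2^l$ coordinates, let alone uniformly over the $O(\log n)$ levels up to $j_0(n)$ and over $f_0\in\mathcal{H}(\beta,R)$. What is needed is a quantitative subgaussian bound on the posterior Laplace transform, $\E_0\E^{\Pi}(e^{t\sqrt{n}(f_{lk}-Y_{lk})}\mid Y)\leq Ce^{t^2/2}$, which the paper imports from Lemma 1 of \cite{Ca} and then feeds into the maximal-inequality/chaining argument of Theorem 4 of \cite{CaNi2} to get $\E_0\E^{\Pi}\snorm{\sqrt{n}\,\pi_{j_0}(f-Y)}_{\mathcal{M}}\leq C$, whence Markov's inequality finishes the low-frequency block. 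Without an exponential moment bound of this type your $O(\sqrt{l})/w_l$ estimate for the slab contribution is unsupported, and the tail condition (ii) cannot be closed.
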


Note that in the limiting case $w_l = \sqrt{l}$, we recover $j_0 (n) \simeq \log n$, so that the prior automatically fits the same fixed fraction of the full $2^{J_n} \simeq n$ coefficients. Since we consider only admissible sequences, the fraction of coefficients that the prior fits automatically is asymptotically vanishing. An alternative way to consider this result is in reverse: based on a desired rate in practice, we prescribe an admissible sequence $w_l = \sqrt{l} u_l$, where $u_l$ is some divergent sequence, and then pick $j_0 (n)$ appropriately. Taking $j_0(n)$ to grow more slowly than any power of $\log n$ means $(w_l)$ must grow faster than any power of $l$, resulting in a greater than logarithmic down-weighting of the wavelet coefficients in $\mathcal{M}(w)$. It may therefore be more appropriate to take $j_0(n)$ a power of $\log n$, which yields the following specific case.

\begin{corollary}\label{multiscale BvM example}
Consider the slab and spike prior defined above with lower threshold $j_0 (n) \simeq (\log n)^{\frac{1}{2\epsilon +1}}$ for some $\epsilon > 0$. Then it satisfies a weak BvM in $\mathcal{M}_0 (w)$ in the sense of Definition \ref{weak BvM}, that is for every $\beta,R>0$,
\begin{equation*}
\sup_{f_0 \in \mathcal{H} (\beta,R) }  \E_0 \beta_{\mathcal{M}_0 (w) } ( \Pi_n \circ \tau_\mathbb{Y}^{-1} , \mathcal{N}  ) \rightarrow 0 
\end{equation*}
as $n \rightarrow \infty$ for the admissible sequence $w_l = l^{1/2 + \epsilon} u_l$, where $u_l$ is any (arbitrarily slowly) diverging sequence.
\end{corollary}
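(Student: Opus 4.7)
The plan is to recognize that Corollary \ref{multiscale BvM example} is purely a specialization of Theorem \ref{thm3}: I only need to verify the two hypotheses on the weight sequence $(w_l)$ and on the threshold $j_0(n)$, namely that $w_l = l^{1/2+\epsilon} u_l$ is admissible and that $w_{j_0(n)}/\sqrt{\log n} \nearrow \infty$, and then invoke Theorem \ref{thm3} verbatim.

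For admissibility, I note that $w_l/\sqrt{l} = l^{\epsilon} u_l$, which is monotone increasing in $l$ and diverges since $\epsilon>0$ and $u_l\to\infty$; hence $w$ is admissible in the sense of Castillo--Nickl.

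For the compatibility between the threshold and the weights, I substitute $j_0(n) \simeq (\log n)^{1/(2\epsilon+1)}$ into $w_l = l^{1/2+\epsilon}u_l$. The key arithmetic is the identity
\begin{equation*}
\left(\tfrac{1}{2}+\epsilon\right)\cdot\frac{1}{2\epsilon+1} = \frac{1+2\epsilon}{2(2\epsilon+1)} = \frac{1}{2},
\end{equation*}
so that $j_0(n)^{1/2+\epsilon} \simeq (\log n)^{1/2}$, and therefore
\begin{equation*}
\frac{w_{j_0(n)}}{\sqrt{\log n}} \;\simeq\; \frac{(\log n)^{1/2}\, u_{j_0(n)}}{\sqrt{\log n}} \;=\; u_{j_0(n)},
\end{equation*}
which tends monotonically to infinity by assumption on $u_l$ (after passing to a monotone majorant of $u$, which does not alter the hypothesis of the corollary). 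This verifies the hypothesis $w_{j_0(n)}/\sqrt{\log n}\nearrow\infty$ of Theorem \ref{thm3}.

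With both hypotheses checked, an application of Theorem \ref{thm3} yields the claimed weak BvM uniformly over $\mathcal{H}(\beta,R)$, completing the proof. There is no genuine obstacle here: the content of the corollary is simply the observation that the exponent $1/(2\epsilon+1)$ is exactly the one that balances $j_0(n)^{1/2+\epsilon}$ against $\sqrt{\log n}$, which is why this choice is highlighted as a concrete illustration of the implicit relation in Theorem \ref{thm3}.
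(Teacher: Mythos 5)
Your proposal is correct and coincides with the paper's (implicit) argument: the corollary is exactly the instantiation of Theorem \ref{thm3} obtained by checking that $w_l=l^{1/2+\epsilon}u_l$ is admissible and that the exponent identity $(1/2+\epsilon)/(2\epsilon+1)=1/2$ gives $w_{j_0(n)}\simeq \sqrt{\log n}\,u_{j_0(n)}$, so $w_{j_0(n)}/\sqrt{\log n}\to\infty$. The paper offers no separate proof beyond this observation, so there is nothing further to compare.
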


While the requirement to fit the first few coefficients of the prior is mild and of practical use in nonparametrics, it is naturally of interest to study the behaviour of the posterior distribution with full thresholding, that is when $j_0 (n) \equiv 0$, which we denote by $\Pi'$. In general however, the full posterior contracts to the truth at a rate strictly slower than $1/\sqrt{n}$ in $\mathcal{M}(w)$, so that a $\sqrt{n}$-rescaling of the posterior cannot converge weakly to a limit. This holds even for self-similar functions.

\begin{proposition}\label{negative BvM in slab and spike}
Let $(w_l)$ be any admissible sequence. Then for any $\beta,R>0$, there exists $\varepsilon = \varepsilon (\beta,R,\psi)>0$ and $f_0 \in \mathcal{H}_{SS} (\beta,R,\varepsilon)$ such that along some subsequence $(n_m)$,
\begin{equation*}
\E_0 \Pi ' (\norm{f-\mathbb{Y}}_{\mathcal{M}(w) } \geq M_{n_m} {n_m}^{-1/2} \mid Y^{(n_m)} )  \rightarrow 1
\end{equation*}
for all $M_n \rightarrow \infty$ sufficiently slowly. Consequently, for such an $f_0$, a weak BvM in $\mathcal{M}_0 (w)$ in the sense of Definition \ref{weak BvM} cannot hold.
\end{proposition}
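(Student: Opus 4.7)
The strategy is to construct an explicit self-similar $f_0$ and a subsequence $(n_m)$ along which the fully-thresholded slab-and-spike posterior concentrates on configurations whose $\mathcal{M}(w)$-distance to $\mathbb{Y}$ exceeds $M_{n_m}/\sqrt{n_m}$. The absence of automatically-fit low-resolution coefficients (i.e.\ $j_0(n) \equiv 0$) is precisely what fails the $\sqrt{n}$-rate in $\mathcal{M}(w)$, in contrast to the positive BvM of Theorem \ref{thm3}.

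I would first specify $f_0$ by concentrating one wavelet coefficient per level: set $f_{0, j, k_j} = c \cdot 2^{-j(\beta+1/2)}$ at a single distinguished location $k_j$ for each $j \geq j_0$ and all other coefficients to zero. The constant $c > 0$ is chosen small enough that $f_0 \in \mathcal{H}(\beta,R)$ and large enough that $f_0 \in \mathcal{H}_{SS}(\beta, R, \varepsilon)$ for some $\varepsilon = \varepsilon(c,\beta,R,\psi) > 0$; the self-similarity inequality $\|K_j f_0 - f_0\|_\infty \geq \varepsilon 2^{-j\beta}$ follows from wavelet localization, since a single coefficient of size $c \cdot 2^{-j(\beta+1/2)}$ produces an $L^\infty$-contribution of order $2^{-j\beta}$.

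Next, for each $n$ identify the critical level $j^*(n) \asymp \log_2(n/\log n)/(2\beta+1)$ at which the signal $c \cdot 2^{-j^*(\beta+1/2)}$ coincides with the effective detection threshold $\sqrt{(2\log(1/w_{j,n}) + \log n)/n}$ imposed by the slab-and-spike prior. A direct computation of the marginal spike probability
\[
q_{jk} \;=\; \frac{(1-w_{j,n}) \sqrt{n/(2\pi)}\, e^{-n Y_{jk}^2/2}}{(1-w_{j,n}) \sqrt{n/(2\pi)}\, e^{-n Y_{jk}^2/2} + w_{j,n}\, p_1(Y_{jk})}
\]
shows that, with $\P_0$-probability tending to one, $q_{j, k_j} \to 1$ for all $j > j^*(n)$, so that the posterior draw sets $f_{j, k_j} = 0$ almost surely. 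The resulting deviation is $|f_{j, k_j} - Y_{j, k_j}| = |Y_{j, k_j}| \asymp 2^{-j(\beta+1/2)}$, contributing to $\|f - \mathbb{Y}\|_{\mathcal{M}(w)}$ an amount $\max_{j > j^*(n)} 2^{-j(\beta+1/2)}/w_j$, achieved near $j = j^*(n)$.

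Finally, select $(n_m)$ so that the ratio $\sqrt{\log n_m}/w_{j^*(n_m)}$ is as large as admissibility permits. Admissibility $w_l/\sqrt{l} \nearrow \infty$ does not preclude arbitrarily slow growth of $w_l/\sqrt{l}$, so along a subsequence aligned with integer jumps in $j^*(n_m)$ and with the minimal allowed prior weight $w_{j,n} = n^{-K}$, the bias $\sqrt{n_m} \cdot 2^{-j^*(\beta+1/2)}/w_{j^*(n_m)}$ can be made to diverge as $M_{n_m} \to \infty$ slowly. Combining with control of the posterior variance (using that within each spike regime the rescaled deviation is deterministic given $Y$), one concludes $\E_0 \Pi'(\|f - \mathbb{Y}\|_{\mathcal{M}(w)} \geq M_{n_m} n_m^{-1/2} \mid Y^{(n_m)}) \to 1$, which is incompatible with weak convergence of $\Pi' \circ \tau_\mathbb{Y}^{-1}$ to any tight Borel law on $\mathcal{M}_0(w)$.

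The main obstacle is the tight margin between the parametric rate $n^{-1/2}$ and admissibility $w_l \gg \sqrt{l}$: a crude bound yields a bias only of order $\sqrt{\log n}/w_{j^*(n)} \cdot n^{-1/2}$, which is barely $\gg n^{-1/2}$. The divergence of $M_n$ therefore hinges on a careful coupling of the choice of $c$, the freedom to pick $(n_m)$ after the admissible sequence $(w_l)$ is given, and exploiting that the prior's lower bound $w_{j,n} = n^{-K}$ produces the largest possible threshold. Verifying concentration of the posterior uniformly over the subsequence requires quantitative control of $q_{j, k_j}$ near the detection boundary and rules out the slab mode contributing a compensating spread that would restore the parametric rate.
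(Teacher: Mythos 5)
Your construction places the problematic coefficient at the critical level $j^*(n)\asymp \log_2\bigl((n/\log n)^{1/(2\beta+1)}\bigr)\asymp \log n$, and this is where the argument breaks down. The $\mathcal{M}(w)$-contribution of the thresholded coefficient at that level is
\begin{equation*}
\frac{2^{-j^*(n)(\beta+1/2)}}{w_{j^*(n)}} \asymp \frac{1}{\sqrt{n}}\cdot\frac{\sqrt{\log n}}{w_{j^*(n)}},
\end{equation*}
and admissibility forces $w_l/\sqrt{l}\nearrow\infty$, hence $w_{j^*(n)}\gg\sqrt{j^*(n)}\asymp\sqrt{\log n}$ along \emph{every} subsequence. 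So this contribution is $o(n^{-1/2})$, not "barely $\gg n^{-1/2}$" as you claim in your final paragraph. There is no freedom to "select $(n_m)$ so that $\sqrt{\log n_m}/w_{j^*(n_m)}$ is large": $j^*(n)$ is tied deterministically to $n$, so the ratio tends to $0$ no matter how slowly $w_l/\sqrt{l}$ diverges. This is precisely the mechanism that makes the \emph{positive} result (Theorem \ref{thm3}) work at high frequencies -- the weights absorb the $\sqrt{\log n}$ thresholding loss there -- so the counterexample cannot live at the critical level.

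The failure has to be engineered at \emph{low} resolution levels, where $w_l$ is still small compared to $\sqrt{\log n}$, and this requires decoupling the level of the killer coefficient from $n$. The paper's proof does this by enumerating the indices $(l_m,k_m)$ lexicographically, choosing $n_m$ growing so fast that $(\log n_m)/w_{l_m}^2\to\infty$ (possible for any admissible $(w_l)$ since $l_m\approx\log_2 m$ grows very slowly relative to $n_m$), and placing a coefficient of size $r\sqrt{\log n_m/n_m}$ with $r\le\underline{\gamma}$ at position $(l_m,k_m)$; this is just below the detection threshold, so on the high-probability event $A_{n_m}$ the posterior sets it to zero, producing a deviation $w_{l_m}^{-1}r\sqrt{\log n_m/n_m}\gg n_m^{-1/2}$. (One must also check the Hölder constraint $|f_{0,l_m k_m}|\le R2^{-l_m(\beta+1/2)}$, which holds by taking $n_m$ large, and self-similarity must be added separately via a second family of coefficients $|\langle f_0,\psi_{lk_l}\rangle|=R2^{-l(\beta+1/2)}$, since the near-threshold coefficients alone are far too small at low levels to certify it.) Your reduction from $\mathbb{Y}$ to $f_0$ and your identification of thresholding as the culprit are both on target, but the quantitative heart of the proof -- where in the resolution scale the loss occurs -- is wrong, and the argument as proposed cannot be completed.
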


It is particularly relevant that Proposition \ref{negative BvM in slab and spike} applies to self-similar parameters since a major application of the weak BvM is the construction of adaptive credible regions with good frequentist properties under self-similarity (see Proposition \ref{credible band}). On the level of a $\sqrt{n}$-rescaling as in Definition \ref{weak BvM}, the rescaled posterior distribution asymptotically puts vanishingly small probability mass on any given $\mathcal{M} (w)$-ball infinitely often. This occurs because the posterior selects non-zero coordinates by thresholding at the level $\sqrt{\log n/ n}$ rather than the required $1/\sqrt{n}$ (Lemma 1 of \cite{HoRoSc}). The weighting sequence $(w_l)$ regularizes the extra $\sqrt{\log n}$ factor at high frequencies, but does not do so at low frequencies. This is the reason that the weighting sequence $(w_l)$ depends explicitly on the thresholding factor $\sqrt{\log n}$ in Theorem \ref{thm3}.

It seems that using such an adaptive scheme on low frequencies of the signal causes the weak BvM to fail. This prior closely resembles the frequentist practice of wavelet thresholding, where such a phenomenon has also been observed. For example, Gin\'e and Nickl \cite{GiNi2} require similar (though stronger) assumptions on the number of coefficients that need to be fitted automatically to obtain a central limit theorem for the distribution function of the hard thesholding wavelet estimator in density estimation (Theorem 8 of \cite{GiNi2}).

\section{Applications}

\subsection{Adaptive credible sets in $\ell_2$}\label{credible set section}

We propose credible sets from the hierarchical or empirical Bayes procedures, which we show are adaptive frequentist confidence sets for self-similar parameters. We consider the natural Bayesian approach of using the quantiles of the posterior distribution to obtain a credible set of prescribed posterior probability. By considering sets whose geometry is amenable to the space $H(\delta)$, the weak BvM implies that such credible sets are asymptotically confidence sets.

Recall that $\snorm{f}_{H(\delta)}^2 = \sum_{k=1}^\infty k^{-1} (\log k)^{-2\delta}  f_k^2  $. For a given significance level $0 < \gamma < 1$, consider the credible set
\begin{equation}
C_n = \left\{ f : \snorm{f - \mathbb{Y}}_{H(\delta)} \leq R_n / \sqrt{n} \right\},
\label{eq29}
\end{equation}
where $R_n = R_n (Y, \gamma )$ is chosen such that $\Pi_{\hat{\alpha}_n} ( C_n | Y) = 1-\gamma$ or $\Pi^H ( C_n | Y ) = 1-\gamma$. Since the empirical and hierarchical Bayes procedures both satisfy a weak BvM in $H(\delta)$, we have from Theorem 1 of \cite{CaNi} that in both cases
\begin{equation*}
\P_{f_0} ( f_0 \in C_n) \rightarrow 1-\gamma \quad \quad and \quad \quad R_n = O_{\P_0} (1)
\end{equation*}
as $n \rightarrow \infty$, so that $C_n$ is asymptotically an exact frequentist confidence set (of unbounded $\ell_2$-diameter). We control the diameter of the set using either the estimator $\hat{\alpha}_n$ or the posterior median as a smoothness estimate, and then use the standard frequentist approach of undersmoothing. In the first case, consider
\begin{equation}
\tilde{C}_n = \left\{  f :   \snorm{f - \mathbb{Y}}_{H(\delta)} \leq R_n / \sqrt{n} , \quad \snorm{f - \hat{f}_n}_{H^{\hat{\alpha}_n - \epsilon_n }} \leq C \sqrt{\log n}    \right\} ,
\label{EB credible set definition}
\end{equation}
where $\hat{f}_n$ is the posterior mean, $R_n$ is chosen as in $C_n$, $\epsilon_n$ (chosen possibly data dependently) satisfies $r_1 / (\log n) \leq \epsilon_n \leq (r_2/\log n)\wedge ( \hat{\alpha}_n/2)$ for some $0 < r_1 \leq r_2 \leq \infty$ and $C > 1/r_1$. The undersmoothing by $\epsilon_n$ is necessary since the posterior assigns probability 1 to $H^{\alpha'}$ for $\alpha ' < \hat{\alpha}_n$, while probability 0 to $H^{\hat{\alpha}_n}$ itself. Geometrically, $\tilde{C}_n$ is the intersection of two $\ell_2$-ellipsoids, $C_n$ and an $H^{\hat{\alpha}_n - \epsilon_n}$-norm ball. For a typical element $f$ in $\tilde{C}_n$, the size of the low frequency coordinates of $f$ are determined by $C_n$, while the smoothness condition in $\tilde{C}_n$ acts to regularize the elements of $C_n$ (which are typically not in $\ell_2$) by shrinking the higher frequencies.

\begin{proposition}\label{EB credible set}
Let $0 < \beta_{1} \leq \beta_{2} < \infty$, $R \geq 1$ and $\varepsilon > 0$. Then the confidence set $\tilde{C}_n$ given in \eqref{EB credible set definition} satisfies
\begin{equation*}
\sup_{\substack{f_0 \in \mathcal{Q}_{SS} (\beta,R,\varepsilon) \\\ \beta \in [\beta_{1},\beta_{2}] }} \left| \P_{f_0} (f_0 \in \tilde{C}_n) - (1-\gamma) \right| \rightarrow 0
\end{equation*}
as $n \rightarrow \infty$. For every $\beta \in [\beta_{1}, \beta_{2}]$, uniformly over $f_0 \in \mathcal{Q}_{SS}(\beta,R,\varepsilon)$,
\begin{equation*}
\Pi_{\hat{\alpha}_n} (\tilde{C}_n \mid Y) = 1 - \gamma  + O_{\P_0} \left(  n^{-C'n^{1/(4\beta+2)}} \right) 
\end{equation*}
for some $C' > 0$ independent of $\beta,R,\varepsilon,N_0,\rho$, while the $\ell_2$-diameter satisfies for $\delta >2$,
\begin{equation*}
|\tilde{C}_n |_2 = O_{\P_0} \left(   n^{ - \beta /(2\beta + 1)} (\log n)^{ (2\delta \beta + 1/2) / ( 2\beta +  1) }   \right) .
\end{equation*}
\end{proposition}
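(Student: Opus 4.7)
The proposition has three pieces---uniform frequentist coverage, sharp control of the posterior mass, and the $\ell_2$-diameter---and the plan is to treat them in turn by separating the two constraints defining $\tilde{C}_n$. The first constraint is the weak-BvM ball $C_n$, whose behaviour is already controlled by Theorem \ref{EB weak BvM} together with Theorem 1 of \cite{CaNi}; the second is an undersmoothing regularization in the strong $H^{\hat{\alpha}_n - \epsilon_n}$-norm whose sole purpose is to render the $\ell_2$-diameter polynomial.

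For the coverage assertion, decompose $\{f_0 \in \tilde{C}_n\} = \{f_0 \in C_n\} \cap A_n$, where $A_n = \{\snorm{f_0 - \hat{f}_n}_{H^{\hat{\alpha}_n - \epsilon_n}} \leq C\sqrt{\log n}\}$. Theorem \ref{EB weak BvM} combined with Theorem 1 of \cite{CaNi} gives $\P_0(f_0 \in C_n) \to 1-\gamma$ uniformly over $f_0 \in \mathcal{Q}_{SS}(\beta,R,\varepsilon)$ with $\beta \in [\beta_1,\beta_2]$. To control $A_n$, I invoke the $\P_0$-concentration of $\hat{\alpha}_n$ around $\beta$ for self-similar $f_0$ established in \cite{KnSzVVVZ,SzVVVZ} to ensure $\hat{\alpha}_n - \epsilon_n < \beta$ on an event of $\P_0$-probability tending to $1$. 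On this event, $f_0 \in \mathcal{Q}(\beta,R)$ gives $\snorm{f_0}_{H^{\hat{\alpha}_n - \epsilon_n}}^2 \lesssim R \sum_k k^{2(\hat{\alpha}_n - \epsilon_n) - 2\beta - 1} \lesssim R/(\beta - \hat{\alpha}_n + \epsilon_n) \lesssim R \log n$, and a direct bias--variance computation for the posterior mean using the explicit Gaussian form \eqref{eq28} yields $\snorm{\hat{f}_n - f_0}_{H^{\hat{\alpha}_n - \epsilon_n}} = O_{\P_0}(\sqrt{\log n})$. Hence $\P_0(A_n) \to 1$ and the coverage claim follows by the union bound.

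For the posterior probability statement, conditional on $Y$ and $\hat{\alpha}_n$ the posterior is the product Gaussian \eqref{eq28}, whose centered version has independent coordinates with variances $(k^{2\hat{\alpha}_n+1} + n)^{-1}$. A direct computation gives
\begin{equation*}
\E^{\Pi_{\hat{\alpha}_n}(\cdot \mid Y)}\snorm{f - \hat{f}_n}_{H^{\hat{\alpha}_n - \epsilon_n}}^2 = \sum_{k} \frac{k^{2(\hat{\alpha}_n - \epsilon_n)}}{k^{2\hat{\alpha}_n + 1} + n} \lesssim n^{-2\epsilon_n/(2\hat{\alpha}_n+1)} = O(1),
\end{equation*}
while the weak variance is $\sigma_n^2 = \sup_k k^{2(\hat{\alpha}_n - \epsilon_n)}/(k^{2\hat{\alpha}_n + 1} + n) \asymp n^{-1/(2\hat{\alpha}_n + 1)}$. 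Borell's Gaussian concentration inequality in the Hilbert space $H^{\hat{\alpha}_n - \epsilon_n}$ then bounds the posterior probability of violating the second constraint by $\exp(-c n^{1/(2\hat{\alpha}_n + 1)} \log n)$, and on the $\P_0$-high-probability event that $\hat{\alpha}_n \in [\beta_1/2, 2\beta_2]$ this dominates the stated $n^{-C'n^{1/(4\beta+2)}}$ remainder. Since $\Pi_{\hat{\alpha}_n}(C_n \mid Y) = 1 - \gamma$ by definition of $R_n$, the bound on the posterior mass of $\tilde{C}_n$ follows.

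For the $\ell_2$-diameter, take $f,g \in \tilde{C}_n$. The triangle inequality gives $\snorm{f-g}_{H(\delta)} = O_{\P_0}(n^{-1/2})$, using $R_n = O_{\P_0}(1)$ from the weak BvM, and $\snorm{f-g}_{H^{\hat{\alpha}_n - \epsilon_n}} \leq 2C\sqrt{\log n}$. Splitting the $\ell_2$-sum at frequency $N$ yields the one-line interpolation
\begin{equation*}
\norm{f - g}_2^2 \leq N(\log N)^{2\delta}\, \snorm{f-g}_{H(\delta)}^2 + N^{-2(\hat{\alpha}_n - \epsilon_n)}\, \snorm{f-g}_{H^{\hat{\alpha}_n - \epsilon_n}}^2,
\end{equation*}
and balancing the two terms via $N^{2\hat{\alpha}_n + 1}(\log N)^{2\delta} \asymp n \log n$ together with $\hat{\alpha}_n - \epsilon_n \to \beta$ gives the claimed rate. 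The delicate point here is bookkeeping of the logarithmic factors: the $\delta$-weighting of $H(\delta)$, the $\sqrt{\log n}$ from the second constraint, and the $(\log N)^{-2\delta/(2\beta+1)}$ correction to the optimized $N$ must combine to produce exactly the advertised exponent $(2\delta\beta + 1/2)/(2\beta + 1)$. The main technical obstacle throughout is that $\epsilon_n \asymp 1/\log n$ forces every step of the argument to be sharp to within logarithmic factors, and in particular requires the uniform concentration of $\hat{\alpha}_n$ around $\beta$ that was already built up in the proof of Theorem \ref{EB weak BvM} to reconcile the competing requirements that $\hat{\alpha}_n - \epsilon_n$ be strictly less than $\beta$ (for coverage of $A_n$) yet close enough to $\beta$ to yield the minimax-optimal diameter.
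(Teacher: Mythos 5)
Your three-part decomposition (coverage via the weak BvM for $C_n$ plus a high-probability bound on the second constraint; credibility via Gaussian concentration of the posterior in $H^{\hat{\alpha}_n-\epsilon_n}$; diameter via the two-norm interpolation with the cut at $N^{2\hat{\alpha}_n+1}(\log N)^{2\delta}\asymp n\log n$) is essentially the paper's proof, which packages the first step into Lemma \ref{coverage lemma} and the second into the $\chi^2$-tail bound of Lemma \ref{exponential lemma}. Your substitution of Borell's inequality for the paper's chi-squared inequality is harmless and in fact yields a better exponent ($n^{1/(2\hat{\alpha}_n+1)}$ in place of $n^{1/(4\hat{\alpha}_n+2)}$), which more than covers the stated $n^{-C'n^{1/(4\beta+2)}}$ remainder once you localize $\hat{\alpha}_n$ to $\beta+O(1/\log n)$ rather than merely to $[\beta_1/2,2\beta_2]$ (the crude interval is not enough to compare $n^{1/(2\hat{\alpha}_n+1)}$ with $n^{1/(4\beta+2)}$ when $\beta$ is near $\beta_1$, but Lemma \ref{smoothness lemma} gives the sharper localization you already use elsewhere).

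There is, however, one concrete computational error in the credibility step: the posterior second moment is \emph{not} $O(1)$. With $s=\hat{\alpha}_n-\epsilon_n$ one has
\begin{equation*}
\sum_{k}\frac{k^{2s}}{k^{2\hat{\alpha}_n+1}+n}\;\asymp\;\frac{1}{n}\sum_{k\le n^{1/(2\hat{\alpha}_n+1)}}k^{2s}+\sum_{k>n^{1/(2\hat{\alpha}_n+1)}}k^{-2\epsilon_n-1}\;\asymp\;\Big(1+\frac{1}{2\epsilon_n}\Big)\,n^{-2\epsilon_n/(2\hat{\alpha}_n+1)},
\end{equation*}
and since $\epsilon_n\asymp 1/\log n$ the tail sum contributes a factor $\epsilon_n^{-1}\asymp\log n$ that your bound drops. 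The mean of $\snorm{f-\hat f_n}^2_{H^{\hat{\alpha}_n-\epsilon_n}}$ is therefore of exact order $\log n$, which is precisely why the second constraint in \eqref{EB credible set definition} must have radius $C\sqrt{\log n}$ with $C>1/r_1$ rather than a constant radius: the concentration argument only bites once the threshold exceeds the mean, and verifying that $C^2\log n$ does so is where the condition on $C$ enters. Your Borell step still goes through after this correction (the weak variance $\sigma_n^2\asymp n^{-1/(2\hat{\alpha}_n+1)}$ is computed correctly, and $C\sqrt{\log n}-\E\snorm{f-\hat f_n}_{H^{\hat{\alpha}_n-\epsilon_n}}\gtrsim\sqrt{\log n}$ for admissible $C$), so the conclusion is unaffected, but as written the step misstates the size of the posterior spread and hides the role of $C>1/r_1$. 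The same bookkeeping point recurs in the coverage step, where your bound $\snorm{f_0}^2_{H^{\hat{\alpha}_n-\epsilon_n}}\lesssim R\log n$ needs $\beta-\hat{\alpha}_n+\epsilon_n\gtrsim 1/\log n$ on the high-probability event, i.e.\ the undersmoothing $\epsilon_n\ge r_1/\log n$ must dominate the localization error of $\hat{\alpha}_n$ from Lemma \ref{smoothness lemma}; this is the content the paper delegates to Lemma \ref{coverage lemma} and is worth making explicit.
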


The logarithmic correction in the definition of $H(\delta)$ that is required for a weak BvM causes the $(\log n)^{2\delta \beta  / ( 2\beta +  1)}$  penalty (which is $O((\log n)^{2\delta})$ uniformly over $\beta \geq 0$); this is the price required for using a plug-in approach in $H(\delta)$. The remaining $(\log n)^{1/(4\beta+2)}$ factor arises due to the second constraint in $\tilde{C}_n$, where the $H^{\hat{\alpha}_n}$-radius must be taken sufficiently large to ensure $\tilde{C}_n$ has sufficient posterior probability.

While the second constraint in \eqref{EB credible set definition} reduces the credibility below $1-\gamma$, Proposition \ref{EB credible set} shows that this credibility loss is very small. The Bayesian approach takes care of this automatically since the posterior concentrates on a much more regular set than $\ell_2$. This is corroborated empirically by numerical evidence (see Figure \ref{credibility table}), which shows that the credibility of the set $\tilde{C}_n$ rapidly approaches $1-\gamma$ as $n$ increases.

\begin{remark}\label{credible set remark}
A naive interpretation of $C_n$ yields a credible set that is far too large, having unbounded $\ell_2$-diameter, with the additional constraint in $\tilde{C}_n$ needed to regularize the set. In actual fact the posterior does this regularization automatically with $C_n$ being ``almost optimal". Proposition \ref{EB credible set} could be rewritten for $C_n$ with exact credibility $\Pi_{\hat{\alpha}_n} (C_n \mid Y) = 1-\gamma$ and $\ell_2$-diameter satisfying
\begin{equation*}
\Pi_{\hat{\alpha}_n} ( f\in C_n : \snorm{f - \hat{f}_n}_2 \leq C n^{ - \frac{\beta}{2\beta + 1}} (\log n)^{ \frac{2\delta \beta + 1/2}{ 2\beta +  1} } \mid Y ) = 1 - \gamma +O_{\P_0} \left(  n^{-C'n^{1/(4\beta+2)}} \right) ,
\end{equation*}
for some $C,C'>0$. In view of this, the sets $C_n$ and $\tilde{C}_n$ are essentially the same from the point of view of the posterior, with $C_n$ having exact credibility for finite $n$ and correct $\ell_2$-diameter asymptotically and $\tilde{C}_n$ having the reverse. In particular, the finite time credibility ``gap" for either having too large radius in $C_n$ or smaller than $1-\gamma$ credibility for $\tilde{C}_n$ is of the same size. Moreover, the above statement holds without the need for a self-similarity assumption, which is possible since the confidence set does not strictly have optimal diameter. The same notion also holds for $C_n$ arising from the hierarchical Bayes procedure.
\end{remark}

\begin{remark}\label{efficient estimator remark}
By Lemma \ref{l2 efficient estimator} the empirical Bayes posterior mean $\hat{f}_n$ satisfies $\snorm{\hat{f}_n - \mathbb{Y}}_{H(\delta)} = o_{\P_0}(1/\sqrt{n})$ and so is an efficient estimator of $f_0$ in $H(\delta)$. Consequently one can substitute $\mathbb{Y}$ with $\hat{f}_n$ in the definitions of $C_n$ and $\tilde{C}_n$.
\end{remark}

Replacing the estimate $\hat{\alpha}_n$ with the median $\alpha_n^M$ of the marginal posterior distribution $\lambda_n (\cdot | Y)$ yields a fully Bayesian analogue. To obtain the necessary undersmoothing over a target range $[\beta_{1},\beta_{2}]$, we consider the shifted estimator $\hat{\beta}_n = \alpha_n^M - (C+1) / \log n$, where $C = C(R,\beta_{2},\varepsilon,\rho) = \max_{\beta_1 \leq  \beta \leq \beta_{2} } C(R,\beta,\varepsilon,\rho)$ is the constant appearing in Lemma \ref{posterior median lemma} (which can be explicitly computed). Consider
\begin{equation}
\tilde{C}_n ' = \left\{  f :   \snorm{f - \mathbb{Y}}_H \leq R_n / \sqrt{n} , \quad  \snorm{f - \hat{f}_n}_{H^{\hat{\beta}_n}} \leq M_n \sqrt{\log n}  \right\} ,
\label{credible set definition 2}
\end{equation}
where $\hat{f}_n$ is the posterior mean, $M_n \rightarrow \infty$ grows more slowly than any polynomial and $R_n$ is chosen as in $C_n$. Taking $C_n$ arising from the hierarchical Bayesian procedure $\Pi^H$, $\tilde{C}_n ' $ is a ``fully Bayesian" object. We have an analogue of Proposition \ref{EB credible set}.

\begin{proposition}\label{HB credible set}
Let $0 < \beta_{1} \leq \beta_{2} < \infty$, $R \geq 1$ and $\varepsilon > 0$. Then the confidence set $\tilde{C}_n'$ given in \eqref{credible set definition 2} satisfies
\begin{equation*}
\sup_{\substack{f_0 \in \mathcal{Q}_{SS} (\beta,R,\varepsilon) \\\ \beta \in [\beta_{1},\beta_{2}] }} \left| \P_{f_0} (f_0 \in \tilde{C}_n') - (1-\gamma) \right| \rightarrow 0
\end{equation*}
as $n \rightarrow \infty$. For every $\beta \in [\beta_{1}, \beta_{2}]$, uniformly over $f_0 \in \mathcal{Q}_{SS}(\beta,R,\varepsilon)$,
\begin{equation*}
\Pi^H (\tilde{C}_n' \mid Y) = 1 - \gamma  + o_{\P_0} \left( 1 \right) ,
\end{equation*}
while the $\ell_2$-diameter satisfies for $\delta > 2$,
\begin{equation*}
|\tilde{C}_n' |_2 = O_{\P_0} \left(   n^{ - \beta /(2\beta + 1)} (\log n)^{ (2\delta \beta + 1/2) / ( 2\beta +  1) }   \right) .
\end{equation*}
\end{proposition}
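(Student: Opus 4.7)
The plan is to mirror the proof of Proposition \ref{EB credible set}, substituting the shifted posterior median $\hat{\beta}_n = \alpha_n^M - (C+1)/\log n$ for the empirical Bayes estimator $\hat{\alpha}_n$, and invoking the hierarchical weak BvM (Theorem \ref{HB weak BvM}) in place of Theorem \ref{EB weak BvM}. The three assertions---frequentist coverage, posterior credibility, and $\ell_2$-diameter---will be handled in turn, with the lemma \ref{posterior median lemma} on the right tail of $\lambda_n(\cdot \mid Y)$ playing the central role.

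\emph{Coverage.} Since $\tilde{C}_n' \subset C_n = \{f : \snorm{f-\mathbb{Y}}_H \leq R_n/\sqrt{n}\}$, Theorem \ref{HB weak BvM} together with the weak-BvM $\Rightarrow$ coverage argument of \cite{CaNi} gives $\P_{f_0}(f_0 \in C_n) \to 1-\gamma$ and $R_n = O_{\P_0}(1)$, uniformly over $f_0 \in \mathcal{Q}_{SS}(\beta,R,\varepsilon)$, $\beta \in [\beta_1,\beta_2]$. The upper bound on the coverage is immediate. For the lower bound, it suffices to show that $f_0$ satisfies the second constraint with $\P_0$-probability tending to one. Lemma \ref{posterior median lemma} yields $\alpha_n^M \leq \beta + C/\log n$ with high $\P_0$-probability, so $\hat{\beta}_n \leq \beta - 1/\log n$. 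Splitting frequencies at $N \asymp n^{1/(2\beta+1)}$ and using $f_0 \in \mathcal{Q}(\beta,R)$ together with the adaptive $\ell_2$-contraction of the posterior mean from \cite{KnSzVVVZ},
\begin{equation*}
\snorm{f_0 - \hat{f}_n}_{H^{\hat{\beta}_n}}^2 \;\leq\; N^{2\hat{\beta}_n}\snorm{f_0 - \hat{f}_n}_2^2 \;+\; \sum_{k > N} k^{2(\hat{\beta}_n - \beta) - 1}\,R.
\end{equation*}
Since $\beta - \hat{\beta}_n \geq 1/\log n$ and $N^{2(\hat{\beta}_n-\beta)} = O(1)$, the tail sum is $O(\log n)$, while $N^{2\hat{\beta}_n}\snorm{f_0-\hat{f}_n}_2^2 = O_{\P_0}(\log n)$ by the contraction rate; thus the left-hand side lies below $M_n \sqrt{\log n}$ for any $M_n \to \infty$.

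\emph{Credibility.} Decompose $\Pi^H(\cdot \mid Y) = \int \Pi_\alpha(\cdot \mid Y)\,\lambda_n(\alpha \mid Y)\,d\alpha$. By Lemma \ref{posterior median lemma} the marginal $\lambda_n(\cdot\mid Y)$ concentrates in an interval of width $o(1/\log n)$ around $\alpha_n^M$, so with $\P_0$-probability $1-o(1)$ every $\alpha$ in its effective support satisfies $\alpha \geq \hat{\beta}_n + 1/\log n$. For such $\alpha$, the conditional Gaussian posterior \eqref{eq28} has coordinate variances $(k^{2\alpha+1} + n)^{-1}$, and a direct computation together with the $\ell_2$-contraction of the conditional mean around $\hat{f}_n$ gives $\E^{\Pi_\alpha}[\snorm{f-\hat{f}_n}_{H^{\hat{\beta}_n}}^2 \mid Y] = O_{\P_0}(\log n)$ uniformly over this support. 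Markov's inequality then yields $\Pi_\alpha(\snorm{f-\hat{f}_n}_{H^{\hat{\beta}_n}} \leq M_n\sqrt{\log n} \mid Y) = 1 - o_{\P_0}(1)$ uniformly, and integrating against $\lambda_n(\cdot\mid Y)$ combined with $\Pi^H(C_n \mid Y) = 1-\gamma$ gives $\Pi^H(\tilde{C}_n'\mid Y) = 1 - \gamma + o_{\P_0}(1)$.

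\emph{Diameter.} For $g = f_1 - f_2$ with $f_1,f_2 \in \tilde{C}_n'$, a dyadic split at $K$ yields
\begin{equation*}
\snorm{g}_2^2 \;\leq\; K(\log K)^{2\delta}\snorm{g}_{H(\delta)}^2 + K^{-2\hat{\beta}_n}\snorm{g}_{H^{\hat{\beta}_n}}^2 \;\leq\; 4 K(\log K)^{2\delta}\frac{R_n^2}{n} + 4 K^{-2\hat{\beta}_n} M_n^2 \log n,
\end{equation*}
optimised at $K \asymp n^{1/(2\hat{\beta}_n+1)}$ up to logarithmic factors. Since $\hat{\beta}_n = \beta + o_{\P_0}(1)$, substituting back and tracking the powers of $\log n$ delivers $|\tilde{C}_n'|_2 = O_{\P_0}(n^{-\beta/(2\beta+1)}(\log n)^{(2\delta\beta+1/2)/(2\beta+1)})$. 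Uniformity in $f_0 \in \mathcal{Q}_{SS}(\beta,R,\varepsilon)$ follows from the uniformity already present in the estimates above.

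The main obstacle is the credibility step: unlike the empirical Bayes case, where $\hat{\alpha}_n$ is deterministic given the data and acts as a pointwise plug-in, here we must control a genuine mixture over $\alpha$. This requires sharp two-sided concentration of $\lambda_n(\cdot\mid Y)$ within a window narrower than $1/\log n$ around the median (so that $\hat{\beta}_n$ stays a definite margin below the effective support), together with $H^{\hat{\beta}_n}$-moment bounds for the conditional Gaussian posteriors that are uniform in $\alpha$ across this window---both supplied by (and indeed the purpose of) Lemma \ref{posterior median lemma}.
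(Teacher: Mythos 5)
Your proposal follows essentially the same route as the paper's (very brief) proof: mirror Proposition \ref{EB credible set} with $\hat{\beta}_n$ in place of $\hat{\alpha}_n$, get coverage from the weak BvM plus an analogue of Lemma \ref{coverage lemma}, get credibility from contraction of the hierarchical posterior about its mean in the $H^{\hat{\beta}_n}$-norm (arguing as in Knapik et al.), and run the same dyadic split for the diameter. Two imprecisions are worth flagging. First, you assert (and single out as the crux) that $\lambda_n(\cdot\mid Y)$ concentrates in a window of width $o(1/\log n)$ around $\alpha_n^M$; no such statement is available or needed. What Lemma \ref{posterior median lemma} and the proof of Theorem 2.5 of \cite{KnSzVVVZ} give is concentration of $\lambda_n(\cdot\mid Y)$ on $[\underline{\alpha}_n,\overline{\alpha}_n]$, an interval of width $O(1/\log n)$ around $\beta$ under self-similarity; the deterministic shift $(C+1)/\log n$ in the definition of $\hat{\beta}_n$, with $C$ the maximal constant over $[\beta_1,\beta_2]$, is precisely what guarantees that $\hat{\beta}_n$ sits a margin $\gtrsim 1/\log n$ below the whole effective support, which is all that the bound $\E^{\Pi_\alpha}[\snorm{f-\hat{f}_n}_{H^{\hat{\beta}_n}}^2\mid Y]=O(\log n)$ requires. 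Second, in the coverage step your tail bound $\sum_{k>N}k^{2(\hat{\beta}_n-\beta)-1}R$ only accounts for $f_{0,k}$ and silently drops the contribution of $\hat{f}_{n,k}$ for $k>N$; this term is controllable (the posterior mean shrinks high frequencies strongly, and the noise contribution to $\sum_{k>N}k^{2\hat{\beta}_n}\hat{f}_{n,k}^2$ is $O_{\P_0}(1)$), but it needs to be said. Neither issue changes the architecture of the argument, which matches the paper's.
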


\subsection{Adaptive credible bands in $L^\infty$}\label{credible band section}

We provide a fully Bayesian construction of adaptive credible bands using the slab and spike prior. The posterior median $\tilde{f} = (\tilde{f}_{n,lk})_{(l,k) \in \Lambda}$ (defined coordinate-wise) takes the form of a thresholding estimator (c.f. \cite{AbSaSi}), which we use to identify significant coefficients. This has the advantage of both simplicity and interpretability and also provides a natural Bayesian approach for this coefficient selection. Such an approach was used by Kueh \cite{Ku} to construct an asymptotically honest (i.e. uniform in the parameter space) adaptive frequentist confidence set on the sphere using needlets. In that article, the coefficients are selected based on the empirical wavelet coefficients with the thresholds selected conservatively using Bernstein's inequality. In contrast, we use a Bayesian approach to automatically select the thresholding quantile constants that then yields exact coverage statements.

Let
\begin{equation}
D_n = \{ f : \snorm{f - \mathbb{Y}}_{\mathcal{M}(w)} \leq R_n / \sqrt{n}  \} ,
\label{M credible set}
\end{equation}
where $R_n  = R_n(Y,\gamma)$ is chosen such that $\Pi (D_n \mid Y) = 1-\gamma$. We then define the data driven width of our confidence band
\begin{equation}
\sigma_{n,\gamma} =  \sigma_{n,\gamma} (Y) = \sup_{x \in [0,1]} \sum_{l=0}^{J_n}  v_n \sqrt{\frac{\log n }{n}} \sum_{k=0}^{2^l -1}  1_{ \{ \tilde{f}_{lk} \neq 0 \} } |\psi_{lk} (x) | ,
\label{eq18}
\end{equation}
where $(v_n)$ is any (possibly data-driven) sequence such that $v_n \rightarrow \infty$. Under a local self-similarity type condition as in Kueh \cite{Ku}, one could possibly remove the supremum in \eqref{eq18} to obtain a spatially adaptive procedure. However, we restrict attention to more global self-similarity conditions here for simplicity. Since we consider wavelets satisfying \eqref{wavelet}, we have
\begin{equation*}
\sigma_{n,\gamma} \leq  v_n \sqrt{\frac{\log n }{n}} \sup_{x \in [0,1]} \sum_{l=0}^{J_n}  \sum_{k=0}^{2^l -1} \left| \psi_{lk}(x) \right| \leq  C(\psi)  v_n \sqrt{\frac{\log n }{n}} \sum_{l=0}^{J_n} 2^{l/2}  \leq C'  v_n \sqrt{\log n} < \infty \quad a.s.,
\end{equation*}
for all $n$ and $\gamma \in (0,1)$. Let $\pi_{med}$ denote the projection onto the non-zero coordinates of the posterior median and in a slight abuse of notation set
\begin{equation*}
\pi_{med}(Y)(x) = \sum_{l=0}^{J_n} \sum_{k = 0}^{2^l-1} Y_{lk} 1_{\{ \tilde{f}_{lk} \neq 0 \} } \psi_{lk}(x) ,
\end{equation*}
where we recall $Y_{lk} = \int_0^1 \psi_{lk}(t) dY(t)$. Consider the set
\begin{equation}
\overline{D}_n = \{ f : \snorm{f - \mathbb{Y}}_{\mathcal{M}(w)} \leq R_n / \sqrt{n} , \quad \snorm{f - \pi_{med} (Y)}_\infty \leq  \sigma_{n,\gamma}  (Y) \}   ,
\label{eq14}
\end{equation}
where $R_n$ is as in \eqref{M credible set}. This involves a two-stage procedure: we firstly calculate the required $\mathcal{M}(w)$-radius $R_n$ and then use the posterior median to select the coefficients deemed significant.

\begin{proposition}\label{credible band}
Let $0 < \beta_{1} \leq \beta_{2} < \infty$, $R \geq 1$ and $\varepsilon > 0$. Consider the slab and spike prior defined above with threshold $j_0 (n) \rightarrow \infty$ and let $(w_l)$ be any admissible sequence that satisfies $w_{j_0(n)} / \sqrt{\log n } \nearrow \infty$. Then the confidence set $\overline{D}_n$ given in \eqref{eq14}, using the choice $(w_l)$ and $\sigma_{n,\gamma}(Y)$ defined in \eqref{eq18} for $v_n \rightarrow \infty$, satisfies
\begin{equation*}
\sup_{\substack{f_0 \in \mathcal{H}_{SS} (\beta,R,\varepsilon) \\\ \beta \in [\beta_{1},\beta_{2}] }} \left| \P_{f_0} (f_0 \in \overline{D}_n ) - (1-\gamma) \right| \rightarrow 0
\end{equation*}
as $n \rightarrow \infty$. For every $\beta \in [\beta_{1}, \beta_{2}]$, uniformly over $f_0 \in \mathcal{H}_{SS}(\beta,R,\varepsilon)$,
\begin{equation*}
\Pi (\overline{D}_n \mid Y ) =  1- \gamma + o_{\P_0} (1)  ,
\end{equation*}
while the $L^\infty$-diameter satisfies
\begin{equation*}
| \overline{D}_n |_\infty =  O_{\P_0} \left(   (n / \log n)^{ -\beta/(2\beta + 1) }  v_n  \right) .
\end{equation*}
\end{proposition}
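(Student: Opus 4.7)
The approach separates the three claims (credibility, coverage, $L^\infty$-diameter), all built from Theorem \ref{thm3} applied to the $\mathcal{M}(w)$-ball $D_n$ together with an analysis of the thresholding behavior of the posterior median $\tilde{f}$ for the slab-and-spike prior. The central input is Lemma 1 of \cite{HoRoSc}: with high $\P_0$-probability the marginal posterior weight $p_{lk}$ that coordinate $(l,k)$ is nonzero is close to $1$ whenever $|f_{0,lk}| \gg \sqrt{\log n / n}$ (so $\tilde{f}_{lk} \neq 0$) and close to $0$ when $|f_{0,lk}| \ll \sqrt{\log n / n}$. Set $j_*$ by $2^{j_*} \asymp (n/\log n)^{1/(2\beta+1)}$, the oracle resolution at which the decay $2^{-l(\beta+1/2)}$ of coefficients of functions in $\mathcal{H}(\beta,R)$ crosses this thresholding level.

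For the credibility, since $\Pi(D_n \mid Y) = 1-\gamma$ by construction, it suffices to show $\Pi(\snorm{f - \pi_{med}(Y)}_\infty \leq \sigma_{n,\gamma} \mid Y) = 1 - o_{\P_0}(1)$. A posterior draw $f$ has the same zero/nonzero pattern as $\tilde{f}$ with high posterior probability by the above dichotomy of $p_{lk}$, and conditional on being nonzero the coordinate $f_{lk}$ is essentially $N(Y_{lk}, n^{-1})$-distributed. A Gaussian maximal inequality over at most $2n$ coordinates then gives $|f_{lk} - Y_{lk}| \leq c\sqrt{\log n / n}$ uniformly on $\supp(\tilde{f})$; combined with \eqref{wavelet} and $v_n \to \infty$ this yields $\snorm{f - \pi_{med}(Y)}_\infty \leq \sigma_{n,\gamma}$.

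For the coverage, Theorem \ref{thm3} together with Theorem 1 of \cite{CaNi} yields $\P_{f_0}(f_0 \in D_n) \to 1-\gamma$ uniformly, so it remains to prove $\P_{f_0}(\snorm{f_0 - \pi_{med}(Y)}_\infty \leq \sigma_{n,\gamma}) \to 1$ uniformly over $\mathcal{H}_{SS}(\beta,R,\varepsilon)$ for $\beta \in [\beta_1,\beta_2]$. I would decompose
\begin{equation*}
f_0 - \pi_{med}(Y) = \sum_{l > J_n,\, k} f_{0,lk} \psi_{lk} \; + \; \sum_{l \leq J_n,\, \tilde{f}_{lk} = 0} f_{0,lk} \psi_{lk} \; + \; \sum_{l \leq J_n,\, \tilde{f}_{lk} \neq 0} \frac{Z_{lk}}{\sqrt{n}} \psi_{lk}.
\end{equation*}
The tail term is $O(n^{-\beta})$ from $|f_{0,lk}| \leq R 2^{-l(\beta+1/2)}$ and \eqref{wavelet}, and the noise term is $O(\sqrt{\log n/n} \cdot 2^{j_*/2}) = O((n/\log n)^{-\beta/(2\beta+1)})$ by a Gaussian union bound over $\supp(\tilde{f})$. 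The middle (bias) term is the delicate piece: the thresholding description of $\tilde{f}$ partitions missed coordinates into those at levels $l > j_*$, where $|f_{0,lk}| \lesssim 2^{-l(\beta+1/2)}$ sum in $L^\infty$ to $O((n/\log n)^{-\beta/(2\beta+1)})$, and those at $l \leq j_*$, which by the lemma must satisfy $|f_{0,lk}| \lesssim \sqrt{\log n/n}$ and hence again contribute $O((n/\log n)^{-\beta/(2\beta+1)})$. The $v_n \to \infty$ factor in $\sigma_{n,\gamma}$ absorbs all three.

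For the diameter, $|\overline{D}_n|_\infty \leq 2\sigma_{n,\gamma}$; with high $\P_0$-probability $\tilde{f}$ selects no coordinate at levels $l > j_*$ (since $|f_{0,lk}| \leq R 2^{-l(\beta+1/2)} \ll \sqrt{\log n/n}$ there), so \eqref{wavelet} gives
\begin{equation*}
\sigma_{n,\gamma} \leq v_n \sqrt{\log n /n}\, \sup_x \sum_{l=0}^{j_*} \sum_k |\psi_{lk}(x)| \leq C v_n \sqrt{\log n/n} \cdot 2^{j_*/2} \asymp v_n (n/\log n)^{-\beta/(2\beta+1)},
\end{equation*}
as claimed. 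The hardest step throughout is the uniform analysis of the bias term in the coverage argument: the pointwise thresholding dichotomy of \cite{HoRoSc} must be lifted to uniform $L^\infty$-control over the nonparametric class $\mathcal{H}_{SS}(\beta,R,\varepsilon)$, and this lifting depends crucially on self-similarity (to forbid truth functions whose coefficient structure is statistically invisible to the procedure) and on careful handling of the prior weights $w_{jn}$ in combination with the automatic low-level fit on $j \leq j_0(n)$.
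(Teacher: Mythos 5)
Your overall architecture matches the paper's proof: the same decomposition of $f_0 - \pi_{med}(Y)$ into a high-frequency tail, a bias term from unselected coordinates, and a noise term on the selected coordinates; the thresholding dichotomy for the posterior median from Lemma 1 of \cite{HoRoSc}; coverage of the $\mathcal{M}(w)$-ball from the weak BvM; and the wavelet localization bound \eqref{wavelet} for the diameter. But there is a genuine gap at the single most important quantitative step. You write that ``the $v_n \to \infty$ factor in $\sigma_{n,\gamma}$ absorbs all three'' terms, which presupposes the lower bound $\sigma_{n,\gamma} \gtrsim v_n (n/\log n)^{-\beta/(2\beta+1)}$. That bound is not automatic: $\sigma_{n,\gamma}$ is a data-driven sum over the coordinates the posterior median actually selects, and a priori it could be far smaller than the bias term --- in the extreme, $\sigma_{n,\gamma} = 0$ if $\tilde{f} \equiv 0$ --- while the unselected coordinates just below the threshold can still contribute bias of order $(n/\log n)^{-\beta/(2\beta+1)}$. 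This is exactly where self-similarity must be used \emph{concretely}: Definition \ref{self condition 2} (via the argument of Proposition 3 of \cite{HoNi}) forces $\sup_{l \geq \tilde{J}_n(\beta)} |\langle f_0, \psi_{lk}\rangle| \geq d\, 2^{-\tilde J_n(\beta)(\beta+1/2)} > \overline{\gamma}'\sqrt{\log n / n}$ for a level $2^{\tilde{J}_n(\beta)} \asymp \epsilon (n/\log n)^{1/(2\beta+1)}$ with $\epsilon$ chosen small, so on the high-probability event the median selects at least one coefficient $(l',k')$ with $l' \geq \tilde{J}_n(\beta)$, whence $\sigma_{n,\gamma} \geq c(\psi) v_n 2^{\tilde{J}_n(\beta)/2}\sqrt{\log n / n} \gtrsim v_n (n/\log n)^{-\beta/(2\beta+1)}$. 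You correctly name self-similarity as the crucial hypothesis, but without this explicit lower bound the coverage argument does not close.

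The same omission undermines your credibility argument. Your claim that a posterior draw shares the zero/nonzero pattern of $\tilde{f}$ with high posterior probability fails for the ``in-between'' coordinates with $\underline{\gamma}\sqrt{\log n/n} \leq |f_{0,lk}| \leq \overline{\gamma}'\sqrt{\log n/n}$ (where the mixing weights need not be near $0$ or $1$, only on one side of $1/2$); the mismatched coordinates contribute to $\|f - \pi_{med}(Y)\|_\infty$ but not to $\sigma_{n,\gamma}$, so once again you need the lower bound on $\sigma_{n,\gamma}$ to absorb them. The paper sidesteps this by the triangle inequality through $f_0$: it bounds $\|f - f_0\|_\infty$ by the $L^\infty$ posterior contraction rate $(\log n/n)^{\beta/(2\beta+1)}$ of Theorem 3.1 of \cite{HoRoSc} and $\|f_0 - \pi_{med}(Y)\|_\infty$ by $\sigma_{n,\gamma}/2$ from the coverage step, each dominated by $\sigma_{n,\gamma}/2$ precisely because of the self-similarity lower bound. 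A smaller imprecision: your assertion that the median selects nothing above $j_*$ ``since $|f_{0,lk}| \ll \sqrt{\log n/n}$ there'' needs the level $J_n(\beta)$ to be defined with an explicit constant so that $\mathcal{J}_n(\underline{\gamma}) \subset \{l \leq J_n(\beta)\}$; at $l$ only slightly above $j_*$ the coefficients are merely comparable to the threshold, not negligible.
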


Under self-similarity, $\overline{D}_n$ has radius equal to the minimax rate in $L^\infty$ up to some factor $v_n$ that can be taken to diverge arbitrarily slowly, again mirroring a frequentist undersmoothing penalty. The choice of the posterior median is for simplicity and can be replaced by any other suitable thresholding procedure, for example directly using the posterior mixing probabilities between the atom at zero and the continuous density component.

One could also consider other alternatives to $\sigma_{n,\gamma}$ that simultaneously control the $L^\infty$-norm of the credible set whilst also preserving coverage and credibility. A similar construction to the credible sets in Section \ref{credible set section} could also be pursued by intersecting $D_n$ with a $B_{\infty1}^{\hat{\beta}_n}$-ball, where $\hat{\beta}_n$ is a suitable estimate of the smoothness. Alternatively, in view of Remark \ref{credible set remark}, one can also show that
\begin{equation*}
\Pi \left( \left. f\in D_n : \snorm{f - T_n}_\infty \leq  \left( \frac{w_{j_n(\beta)}}{\sqrt{j_n(\beta)}} n^{-\beta/(2\beta+1)} (\log n)^{(\beta+1)/(2\beta+1)} \right) \right| Y \right) = 1 - \gamma +o_{\P_0} \left(  1 \right) ,
\end{equation*}
where $T_n$ is an efficient estimator of $f_0$ in $\mathcal{M}$ that is also rate-optimal in $L^\infty$ (e.g. \eqref{efficient estimator1} or \eqref{efficient estimator2}) and $2^{j_n} \sim (n \log n)^{1/(2\beta+1)}$. The factor $ w_{j_n(\beta)}/\sqrt{j_n(\beta)}$ can be made to diverge arbitrarily slowly by the prior choice of $j_0(n)$.

\section{Posterior independence of the credible sets}\label{geometry section}

As shown above, the spaces $H(\delta) = H_2^{-1/2,\delta}$ and $\mathcal{M}(w)$ yield credible sets with good frequentist properties. However, given the different geometries proposed, it is of interest to compare them to more classical credible sets. Consider the $\ell_2$-ball studied in \cite{Fr,SzVVVZ} (though without the blow-up factor of the latter)
\begin{equation}
C_n^{\ell_2} = \{ f : \snorm{f-\hat{f}_n}_2 \leq \tilde{Q}_n (\hat{\alpha}_n,\gamma) \}  ,
\label{l2 credible set}
\end{equation}
where $\tilde{Q}_n (\hat{\alpha}_n, \gamma)$ is selected such that $\Pi_{\hat{\alpha}_n} (C_n^{\ell_2} \mid Y) = 1-\gamma$. Since the posterior variance of $\Pi_\alpha (\cdot \mid Y)$ is independent of the data, the radius $\tilde{Q}_n(\hat{\alpha}_n,\gamma)$ depends only on the data through $\hat{\alpha}_n$. By Theorem 1 of \cite{Fr} we have $\tilde{Q}_n (\alpha,\gamma) = Q_n n^{-\alpha/(2\alpha+1)}$, where $Q_n \rightarrow Q >0$.

Numerical examples of $\tilde{C}_n$ and $C_n^{\ell_2}$ are displayed in Section \ref{simulation section}. Given the similarity of $\tilde{C}_n$ and $C_n^{\ell_2}$ in Figures \ref{Fourier} and \ref{Volterra}, a natural question (voiced for example in \cite{Ca3,Ni,SzVVVZ4}) is to what extent these sets actually differ, both in theory and practice. From a purely geometric point of view these sets can be considered as infinite-dimensional ellipsoids with differing orientations. From a Bayesian perspective, an intriguing question is to what degree the decision rules on which these credible sets are based differ with respect to the posterior. For simplicity, we centre $\tilde{C}_n$ at the posterior mean $\hat{f}_n$, which we can do by Remark \ref{efficient estimator remark}.

\begin{theorem}\label{l2 asymptotic independence}
Suppose $a_n$ in \eqref{likelihood} satisfies in addition $a_n \leq \log n/(6\log n \log n)$. Then the $(1-\gamma)$-$H(\delta)$-credible ball $\tilde{C}_n$ defined in \eqref{EB credible set definition} and the $(1-\gamma)$-$\ell_2$-credible ball $C_n^{\ell_2}$ defined in \eqref{l2 credible set} are asymptotically independent under the empirical Bayes posterior, that is as $n \rightarrow \infty$,
\begin{equation*}
\Pi_{\hat{\alpha}_n} (\tilde{C}_n \cap C_n^{\ell_2} \mid Y) = \Pi_{\hat{\alpha}_n} (\tilde{C}_n \mid Y) \Pi_{\hat{\alpha}_n} (C_n^{\ell_2} \mid Y) + o_{\P_0} (1) = (1-\gamma)^2 + o_{\P_0}(1) 
\end{equation*}
uniformly over $f_0 \in \mathcal{Q}(\beta,R)$.
\end{theorem}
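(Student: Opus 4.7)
The plan is to exploit the product-Gaussian structure of the empirical Bayes posterior together with the observation that the norms $\|\cdot\|_{H(\delta)}$ and $\|\cdot\|_2$ concentrate their posterior fluctuations on essentially disjoint blocks of frequencies, so that the two credibility events become functions of independent blocks of coordinates. By Remark \ref{efficient estimator remark} we may centre $\tilde C_n$ at $\hat f_n$, and Proposition \ref{EB credible set} shows that the auxiliary $H^{\hat\alpha_n - \epsilon_n}$-constraint in \eqref{EB credible set definition} has posterior mass $1 - O_{\P_0}(n^{-C'n^{1/(4\beta+2)}})$ and can be absorbed into the final $o_{\P_0}(1)$ error. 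It therefore suffices to prove the claim with $\tilde C_n$ replaced by the pure $H(\delta)$-ball $\tilde C_n^0 := \{f : \snorm{f-\hat f_n}_{H(\delta)} \leq R_n/\sqrt n\}$, whose posterior credibility is $1-\gamma + o_{\P_0}(1)$. Setting $\xi := f - \hat f_n$, conditionally on $\hat\alpha_n = \alpha$ the coordinates $\xi_k$ are independent centred Gaussians with variance $\sigma_k^2 = 1/(k^{2\alpha+1}+n)$, and this centred posterior law does not depend on $Y$.

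Fix a slowly diverging cut-off, e.g.\ $K_n = \lceil(\log n)^2\rceil$, and split
\begin{equation*}
\snorm{\xi}_{H(\delta)}^2 = U_n + \tilde U_n, \qquad \snorm{\xi}_2^2 = \tilde V_n + V_n,
\end{equation*}
with $U_n, \tilde V_n$ summing over $k \leq K_n$ and $\tilde U_n, V_n$ over $k > K_n$. The two index blocks being disjoint, $U_n$ is posterior-independent of $V_n$. Using $\sigma_k^2 \leq 1/n$ and the convergence of $\sum_k k^{-1}(\log k)^{-2\delta}$ (valid since $\delta > 1/2$), we get $\E^\Pi \tilde U_n = o(1/n)$ and $\mathrm{Var}(\tilde U_n) = o(1/n^2)$, so $\tilde U_n$ is negligible on the natural $1/n$-scale of $\snorm{\xi}_{H(\delta)}^2$. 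Symmetrically, $\E^\Pi \tilde V_n \leq K_n/n$, which is of strictly smaller order than the fluctuation scale $\sqrt{\mathrm{Var}(\snorm{\xi}_2^2)} \asymp n^{-(4\alpha+1)/(4\alpha+2)}$ since $K_n = o(n^{1/(4\alpha+2)})$ uniformly for $\alpha$ in any compact neighbourhood of $\beta$.

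A sandwich argument then reduces $\Pi_{\hat\alpha_n}(\tilde C_n^0 \cap C_n^{\ell_2}\mid Y)$ to $\Pi_{\hat\alpha_n}(\{U_n \leq r_n^2\} \cap \{V_n \leq s_n^2\}\mid Y)$ up to perturbations of the thresholds $r_n^2 := R_n^2/n$ and $s_n^2 := \tilde Q_n^2$ by quantities of strictly smaller order than their natural fluctuation scales. The independence of $U_n$ and $V_n$ makes the resulting joint probability factor, and applying the same sandwich to each factor separately gives
\begin{equation*}
\Pi_{\hat\alpha_n}(\tilde C_n^0 \cap C_n^{\ell_2} \mid Y) = \Pi_{\hat\alpha_n}(\tilde C_n^0 \mid Y)\,\Pi_{\hat\alpha_n}(C_n^{\ell_2}\mid Y) + o_{\P_0}(1) = (1-\gamma)^2 + o_{\P_0}(1),
\end{equation*}
which is the claim. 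Uniformity over $f_0 \in \mathcal{Q}(\beta,R)$ follows from the uniform concentration of $\hat\alpha_n$ near $\beta$ established in \cite{KnSzVVVZ}.

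The main obstacle is the quantitative anti-concentration step: one must verify that perturbing the thresholds $r_n^2$ and $s_n^2$ on a scale strictly below their natural fluctuations shifts the corresponding $(1-\gamma)$-posterior quantiles by only $o(1)$. For $V_n$, a quantitative central limit theorem for a sum of independent $\chi_1^2$'s with $k^* - K_n \to \infty$ comparable summands yields a density at the $(1-\gamma)$-quantile of order $1/\sqrt{\mathrm{Var}(V_n)}$, which matches the perturbation scale; for $U_n$, an analogous bound follows from its effective dimension $K_n \to \infty$ and the smoothness of a nondegenerate linear combination of independent $\chi_1^2$ variables. Securing both estimates uniformly over $\hat\alpha_n$ in a shrinking neighbourhood of $\beta$ is the essential technical work.
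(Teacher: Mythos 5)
Your argument is in substance the same as the paper's: centre at the posterior mean, drop the auxiliary $H^{\hat{\alpha}_n-\epsilon_n}$-constraint, exploit the coordinatewise independence of the centred posterior $\Pi_\alpha(\cdot\mid Y)$ by splitting the frequencies at a cut-off so that the $H(\delta)$-ball event is carried by the low block and the $\ell_2$-ball event by the high block, and show the cross-contributions are negligible. On one point you are actually more careful than the paper: you correctly identify that the threshold perturbations must be compared with the \emph{fluctuation} scales of the two quadratic forms (of order $1/n$ for $\snorm{\xi}_{H(\delta)}^2$ and $n^{-(4\alpha+1)/(4\alpha+2)}$ for $\snorm{\xi}_2^2$), not merely with their means, and that an anti-concentration/CLT step is needed to convert a threshold shift of smaller order into an $o(1)$ change in probability; the paper passes over this step by simply re-invoking its exponential inequalities.

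The genuine gap is in the uniformity step. The theorem is claimed uniformly over $f_0\in\mathcal{Q}(\beta,R)$ \emph{without} self-similarity, whereas the concentration $|\hat{\alpha}_n-\beta|=O_{\P_0}(1/\log n)$ (Lemma \ref{smoothness lemma}) is only available on $\mathcal{Q}_{SS}(\beta,R,\varepsilon)$; for a generic $f_0\in\mathcal{Q}(\beta,R)$ (e.g.\ $f_0=0$) the estimator can drift towards the upper endpoint $a_n$ of the search interval in \eqref{likelihood}, so you cannot restrict to $\alpha$ in a compact neighbourhood of $\beta$. This is not cosmetic for your fixed cut-off $K_n=\lceil(\log n)^2\rceil$: the requirement $K_n=o(n^{1/(4\alpha+2)})$ fails for $\alpha$ of order $a_n$ when, say, $a_n\sim\log n/\log\log\log n$ (still $o(\log n)$), since then $n^{1/(4a_n+2)}$ grows more slowly than any power of $\log n$, and the low-frequency contribution to $\snorm{\xi}_2^2$ is no longer negligible on the fluctuation scale. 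The remedy, which is what the paper does, is to run the fixed-$\alpha$ argument uniformly over the entire deterministic range $\alpha\in[0,a_n]$ of $\hat{\alpha}_n$ and choose the cut-off as a function of $a_n$ (any $k_n\to\infty$ with $k_n=o(n^{1/(4a_n+2)})$ works, such a choice existing because $a_n=o(\log n)$); the good event and all negligibility bounds then depend only on $k_n$ and not on $\alpha$, so no localisation of $\hat{\alpha}_n$ is needed. With that modification your proof goes through.
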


The first equality above also holds with $\tilde{C}_n$ replaced by $C_n$ or $C_n^{\ell_2}$ replaced by the blown-up $\ell_2$-credible ball studied in \cite{SzVVVZ}. Moreover, the above statement also holds for the hierarchical Bayes posterior with $\tilde{C}_n$ replaced by the $(1-\gamma)$-$H(\delta)$-credible ball $\tilde{C}_n'$ given in \eqref{credible set definition 2} and $C_n^{\ell_2}$ replaced by the corresponding hierarchical Bayes $\ell_2$-credible set.

Theorem \ref{l2 asymptotic independence} says that the Bayesian decision rules leading to the construction of $\tilde{C}_n$ and $C_n^{\ell_2}$ are fundamentally unrelated - one contains asymptotically no information about the other. Although we can conclude that $\tilde{C}_n$ and (blown-up) $C_n^{\ell_2}$ are frequentist confidence sets with similar properties, they express completely different aspects of the posterior. Note that this is not simply an artefact of the prior choice since the equivalent prior credible sets are not independent under the prior despite its product structure. An alternative interpretation is to consider Bayesian tests based on the credible regions, which have optimal frequentist properties. In this context, the two tests screen different and unrelated features. While both of these approaches are valid, both for the frequentist and the Bayesian, Theorem \ref{l2 asymptotic independence} says that neither of these constructions can be reduced to the other.

The $H(\delta)$-credible sets are principally determined by the low frequencies ($k\leq k_n$, where $k_n \rightarrow \infty$ in the proof), whereas the $\ell_2$-credible sets are driven by the high frequencies ($k > k_n$). The product structure of the posterior asymptotically decouples these two regimes yielding the independence statement. In particular the $H(\delta)$-norm down-weights the higher order frequencies enough that one is dealing with a close to finite-dimensional model. Such a result is unlikely to hold for arbitrary priors, unless there is some degree of posterior independence between the frequency ranges driving the different credible sets (though less independence than a full product posterior is necessary). The numerical simulations in Figure \ref{credibility table} corroborate Theorem \ref{l2 asymptotic independence} very closely, indicating that this result provides a good finite sample approximation to the posterior behaviour.

The posterior draws plotted in Section \ref{simulation section} are approximately drawn from the posterior distribution conditioned to the respective credible sets. Corollary \ref{l2 asymptotic independence corollary} quantifies how close these draws are in terms of the total variation distance $\snorm{\cdot}_{TV}$.

\begin{corollary}\label{l2 asymptotic independence corollary}
Let $\Pi_{\hat{\alpha}_n}^{\tilde{C}_n} (\cdot \mid Y)$, $\Pi_{\hat{\alpha}_n}^{C_n^{\ell_2}} (\cdot \mid Y)$ denote the posterior distribution conditioned to the sets $\tilde{C}_n$, $C_n^{\ell_2}$ respectively. Then as $n \rightarrow \infty$,
\begin{equation*}
\snorm{\Pi_{\hat{\alpha}_n}^{\tilde{C}_n} (\cdot \mid Y) - \Pi_{\hat{\alpha}_n}^{C_n^{\ell_2}} (\cdot \mid Y) }_{TV} = \gamma  + o_{\P_0} (1)  .
\end{equation*}
\end{corollary}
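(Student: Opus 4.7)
The plan is to reduce the statement to a short deterministic identity for the total variation distance between two conditional laws sharing a common base measure, and then to plug in the asymptotic values of the relevant probabilities supplied by Theorem \ref{l2 asymptotic independence}. Write $A = \tilde{C}_n$, $B = C_n^{\ell_2}$, and abbreviate the empirical Bayes posterior by $\Pi = \Pi_{\hat{\alpha}_n}(\cdot\mid Y)$. Set $a = \Pi(A)$, $b = \Pi(B)$ and $c = \Pi(A \cap B)$. By the very definition of $C_n^{\ell_2}$ we have $b = 1-\gamma$ exactly. Theorem \ref{l2 asymptotic independence} gives $c = (1-\gamma)^2 + o_{\P_0}(1)$, and (using the product identity in that theorem with $b$ known) it also yields $a = 1-\gamma + o_{\P_0}(1)$.

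Next I would write out the TV distance via Radon--Nikodym derivatives with respect to $\Pi$: since $d\mu_A/d\Pi = a^{-1} \mathbf{1}_A$ and $d\mu_B/d\Pi = b^{-1} \mathbf{1}_B$, splitting the total mass of $|d\mu_A - d\mu_B|$ over the three disjoint regions $A \cap B$, $A \setminus B$, $B \setminus A$ gives the deterministic identity
\begin{equation*}
\snorm{\mu_A - \mu_B}_{TV} \;=\; \frac{1}{2}\left( \left| \frac{1}{a} - \frac{1}{b} \right| c \;+\; \frac{a - c}{a} \;+\; \frac{b - c}{b} \right).
\end{equation*}
Substituting the asymptotics $a, b \to 1-\gamma$ and $c \to (1-\gamma)^2$ in $\P_0$-probability, the first term is $o_{\P_0}(1)$ (since $|1/a - 1/b| = o_{\P_0}(1)$ and $c = O(1)$), while each of the other two converges in $\P_0$-probability to $1 - (1-\gamma) = \gamma$. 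The continuous mapping theorem, applied to the joint convergence of $(a,b,c)$ on the open set where $a,b > 0$, then delivers $\snorm{\mu_A - \mu_B}_{TV} = \gamma + o_{\P_0}(1)$, uniformly over $f_0 \in \mathcal{Q}(\beta,R)$ exactly as in Theorem \ref{l2 asymptotic independence}.

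There is no real obstacle here: once Theorem \ref{l2 asymptotic independence} is granted, the entire content is a clean two-line algebraic reduction combined with the elementary Radon--Nikodym computation above. The only small care needed is to note that the exact credibility $\Pi(C_n^{\ell_2}\mid Y) = 1-\gamma$ for $B$ lets us cleanly extract $a = 1-\gamma + o_{\P_0}(1)$ from the product form in Theorem \ref{l2 asymptotic independence} (this is why the dominant contribution $\gamma$ matches exactly, rather than being replaced by some nearby quantity).
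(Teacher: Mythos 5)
Your proof is correct and takes essentially the same route as the paper: decompose the total variation mass over $A\cap B$, $A\setminus B$ and $B\setminus A$ and plug in the asymptotic independence from Theorem \ref{l2 asymptotic independence}. The only (minor) difference is that you retain the cross term $|a^{-1}-b^{-1}|\,c$ to account for $\Pi_{\hat\alpha_n}(\tilde C_n\mid Y)$ being only asymptotically equal to $1-\gamma$, whereas the paper treats both normalizations as exactly $1-\gamma$ so that the two conditional laws coincide on the intersection; your version is the slightly more careful bookkeeping of the same argument.
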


\begin{proof}
Each conditional distribution consists of the posterior distribution restricted to the relevant credible set and normalized by the same factor $(1-\gamma)$. The two distributions are therefore identical on their intersection and so the total variation distance equals
\begin{equation*}
\frac{1}{2} \left(  \frac{\Pi_{\hat{\alpha}_n} (\tilde{C}_n \cap (C_n^{\ell_2})^c \mid Y)}{\Pi_{\hat{\alpha}_n} (\tilde{C}_n \mid Y)} + \frac{\Pi_{\hat{\alpha}_n} (\tilde{C}_n^c \cap C_n^{\ell_2} \mid Y) }{\Pi_{\hat{\alpha}_n} ( C_n^{\ell_2} \mid Y)}  \right) = \frac{1}{2} \left(  \frac{2\gamma(1-\gamma) + o_{\P_0}(1)}{1-\gamma}  \right) = \gamma + o_{\P_0}(1) .
\end{equation*}
\end{proof}

Turning to the $L^\infty$-setting, for mathematical convenience let us consider the slightly stronger Besov norm $\|f \|_{B_{\infty1}^0} = \sum_{l} 2^{l/2} \max_k |\langle f, \psi_{lk}\rangle|$ as in Hoffmann et al. \cite{HoRoSc}. This norm is closely related to the $\|\cdot \|_\infty$-norm via the Besov space embbedings $B_{\infty1}^0 \subset L^\infty \subset B_{\infty\infty}^0$ (Chapter 4.3 of \cite{GiNi3}). Define
\begin{equation}
D_n^{L^\infty} = \{ f : \snorm{f - T_n}_{B_{\infty1}^0} \leq \overline{Q}_n (\gamma)  \}  ,
\label{L_infty credible set}
\end{equation}
where $T_n=T_n^{(2)}$ is given by \eqref{efficient estimator2} and is an efficient estimator of $f_0$ in $\mathcal{M}$ that is also rate-optimal in $L^\infty$ and $\overline{Q}_n(\gamma)$ is selected such that $\Pi (D_n^{L^\infty} \mid Y) = 1-\gamma$. The choice of $T_n$ is not essential, but it is convenient to select an estimator that can simultaneously act as the centering for both $D_n$ and $D_n^{L^\infty}$. We take the density $g$ in $\Pi$ to be Gaussian to simplify certain computations. Analogous results to those in $H(\delta)$ then hold.

\begin{theorem}\label{L_infty asymptotic independence}
Consider the slab and spike prior $\Pi$ with lower threshold $j_0 (n) \rightarrow \infty$ and let $g$ be the density of the Gaussian distribution $N(0,\tau^2)$. Let $(w_l)$ be any admissible sequence satisfying $w_{j_0(n)} / \sqrt{\log n} \nearrow \infty$ as $n\rightarrow \infty$. Then the $(1-\gamma)$-$\mathcal{M}(w)$-credible ball $\overline{D}_n$ defined in \eqref{eq14} and the $(1-\gamma)$-$L^\infty$-credible ball $D_n^{L^\infty}$ defined in \eqref{L_infty credible set} are asymptotically independent under the posterior, that is as $n \rightarrow \infty$,
\begin{equation*}
\Pi (\overline{D}_n \cap D_n^{L^\infty} \mid Y) = \Pi (\overline{D}_n \mid Y) \Pi (D_n^{L^\infty} \mid Y) + o_{\P_0}(1) = (1-\gamma)^2 + o_{\P_0}(1) 
\end{equation*}
 uniformly over $f_0 \in \mathcal{H}(\beta,R)$.
\end{theorem}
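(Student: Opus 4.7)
The plan is to mirror the strategy of Theorem \ref{l2 asymptotic independence}: exploit the fact that the slab--and--spike posterior is a product measure over the wavelet coordinates $(f_{lk})_{(l,k) \in \Lambda}$ given $Y$, and show that the two credible balls are, modulo vanishing error, determined by \emph{disjoint} blocks of these coordinates. The asymptotic factorization then follows from conditional independence. Concretely, choose a cutoff level $j_n^\ast$ with $j_0(n) \ll j_n^\ast \ll \log n$ (e.g.\ $j_n^\ast = j_0(n)^{1+\eta}$ for small $\eta>0$), set $\mathcal{L} = \{(l,k): l \leq j_n^\ast\}$ and $\mathcal{H} = \{(l,k) : l > j_n^\ast\}$, and let $\pi_\mathcal{L}, \pi_\mathcal{H}$ denote the corresponding wavelet projections. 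The claim is that $\overline{D}_n$ is asymptotically a constraint on the $\mathcal{L}$-coordinates and $D_n^{L^\infty}$ on the $\mathcal{H}$-coordinates.

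For the first half, I would show that under the posterior $\|\pi_\mathcal{H}(f-\mathbb{Y})\|_{\mathcal{M}(w)} = o_{\P_0}(1/\sqrt{n})$. At levels $l > j_n^\ast$ the spike-and-slab posterior concentrates overwhelmingly on $\{f_{lk}=0\}$, and on the continuous part the coordinates stay within $O(1/\sqrt{n})$ of $\mathbb{Y}_{lk}$; together with $|\mathbb{Y}_{lk}| \leq |f_{0,lk}| + |Z_{lk}|/\sqrt{n} = O(2^{-l(\beta+1/2)} + \sqrt{\log n/n})$ uniformly in $k$ (by a Gaussian maximal inequality on the $2^l$ noise variables), the weighting $w_l^{-1}$ together with admissibility and the hypothesis $w_{j_0(n)}/\sqrt{\log n}\nearrow\infty$ forces this tail to be negligible relative to the radius $R_n/\sqrt{n}$, which is itself $O_{\P_0}(1/\sqrt{n})$ by the weak BvM (Corollary \ref{multiscale BvM example}). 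For the second half, choose the efficient centering $T_n$ so that on $\mathcal{L}$ it coincides with $\mathbb{Y}$ (or with a low-frequency wavelet projection of $\mathbb{Y}$, which is the construction used for the two candidate $T_n$ alluded to in Section \ref{geometry section}); then by the localization property \eqref{wavelet},
\begin{equation*}
\|\pi_\mathcal{L}(f - T_n)\|_\infty \leq \sum_{l \leq j_n^\ast} c(\psi)2^{l/2}\max_k |f_{lk} - \mathbb{Y}_{lk}| = O_{\P_0}\bigl(2^{j_n^\ast/2}/\sqrt{n}\bigr),
\end{equation*}
which for $j_n^\ast = o(\log n)$ is of strictly smaller order than the minimax rate $(n/\log n)^{-\beta/(2\beta+1)}$ governing $\overline{Q}_n(\gamma)$. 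Thus $\overline{D}_n$ and $D_n^{L^\infty}$ are within $o_{\P_0}(1)$ posterior mass of the sets $\overline{D}_n^{\mathcal{L}} = \{\|\pi_\mathcal{L}(f-\mathbb{Y})\|_{\mathcal{M}(w)} \leq R_n/\sqrt{n}\}$ and $D_n^{\mathcal{H}} = \{\|\pi_\mathcal{H}(f-T_n)\|_\infty \leq \overline{Q}_n(\gamma)\}$ respectively; since these depend on disjoint blocks of coordinates and the posterior is a product measure, $\Pi(\overline{D}_n^{\mathcal{L}}\cap D_n^{\mathcal{H}} \mid Y) = \Pi(\overline{D}_n^{\mathcal{L}} \mid Y)\,\Pi(D_n^{\mathcal{H}} \mid Y)$, and the factorization in the theorem follows after substituting $\Pi(\overline{D}_n \mid Y) = 1-\gamma + o_{\P_0}(1)$ from Proposition \ref{credible band} and $\Pi(D_n^{L^\infty} \mid Y) = 1-\gamma$ by construction.

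The main obstacle is the uniform control in the first step: the slab-and-spike posterior is a non-Gaussian mixture, so one needs a tail bound for the conditional distribution of each $f_{lk}$ that is tight enough, uniformly over the $O(n)$ high-frequency coordinates, to show that $w_l^{-1}\max_k|f_{lk}-\mathbb{Y}_{lk}|$ does not accidentally exceed $\varepsilon/\sqrt n$ at \emph{any} level $l \in (j_n^\ast, J_n]$. This demands an event on which $\max_{(l,k) \in \mathcal{H}}|Z_{lk}|$ is controlled (a routine Gaussian maximal bound under $\P_0$), combined with an upper bound on the posterior tails of the mixture that is exponential in $n$ times $(f_{lk}-\mathbb{Y}_{lk})^2$, and a union bound over the at most $n$ high-frequency indices. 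A secondary difficulty is that $R_n$ and $\overline{Q}_n$ are data-dependent, so the replacement of $\overline{D}_n$ by $\overline{D}_n^{\mathcal{L}}$ (and likewise for $D_n^{L^\infty}$) must be made uniform over the relevant range of radii, which is handled by monotonicity in the radius and the fact that $R_n = O_{\P_0}(1)$.
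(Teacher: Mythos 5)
Your overall strategy is the paper's own: split the wavelet coordinates at a cutoff level, show that (after discarding the second constraint in \eqref{eq14}, which has posterior mass $1-o_{\P_0}(1)$) the $\mathcal{M}(w)$-ball is asymptotically a constraint on the low block and the $L^\infty$-ball a constraint on the high block, and then invoke the coordinatewise product structure of the spike-and-slab posterior. The ingredients you list for killing the high-frequency part of the $\mathcal{M}(w)$-norm --- the thresholding event of Theorem 3.1 of \cite{HoRoSc}, a Gaussian maximal inequality, the weighting $w_l^{-1}$ with $w_{j_0(n)} \gg \sqrt{\log n}$, and a centering $T_n$ agreeing with $\mathbb{Y}$ at low frequencies --- are exactly those used in the paper (via the event $A_n$ of \eqref{eq9} and Lemma \ref{efficient estimator lemma}), so the "main obstacle" you flag is already resolved by results you cite.

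There is, however, one step that fails as written. In bounding the low-frequency $L^\infty$-contribution you claim
\begin{equation*}
\snorm{\pi_{j_n^*}(f - T_n)}_\infty \leq \sum_{l \leq j_n^*} c(\psi)\, 2^{l/2}\max_k |f_{lk} - \mathbb{Y}_{lk}| = O_{\P_0}\bigl(2^{j_n^*/2}/\sqrt{n}\bigr),
\end{equation*}
but the only uniform control available on $\max_k|f_{lk}-\mathbb{Y}_{lk}|$ comes from the $\mathcal{M}(w)$-ball, which gives $\max_k|f_{lk}-\mathbb{Y}_{lk}| \leq w_l R_n/\sqrt{n}$, not $R_n/\sqrt{n}$; the correct bound is $O_{\P_0}(w_{j_n^*}2^{j_n^*/2}/\sqrt{n})$. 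This is where your cutoff choice $j_n^* = j_0(n)^{1+\eta} \gg j_0(n)$ bites: the hypotheses control only $w_{j_0(n)}$ (namely $w_{j_0(n)} = o(n^v)$ for every $v>0$), and admissibility imposes no upper bound on the growth of $w_l$ beyond level $j_0(n)$, so $w_{j_n^*}2^{j_n^*/2}/\sqrt{n}$ need not be $o((\log n/n)^{\beta/(2\beta+1)})$, and the $L^\infty$-ball would then not reduce to a high-block constraint. The repair is simply to take the cutoff at $j_0(n)$ itself --- the requirement $j_n^* \gg j_0(n)$ is never used in your argument, since coordinates are already thresholded at all levels $l > j_0(n)$. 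With that choice, $w_{j_0(n)}^2 2^{j_0(n)} = o(n^\epsilon)$ for every $\epsilon>0$ (using $j_0(n)=o(\log n)$), hence $w_{j_0(n)}2^{j_0(n)/2}/\sqrt{n} = o_{\P_0}((\log n/n)^{\beta/(2\beta+1)})$ as required; this is exactly the choice $j_n = j_0(n)$ made in the paper's proof.
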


In particular the choice of $j_0(n)$ in Corollary \ref{multiscale BvM example} satisfies the conditions of Theorem \ref{L_infty asymptotic independence} since then $w_{j_0(n)} \simeq u_n \sqrt{\log n}$, where $u_n$ can be made to diverge arbitrarily slowly.

\begin{corollary}
Consider the same conditions as in Theorem \ref{L_infty asymptotic independence} and let $\Pi^{\overline{D}_n} (\cdot \mid Y)$, $\Pi^{D_n^{L^\infty}} (\cdot \mid Y)$ denote the posterior distribution conditioned to the sets $\overline{D}_n$, $D_n^{L^\infty}$ respectively. Then as $n \rightarrow \infty$,
\begin{equation*}
\snorm{\Pi^{\overline{D}_n} (\cdot \mid Y) - \Pi^{D_n^{L^\infty}} (\cdot \mid Y) }_{TV} = \gamma  + o_{\P_0} (1)  .
\end{equation*}
\end{corollary}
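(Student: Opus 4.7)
The plan is to mirror the proof of the preceding corollary, with the $\ell_2$/$H(\delta)$ objects replaced by their $L^\infty$/$\mathcal{M}(w)$ analogues, and to use Theorem \ref{L_infty asymptotic independence} as the sole non-trivial input. The key structural observation is that $\Pi^{\overline{D}_n}(\cdot\mid Y)$ and $\Pi^{D_n^{L^\infty}}(\cdot\mid Y)$ are each obtained from a common reference measure (the full posterior $\Pi(\cdot\mid Y)$) by restricting to a set of posterior mass $1-\gamma$ and renormalizing by the same factor $1-\gamma$. Consequently their densities with respect to $\Pi(\cdot\mid Y)$ agree on $\overline{D}_n\cap D_n^{L^\infty}$, so that the variational distance reduces to a symmetric-difference computation.

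Concretely, I would first write
\begin{equation*}
\snorm{\Pi^{\overline{D}_n}(\cdot\mid Y)-\Pi^{D_n^{L^\infty}}(\cdot\mid Y)}_{TV}
= \frac{1}{2}\left(\frac{\Pi(\overline{D}_n\cap (D_n^{L^\infty})^c\mid Y)}{\Pi(\overline{D}_n\mid Y)} + \frac{\Pi(\overline{D}_n^c\cap D_n^{L^\infty}\mid Y)}{\Pi(D_n^{L^\infty}\mid Y)}\right).
\end{equation*}
Both denominators equal $1-\gamma$ by construction of $R_n$ and $\overline{Q}_n(\gamma)$. For the numerators, I would invoke Theorem \ref{L_infty asymptotic independence}, which yields $\Pi(\overline{D}_n\cap D_n^{L^\infty}\mid Y) = (1-\gamma)^2 + o_{\P_0}(1)$. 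Subtracting this from $\Pi(\overline{D}_n\mid Y) = 1-\gamma$ and from $\Pi(D_n^{L^\infty}\mid Y) = 1-\gamma$ respectively gives
\begin{equation*}
\Pi(\overline{D}_n\cap (D_n^{L^\infty})^c\mid Y) = \gamma(1-\gamma) + o_{\P_0}(1), \qquad \Pi(\overline{D}_n^c\cap D_n^{L^\infty}\mid Y) = \gamma(1-\gamma) + o_{\P_0}(1).
\end{equation*}

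Plugging back in gives $\frac{1}{2}(2\gamma(1-\gamma)/(1-\gamma)) + o_{\P_0}(1) = \gamma + o_{\P_0}(1)$, which is the claim. There is no real obstacle here beyond bookkeeping; all the analytic work sits inside Theorem \ref{L_infty asymptotic independence}, and this corollary is a purely algebraic consequence of asymptotic independence together with the defining property $\Pi(\overline{D}_n\mid Y) = \Pi(D_n^{L^\infty}\mid Y) = 1-\gamma$. The only minor care needed is to ensure the $o_{\P_0}(1)$ error propagates cleanly through the division by $1-\gamma$, which it does since $\gamma$ is a fixed constant in $(0,1)$.
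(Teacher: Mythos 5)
Your proposal is correct and is essentially identical to the paper's own argument (which it gives explicitly only for the $\ell_2$ analogue): reduce the total variation distance to the two symmetric-difference terms divided by the normalizing masses, and feed in the asymptotic independence from Theorem \ref{L_infty asymptotic independence}. The only cosmetic imprecision is that $\Pi(\overline{D}_n\mid Y)$ equals $1-\gamma$ only up to $o_{\P_0}(1)$ (by Proposition \ref{credible band}, since $\overline{D}_n$ carries the extra $L^\infty$ constraint), but this propagates harmlessly through the division exactly as you note, and the paper's own proof makes the same simplification.
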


\subsection*{Heuristics for an extension to density estimation}

The proofs of Theorems \ref{l2 asymptotic independence} and \ref{L_infty asymptotic independence} presented here rely on the independence of the coordinates in the Gaussian white noise model. This model can be viewed as an idealized version of other more concrete statistical models, being mathematically more tractable. In view of the extension of the nonparametric BvM to density estimation in \cite{CaNi2}, let us briefly discuss a heuristic of what we might expect in this setting.

Suppose we observe $Y_1,...,Y_n$ i.i.d. observations from an unknown density $f_0$ on $[0,1]$. Assume that $f_0$ is uniformly bounded away from 0 and that $f_0\in C^\beta([0,1])$, where $1/2<\beta\leq 1$. Consider the simple histogram prior $\Pi$ specified in \eqref{Dirichlet prior}, where we again ignore adaptation issues and select $L = L_n \rightarrow \infty$ based on the smoothness of $f_0$. Such a prior has been shown to contract optimally in $L^\infty$ by Castillo \cite{Ca} and to satisfy a weak BvM in $\mathcal{M}_0$ by Castillo and Nickl \cite{CaNi2}.

Let $(\psi_{lk})$ denote the Haar wavelet basis on $[0,1]$ and $\mathcal{M}$ the related multiscale space for a suitable admissible sequence $(w_l)$. Further let $\pi_A$ denote the projection onto the elements of the Haar wavelet basis with resolution level contained in $A$. One can show that for suitable sequences $j_{n,1}, j_{n,2} \rightarrow \infty$ satisfying $2^{j_{n,1}} \ll 2^{j_{n,2}}$,
\begin{equation*}
\begin{split}
& \Pi  ( f: \snorm{f - T_n}_\mathcal{M} \leq R_n / \sqrt{n} , \quad  \snorm{f - T_n}_\infty \leq  \bar{Q}_n  \mid  Y )  
\\
& = \Pi (f: \| \pi_{\leq j_{n,1}} (f-T_n)\|_\mathcal{M} \leq (R_n + o(1))/\sqrt{n}, \| \pi_{\geq j_{n,2}}(f-T_n)\|_\infty \leq \bar{Q}_n + \delta_n | Y) + o_{\P_0}(1),
\end{split}
\end{equation*}
where $T_n$ is a suitable centering, $R_n/\sqrt{n}$ and $\bar{Q}_n$ are the $(1-\gamma)$-quantiles of the respective credible sets and $\delta_n$ is selected small enough to only change the credibility of the latter set by $o_{\P_0}(1)$.

Using the conjugacy of the Dirichlet distribution with multinomial sampling, the posterior distribution for $(h_k)$ is $\mathcal{D}(N_1+1,...,N_L+1)$, where $N_k = |\{ Y_i : Y_i \in I_{Lk} \}|$. Observe that the law of $f_{lk} = \langle f , \psi_{lk} \rangle$ under the posterior depends principally on the observations falling within $\supp (\psi_{lk}) = [k2^{-l},(k+1)2^{-l}]$. Unlike the Gaussian white noise model, there is dependence across the posterior wavelet coefficients due to the dependence within the Dirichlet distribution and the constraint that the number of observations sums to $n$.

The $\|\cdot \|_\mathcal{M}$-norm in the above display is the weighted maximum of $\{ f_{lk}: l \leq j_{n,1}\}$. If $j_{n,1}$ does not grow too fast, this consists of relatively few ``large sample" averages. Heuristically, this term behaves like a central order statistic, being driven by the average sample behaviour. On the contrary, the $\|\cdot\|_\infty$-term is determined by the largest coefficients at each resolution level $l \geq j_{n,2}$. Since $2^{j_{n,1}} \ll 2^{j_{n,2}}$, these can be seen to behave more like extreme order statistics, being the maximum of many almost independent ``small samples" (at least relative to the frequencies $l\leq j_{n,1}$). Even though order statistics depend, by definition, on all observations, central and extreme order statistics asymptotically depend on the observations in orthogonal ways and become stochastically independent (c.f. Chapter 21 of \cite{VV}). One might therefore hope that the two norms in the previous display are asymptotically independent in the sense of Theorems \ref{l2 asymptotic independence} and \ref{L_infty asymptotic independence}.

An alternative way to understand why the wavelet coefficients at a given resolution level may be considered ``almost independent" under the posterior is via Poissonization. It is well-known that density estimation is asymptotically equivalent to Poisson intensity estimation \cite{LoZh,RaSc}, where one observes a Poisson process with intensity measure $nf_0$. Equivalently, the Poisson experiment corresponds to observing a Poisson random variable $N$ with expectation $n$ and then independently of $N$ observing $Y_1,...,Y_N$ i.i.d. with density $f_0$. In this framework, the dependence induced by the number of observations summing to $n$ is removed, meaning that the variables $(N_1,...,N_L)$ defined above are fully independent. The remaining dependence is due to the Dirichlet distribution and becomes negligible as the number of bins $L_n \rightarrow \infty$. Since this equivalence is asymptotic in nature, one should expect such a heuristic to manifest itself also asymptotically.

\section{Simulation example}\label{simulation section}

We now apply our approach in a numerical example. Following on from the example of the $\mathcal{M}(w)$-based credible set \eqref{Dirichlet credible set}, we now consider the space $H_2^{-1/2,\delta}$. Consider the Fourier sine basis
\begin{equation*}
e_k(x) = \sqrt{2} \sin (k \pi x) , \quad \quad \quad k=1,2,...,
\end{equation*}
and define the true function $f_{0,k} = \langle f_0 , e_k \rangle_2 = k^{-3/2} \sin(k)$ so that the true smoothness is $\beta = 1$. We consider realisations of the data \eqref{model} at levels $n = 500$ and $2000$ and use the empirical Bayes posterior distribution. We plotted the true $f_0$ (black), the posterior mean (red) and an approximation to the credible sets (grey). To simulate the $\ell_2$ credible balls $C_n^{\ell_2}$ given in \eqref{l2 credible set}, we sampled 2000 curves from the posterior distribution and kept the 95\% closest in the $\ell_2$ sense to the posterior mean and plotted them (grey). We performed the same approach to obtain the full $H(\delta)$-credible set $C_n$ given in \eqref{eq29} and then plotted the full adaptive confidence set $\tilde{C}_n$ given in \eqref{EB credible set definition} with $C=1$ and $\epsilon_n = 1/\log n$. We also present the approximate credibility of $\tilde{C}_n$ by considering the fraction of the simulated curves from the posterior that satisfy the extra constraint of $\tilde{C}_n$ that $\snorm{f - \hat{f}_n}_{H^{\hat{\alpha}_n - \epsilon_n}} \leq \sqrt{\log n}$. This is given in Figure \ref{Fourier}.

While the true $\ell_2$ and $H(\delta)$ credible balls are unbounded in $L^\infty$, the posterior draws can be shown to be bounded in $L^\infty$ explaining the boundedness of the plots. Sampling from the posterior (and thereby implicitly intersecting the sets $C_n^{\ell_2}$ and $\tilde{C}_n$ with the posterior support) seems the natural approach for the Bayesian. Indeed those elements that constitute the ``roughest" or least regular elements of the credible sets are not seen by the posterior, that is they have little or no posterior mass (see Lemma \ref{exponential lemma}). The posterior contains significantly more information than merely the $\ell_2$ or $H(\delta)$ norm of the parameter of interest, as can be seen by it assigning mass 1 to a strict subset of $\ell_2$. For further discussion on plotting such credible sets see \cite{Ca3,LoMa,SzVVVZ4}.

\begin{figure}[h]
\centering
\includegraphics[scale=0.245]{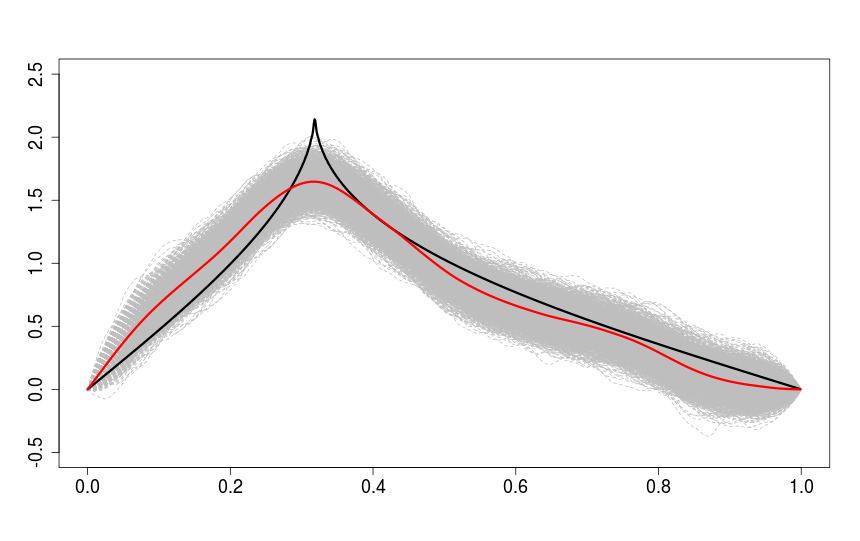}
\includegraphics[scale=0.245]{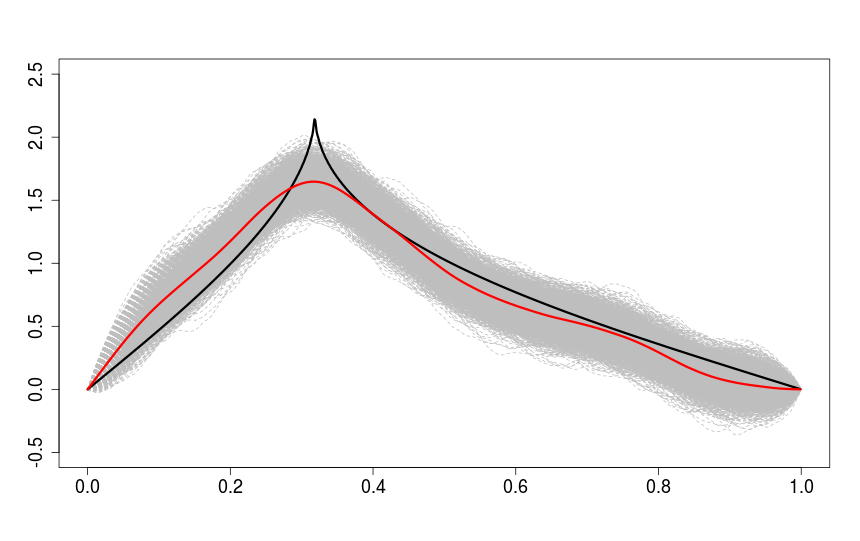}
\includegraphics[scale=0.25]{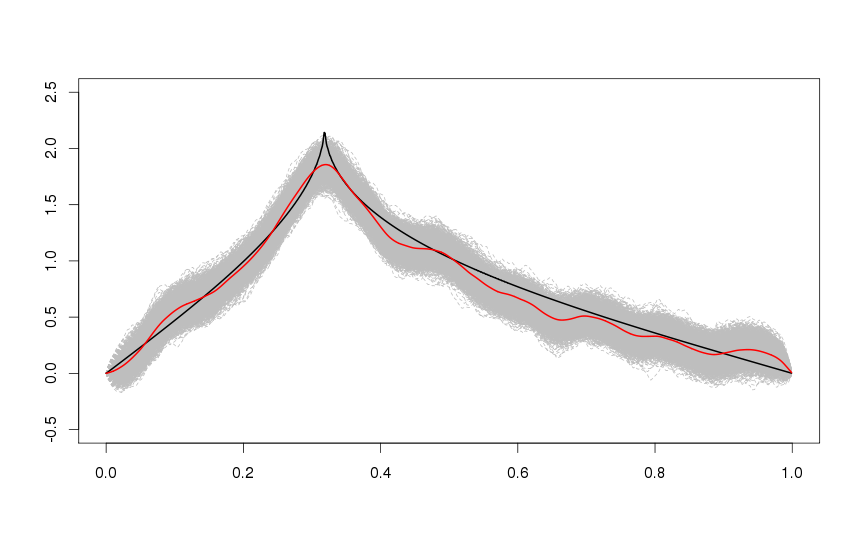}
\includegraphics[scale=0.25]{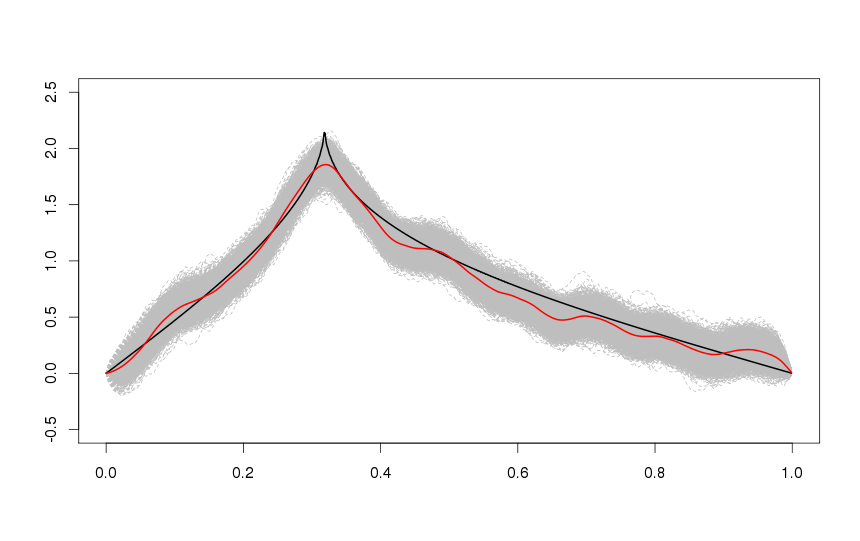}

\caption{\emph{Empirical Bayes credible sets for the Fourier sine basis with the true curve (black) and the empirical Bayes posterior mean (red). The left panels contain the $\ell_2$ credible ball $C_n^{\ell_2}$ given in \eqref{l2 credible set} and the right panels contain the set $\tilde{C}_n$ given in \eqref{EB credible set definition}. From top to bottom, $n = 500, 2000$ and $\hat{\alpha}_n = 1.29,1.01$, with the right-hand side each having credibility 95\%.}}
\label{Fourier}
\end{figure}

For a given set of 2000 posterior draws, we also computed the credibility of $\tilde{C}_n$ at a chosen significance level and the credibility of the posterior draws falling in both $\tilde{C}_n$ and $C_n^{\ell_2}$. This latter quantity has value $(1-\gamma)^2 + o_{\P_0} (1)$ by Theorem \ref{l2 asymptotic independence}. We repeated this 20 times and the average values are presented in Figure \ref{credibility table}.

\begin{figure}[h]
\centering
\begin{tabular}{| l | c | c | c | c | c | }
\hline
& \multicolumn{4}{|c|}{n=500}  \\
\hline
Chosen significance & 0.95 & 0.90 & 0.85 & 0.80  \\
Credibility of $\tilde{C}_n$ & 0.9500 & 0.8999 & 0.8499 & 0.8000   \\
Credibility of $\tilde{C}_n \cap C_n^{\ell_2}$ & 0.9020 & 0.8102 & 0.7220 & 0.6406   \\
Expected credibility of $\tilde{C}_n \cap C_n^{\ell_2}$ & 0.9025 & 0.8100 & 0.7225 & 0.6400 \\
\hline
& \multicolumn{4}{|c|}{n=2000}  \\
\hline
Chosen significance & 0.95 & 0.90 & 0.85 & 0.80  \\
Credibility of $\tilde{C}_n$ & 0.9500 & 0.9000 & 0.8500 & 0.8000   \\
Credibility of $\tilde{C}_n \cap C_n^{\ell_2}$ & 0.9025 & 0.8095 & 0.7226 & 0.6409   \\
Expected credibility of $\tilde{C}_n \cap C_n^{\ell_2}$ & 0.9025 & 0.8100 & 0.7225 & 0.6400 \\
\hline
\end{tabular}

\caption{\emph{Table showing the average credibility of $\tilde{C}_n$, the average credibility of the posterior draws falling in both sets and the expected value of the latter (from Theorem \ref{l2 asymptotic independence}).}}
\label{credibility table}
\end{figure}

The posterior distribution appears to have some difficulty visually capturing the resulting function at its peak. In fact the credible sets do ``cover the true function", but do so in an $\ell_2$ rather than an $L^\infty$-sense. Indeed, any $\ell_2$-type confidence ball will be unresponsive to highly localized pointwise features since they occur on a set of small Lebesgue measure (as in this case). Similar reasoning also explains the performance of the posterior mean at this point. The posterior mean estimates the Fourier coefficients of $f_0$ and hence estimates the true function in an $\ell_2$-sense via its Fourier series.

In Section \ref{geometry section} it was shown that the two approaches behave very differently theoretically and the numerical results in Figure \ref{credibility table} match this theory very closely. It appears that the two methods do indeed use different rejection criteria in practice resulting in different selection outcomes. The visual similarity between the $\ell_2$ and $H (\delta)$-credible balls in Figure \ref{Fourier} is therefore a result of the posterior draws themselves looking similar, rather than the methods performing identically.

We note that already by $n = 500$, $\tilde{C}_n$ has the correct credibility so that the high frequency smoothness constraint is satisfied with posterior probability virtually equal to 1 (c.f. Proposition \ref{EB credible set}). $\tilde{C}_n$ is therefore an actual credible set for reasonable (finite) sample sizes rather than a purely asymptotic credible set. The posterior distribution already strongly regularizes the high frequencies so that the posterior draws are very regular with high probability. This can be quantitatively seen by the rapidly decaying variance term of the posterior distribution \eqref{eq28}. This is indeed the case in the simulation, where the credibility gap is negligible, thereby demonstrating that most of the posterior draws already satisfy the smoothness constraint in $\tilde{C}_n$.

We repeat the same simulation using the same true function $f_{0,k} = k^{-3/2} \sin (k)$, but with basis equal to the singular value decomposition (SVD) of the Volterra operator (c.f. \cite{KnVVVZ}):
\begin{equation*}
e_k (x) = \sqrt{2} \cos ( (k-1/2)\pi x) , \quad \quad \quad k=1,2,...
\end{equation*}
and plot this in Figure \ref{Volterra} for $n = 1000$.and plot this in Figure \ref{Volterra} for $n = 1000$. Unlike Figure \ref{Fourier}, the resulting function has no ``spike" and so both credible sets have no trouble visually capturing the true function (though one should remember that these are $\ell_2$ rather than $L^\infty$ type credible sets).

\begin{figure}[h]
\centering
\includegraphics[scale=0.25]{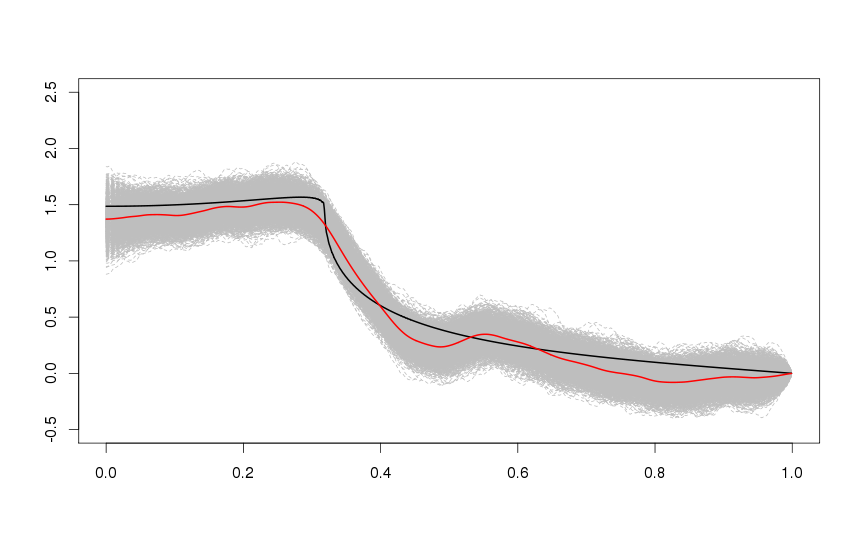}
\includegraphics[scale=0.25]{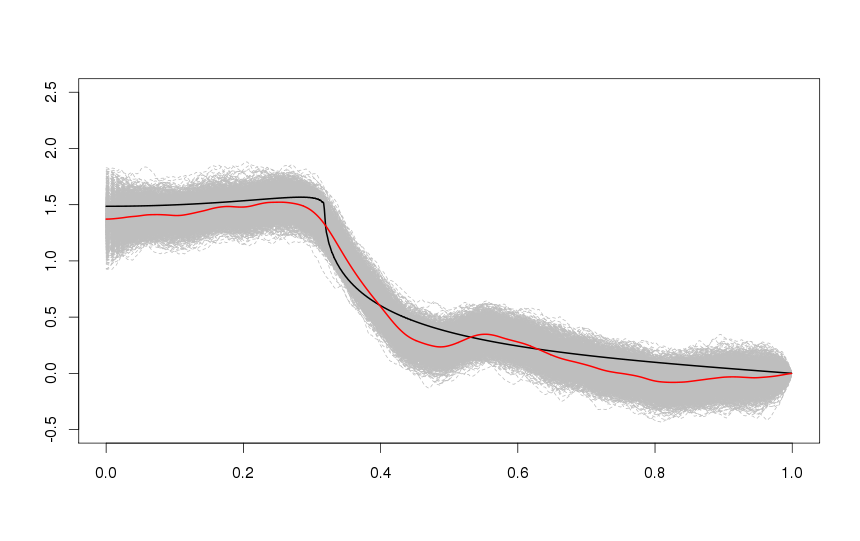}

\caption{\emph{Empirical Bayes credible sets for the Volterra SVD basis with the true curve (black) and the empirical Bayes posterior mean (red) for $n = 1000$ and $\hat{\alpha}_n = 1.07$. The left and right panels contain the $\ell_2$ credible ball $C_n^{\ell_2}$ given in \eqref{l2 credible set} and $\tilde{C}_n$ (credibility 95\%) given in \eqref{EB credible set definition} respectively.}}
\label{Volterra}
\end{figure}

We now illustrate the multiscale approach using the slab and spike prior with lower threshold $j_0(n) = \sqrt{\log n}$, plotting the true function (solid black) and posterior mean (red) at levels $n=200,500$. We have used Haar wavelets, set $g$ to be $N(0,1/2)$ and have taken prior weights $w_{j,n} = \min (n^{-1},2^{-5.5j})$, corresponding to $K=1$ and $\theta = 5$. For $n=200$ and $500$, we have fitted one scaling function plus $2^8-1 = 255$ and $2^9-1=511$ wavelet coefficients respectively (i.e. $2^{J_n+1}-1$). We again sampled 2000 curves from the posterior distribution and plotted the 95\% closest to the posterior mean in the $\mathcal{M}(w)$ sense (grey) to simulate $D_n$ in \eqref{M credible set}. We also used the posterior draws to generate a 95\% credible band in $L^\infty$ by estimating $\bar{Q}_n(0.05)$ and then plotting $D_n^{L^\infty}$ in \eqref{L_infty credible set} (dashed black). Finally we computed local 95\% credible intervals at every point $x \in [0,1]$ and joined these to form a credible band (dashed blue). This is given in Figure \ref{slab and spike picture}.

\begin{figure}[h]
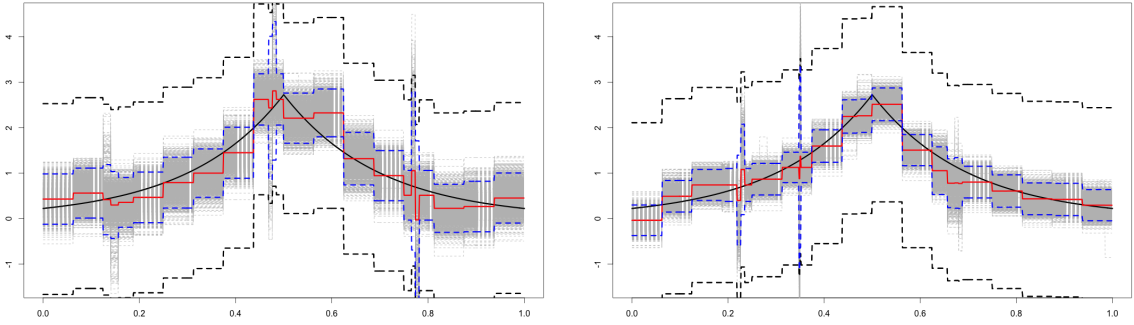

\includegraphics[scale=0.2]{spikeslab-n=200}
\includegraphics[scale=0.2]{spikeslab-n=500}

\caption{\emph{Slab and spike credible sets with the true curve (black), posterior mean (red), a 95\% credible band in $L^\infty$ (dashed black), pointwise 95\% credible intervals (dashed blue) and the set $D_n$ given in \eqref{M credible set} (grey). We have $n=200,500$ respectively.}}
\label{slab and spike picture}
\end{figure}

We see from Figure \ref{slab and spike picture} that each posterior draw consists of a rough approximation of the signal via frequencies $j \leq j_0(n)$ with a few ``spikes" from the high frequencies; the rather unusual shape is a reflection of the prior choice. It is worth noting that the posterior draws are bounded in $L^\infty$ since the posterior contracts rate optimally to the truth in $L^\infty$ \cite{HoRoSc}. We see that the $L^\infty$ diameter of $D_n$ is strictly greater than that of the $L^\infty$-credible bands, though this only manifests itself in a few places. The size of the $L^\infty$-bands is driven by the size of the spikes, which are few in a number but occur in every posterior draw, resulting in seemingly very wide credible bands.

On the contrary, the local credible intervals ignore the spikes since less than 5\% of the draws have a spike at any given point, resulting in much tighter bands. The dashed blue lines in effect correspond to the 95\% $L^\infty$-band from a prior fitting exclusively the low frequencies $j \leq j_0(n)$, which is a non-adaptive prior modelling analytic smoothness. This dramatically oversmoothes the truth resulting in far too narrow credible bands and is highly dangerous since it is known that oversmoothing the truth can yield zero coverage \cite{KnVVVZ,Lea}.

\section{Proofs}

In what follows denote by $\pi_j$  the projection onto either $V_j = \text{span} \{ e_k : 1 \leq k \leq j\}$ or $V_j = \text{span} \{ \psi_{lk} : 0 \leq l \leq j , k = 0,...,2^l - 1 \}$ depending on whether we are considering a Fourier-type basis or a wavelet basis. Similarly define $\pi_{>j}$ to be the projection onto $\text{span} \{ e_k :  k > j\}$ or $V_j = \text{span} \{ \psi_{lk} : l> j , k = 0,...,2^l - 1 \}$.

\subsection{Proofs of weak BvM results in $\ell_2$ (Theorems \ref{EB weak BvM} and \ref{HB weak BvM})}

To prove a weak BvM we need to show that the posterior contracts at rate $1/\sqrt{n}$ to the truth in the relevant space and that the finite-dimensional projections of the rescaled posterior converge weakly to those of the normal law $\mathcal{N}$ (see Theorem 8 of \cite{CaNi} for more discussion). The latter condition is implied by a classical parametric BvM in total variation. Recall that $\hat{\alpha}_n$ is the maximum marginal likelihood estimator defined in \eqref{likelihood}.

\begin{theorem}\label{thm1}
For every $\beta , R>0$ and $M_n \rightarrow \infty$, we have
\begin{equation*}
\sup_{f_0 \in \mathcal{Q} (\beta,R) } \E_0 \Pi_{\hat{\alpha}_n} (  f : \norm{ f - f_0}_S \geq M_n L_n  n^{-1/2}  | Y ) \rightarrow 0,
\end{equation*}
where $S = H(\delta)$ or $H^{-s}$ for $s > 1/2$. If $S = H(\delta)$ then $L_n = (\log n)^{3/2} (\log \log n)^{1/2}$; if in addition $f_0 \in \mathcal{Q}_{SS} (\beta,R,\varepsilon)$, then the rate improves to $L_n = 1$ for $\delta \geq 2$. If $S = H^{-s}$ for $s > 1/2$, then $L_n = 1$.
\end{theorem}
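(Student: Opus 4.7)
The plan is to exploit the explicit conjugate Gaussian structure of the posterior: conditionally on $\hat\alpha_n$, the coordinates $f_k$ are independent with
\[
f_k \mid Y, \hat\alpha_n \sim N\!\left( \frac{n Y_k}{k^{2\hat\alpha_n+1}+n},\, \frac{1}{k^{2\hat\alpha_n+1}+n} \right).
\]
Writing the target norm in weighted sequence form as $\snorm{f}_S^2 = \sum_k w_k^2 f_k^2$, with $w_k^2 = k^{-2s}$ for $S=H^{-s}$ and $w_k^2 = k^{-1}(\log k)^{-2\delta}$ for $S=H(\delta)$, Markov's inequality reduces the statement to
\[
\E_0 \int \snorm{f-f_0}_S^2 \, d\Pi_{\hat\alpha_n}(f \mid Y) \le C L_n^2/n,
\]
after which the diverging factor $M_n$ absorbs the constant.

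I would first restrict attention to a high-probability event $A_n = \{\hat\alpha_n \in [\alpha_n^-,\alpha_n^+]\}$ determined by the known concentration results for the marginal MLE established in \cite{KnSzVVVZ,SzVVVZ}. Without self-similarity the best available statements place $\hat\alpha_n$ only in a slowly shrinking window around an oracle value possibly displaced from $\beta$; under self-similarity with $\beta \in [\beta_1,\beta_2]$, the window is considerably tighter and centered at $\beta$. On $A_n$, the integral in the display splits as
\[
\sum_k \frac{w_k^2}{k^{2\hat\alpha_n+1}+n} + \sum_k w_k^2 \Bigl(\frac{n Y_k}{k^{2\hat\alpha_n+1}+n} - f_{0,k}\Bigr)^2,
\]
and, using $Y_k = f_{0,k} + Z_k/\sqrt n$ under $\P_0$, the second sum further decomposes into a deterministic squared bias and a centered Gaussian noise term.

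Each of the three pieces—posterior variance, squared bias, and noise variance—would be handled uniformly over $\alpha \in [\alpha_n^-,\alpha_n^+]$ by splitting the sum at the oracle cutoff $k_\alpha^\ast = n^{1/(2\alpha+1)}$. For $k \le k_\alpha^\ast$ I use $(k^{2\alpha+1}+n)^{-1} \le n^{-1}$ together with the (marginal) summability of $(w_k^2)$; for $k > k_\alpha^\ast$ I use $(k^{2\alpha+1}+n)^{-1} \le k^{-2\alpha-1}$ and compare the tail to a convergent integral. The hyper-rectangle decay $|f_{0,k}| \le \sqrt{R}\, k^{-\beta-1/2}$ supplies the necessary decay for the bias term, and $\E_0 Z_k^2 = 1$ together with Fubini handles the noise term. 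The $H^{-s}$ case is then immediate: $\sum_k k^{-2s}<\infty$ is absolutely summable, so the parametric rate $L_n=1$ follows uniformly in $\alpha \ge 0$ with no sensitivity to the precise location of $\hat\alpha_n$, which is why that statement is stated without any self-similarity assumption.

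The main obstacle is the $H(\delta)$ case without self-similarity: the weight $k^{-1}(\log k)^{-2\delta}$ is only marginally summable, and the cutoff analysis produces factors of order $(\log k_\alpha^\ast)^{1-2\delta} \asymp (\log n)^{1-2\delta}(2\alpha+1)^{2\delta-1}$ that depend nontrivially on $\alpha$. Tracking how these factors fluctuate as $\alpha$ sweeps over the concentration window for $\hat\alpha_n$—and having to pass $\hat\alpha_n$ through the sum via a union bound (or by monotonicity in $\alpha$ of the summands) over a grid that refines the window—is what produces the stated rate $L_n = (\log n)^{3/2}(\log\log n)^{1/2}$: the $(\log n)^{3/2}$ arises from combining the $O((\log n)^{-1})$ width of the window with the marginal convergence of the weight, and the $(\log\log n)^{1/2}$ term absorbs residual fluctuations in the grid argument. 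Under self-similarity with $\delta\ge 2$, the concentration window for $\hat\alpha_n$ is tight enough and the weight decay $(\log k)^{-2\delta}$ strong enough that these logarithmic corrections balance exactly, yielding the parametric rate $L_n=1$.
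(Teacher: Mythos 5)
Your overall architecture --- Markov's inequality on the posterior second moment, the explicit conjugate decomposition into posterior spread, squared bias and noise, and restriction to the event that $\hat\alpha_n$ lies in the concentration window from \cite{KnSzVVVZ,SzVVVZ} --- is exactly the route the paper takes (its proof is a one-line reduction to Theorem 2.3 of \cite{KnSzVVVZ}, which proceeds in precisely this way). However, there is a genuine gap in your treatment of the bias term for $S=H^{-s}$. You claim that case is ``immediate \ldots uniformly in $\alpha\ge 0$ with no sensitivity to the precise location of $\hat\alpha_n$.'' That is false for the bias. Writing the bias as $\sum_k k^{-2s}\bigl(\tfrac{k^{2\alpha+1}}{k^{2\alpha+1}+n}\bigr)^2 f_{0,k}^2$, each smoothing factor is increasing in $\alpha$, and for the extremal $f_0\in\mathcal{Q}(\beta,R)$ with $f_{0,k}^2=Rk^{-2\beta-1}$ the tail $k> n^{1/(2\alpha+1)}$ alone contributes a quantity of order $n^{-(2s+2\beta)/(2\alpha+1)}$, which exceeds $C/n$ as soon as $\alpha> s+\beta-1/2$. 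Since $\hat\alpha_n$ ranges over $[0,a_n]$ with $a_n\to\infty$, a bound that is uniform over the whole range of $\alpha$ cannot give the parametric rate; summability of $k^{-2s}$ only disposes of the variance and noise terms.

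What saves the $H^{-s}$ statement is the coupling between $\hat\alpha_n$ and $f_0$: on the high-probability event $\hat\alpha_n\le\overline\alpha_n$, the defining inequality $h_n(\alpha)\le L(\log n)^2$ for $\alpha\le\overline\alpha_n$ (Section \ref{other paper section}) caps the amount of signal beyond frequency $n^{1/(2\hat\alpha_n+1)}$, and the weight $k^{-2s}$ with $s>1/2$ then converts the resulting $\ell_2$-bias bound into an $O(1/n)$ bound in $H^{-s}$. This step --- bounding the bias via $h_n(\overline\alpha_n)$ rather than via the worst case over $\mathcal{Q}(\beta,R)$ at a fixed $\alpha$ --- is the essential ``suitable modification'' of Theorem 2.3 of \cite{KnSzVVVZ} and is needed for $H^{-s}$ just as much as for $H(\delta)$; self-similarity is only required in the $H(\delta)$ case to upgrade the one-sided bound $\hat\alpha_n\le\overline\alpha_n$ to the two-sided localization $|\hat\alpha_n-\beta|=O(1/\log n)$ of Lemma \ref{smoothness lemma}. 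Relatedly, your account of the factor $(\log n)^{3/2}(\log\log n)^{1/2}$ as arising from a grid/union-bound over the window is not the actual mechanism: the logarithms enter through the thresholds $l$ and $L(\log n)^2$ in the definitions of $\underline\alpha_n$ and $\overline\alpha_n$ and through evaluating the $H(\delta)$ weights at the cutoff $n^{1/(2\alpha+1)}$ with $\alpha$ as large as $\sqrt{\log n}$; no gridding of $\alpha$ is required since the variance and bias are deterministic in $\alpha$ and the noise term is handled by a single supremum over the window.
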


\begin{proof}
This contraction result is proved in the same manner as Theorem 2 in \cite{KnSzVVVZ}, with suitable modifications for the different norms used. In the case $S = H(\delta)$, self-similarity is needed to obtain a sharp upper bound on the behaviour of $\hat{\alpha}_n$, which is required to bound the posterior bias.
\end{proof}

\begin{theorem}\label{EB parametric BvM theorem}
The finite dimensional projections of the empirical Bayes procedure satisfy a parametric BvM, that is for every finite dimensional subspace $V \subset \ell_2$,
\begin{equation*}
\sup_{f_0 \in \mathcal{Q}(\beta,R)} \E_0 \snorm{ \Pi_{\hat{\alpha}_n} (\cdot | Y) \circ T_\mathbb{Y}^{-1} - N_V (0,I) }_{TV} \rightarrow 0 ,
\end{equation*}
where $\pi_V$ denotes the projection onto $V$ and $T_z : f \mapsto \sqrt{n} \pi_V (f-z)$.
\end{theorem}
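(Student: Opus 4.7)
The plan is to compute the empirical Bayes posterior law of $\sqrt{n}\pi_V(f-\mathbb{Y})$ explicitly and bound its total variation distance to the standard Gaussian $N_V(0,I)$ using that $\hat\alpha_n \in [0,a_n]$ with $a_n = o(\log n)$ gives uniform control on all relevant quantities.

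First, I would reduce to a coordinate subspace: any finite dimensional $V \subset \ell_2$ can be embedded in $V_K = \mathrm{span}\{e_1,\dots,e_K\}$ for some fixed $K$, and since the map $\pi_V \pi_{V_K}$ is a contraction, it suffices to prove the statement for $V=V_K$. Because the product structure of $\Pi_\alpha$ is preserved by conditioning in \eqref{eq28}, the conditional law of $\pi_V f$ given $Y$ under $\Pi_{\hat\alpha_n}(\cdot\mid Y)$ is the product Gaussian $\bigotimes_{k=1}^K N\!\left(\frac{n}{k^{2\hat\alpha_n+1}+n}Y_k,\frac{1}{k^{2\hat\alpha_n+1}+n}\right)$. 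Consequently $\sqrt{n}\pi_V(f-\mathbb{Y})$, conditional on $Y$, has law $N(\mu_n,\Sigma_n)$ with
\begin{equation*}
\mu_{n,k} = -\frac{\sqrt{n}\,k^{2\hat\alpha_n+1}}{k^{2\hat\alpha_n+1}+n}\,Y_k,\qquad \Sigma_n = \mathrm{diag}\!\left(\frac{n}{k^{2\hat\alpha_n+1}+n}\right)_{k=1}^K.
\end{equation*}

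Second, I would bound the total variation distance between two Gaussians by the usual Pinsker + explicit KL argument, which gives an estimate of the form $\snorm{N(\mu_n,\Sigma_n)-N(0,I)}_{TV}^2 \lesssim \|\mu_n\|_2^2 + \|\Sigma_n - I\|_{F}^2$ (the constants depend only on the fixed dimension $K$). Thus it is enough to show that $\E_0\|\mu_n\|_2 \to 0$ and $\E_0\|\Sigma_n - I\|_F \to 0$ uniformly in $f_0 \in \mathcal{Q}(\beta,R)$.

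Third — and this is the crux — I would exploit the deterministic upper bound $\hat\alpha_n \leq a_n = o(\log n)$ to pin down the scale. For any $k \leq K$,
\begin{equation*}
k^{2\hat\alpha_n+1} \leq K^{2a_n+1} = \exp\!\big((2a_n+1)\log K\big) = e^{o(\log n)},
\end{equation*}
so $k^{2\hat\alpha_n+1}/\sqrt{n} \to 0$ deterministically. Combined with $\E_0 Y_k^2 \leq f_{0,k}^2 + 1/n \leq R + 1$ uniformly over $\mathcal{Q}(\beta,R)$, this yields $\E_0|\mu_{n,k}| \leq (K^{2a_n+1}/\sqrt{n})\,\E_0|Y_k| \to 0$ and $|(\Sigma_n)_{kk}-1| \leq K^{2a_n+1}/n \to 0$, both uniformly in $f_0$. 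Summing over the fixed finite number of coordinates and plugging into the total variation bound from step two completes the proof.

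The main obstacle is that $\hat\alpha_n$ is data-dependent and potentially large, so that neither the posterior mean shrinkage factor nor the posterior variance on the $k$-th coordinate is automatically close to the parametric value. This is precisely why the construction in \eqref{likelihood} truncates at $a_n = o(\log n)$: any slower growth would still suffice, but the logarithmic bound is critical so that $K^{2a_n+1}$ remains $o(n^{1/2})$, making the bias term in $\mu_n$ and the covariance perturbation vanish uniformly. Everything else is the standard observation that Gaussian posteriors on finitely many coordinates automatically satisfy the parametric BvM once the shrinkage vanishes.
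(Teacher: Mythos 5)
Your proposal is correct and follows essentially the same route as the paper's proof: reduce to a coordinate subspace, use Pinsker plus the explicit Gaussian Kullback--Leibler formula, and exploit the deterministic truncation $\hat{\alpha}_n \le a_n = o(\log n)$ so that $k^{2\hat{\alpha}_n+1} = n^{o(1)}$, together with $\E_0 Y_k^2 = O(1)$ uniformly over $\mathcal{Q}(\beta,R)$. The only cosmetic difference is that you split the KL bound into the mean-shift and covariance-perturbation terms, whereas the paper bounds the KL expression in one line.
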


\begin{proof}
Without loss of generality, let $V = \text{span} \{ e_k : 1 \leq k \leq J \}$. Using Pinsker's inequality and that $\hat{\alpha}_n \in [0,a_n]$ by the choice \eqref{likelihood},
\begin{equation*}
\begin{split}
\snorm{ \Pi_{\hat{\alpha}_n} (\cdot \mid Y) \circ T_\mathbb{Y}^{-1} - N(0,I_J) }_{TV}^2 & \leq \sup_{\alpha \in [0,a_n]}  \snorm{ \Pi_{\alpha} (\cdot \mid Y) \circ T_\mathbb{Y}^{-1} - N(0,I_J) }_{TV}^2  \\
& \leq \sup_{\alpha \in [0,a_n]} KL (\Pi_{\alpha} (\cdot \mid Y) \circ T_\mathbb{Y}^{-1} , N(0,I_J) ) ,
\end{split}
\end{equation*}
where $KL$ denotes the Kullback-Leibler divergence. Using the exact formula for the Kullback-Leibler divergence between two Gaussian measures on $\R^J$,
\begin{equation*}
\begin{split}
KL & (\Pi_{\alpha} (\cdot \mid Y) \circ T_\mathbb{Y}^{-1} , N(0,I_J) ) \\
& = \frac{1}{2} \left[  \sum_{k=1}^J \frac{n}{k^{2\alpha+1} + n} + n\sum_{k=1}^{J} \frac{k^{4\alpha+2}Y_k^2}{(k^{2\alpha+1}+n)^2} + \sum_{k=1}^J \log \left( \frac{n}{k^{2\alpha+1}+n} \right) - J \right]  \\
& \leq \frac{1}{2n}  \sum_{k=1}^J \left[ k^{2\alpha+1} + k^{4\alpha+2} Y_k^2  \right]   \\
& \lesssim \frac{J^{2\alpha+2}}{n} +  \frac{J^{4\alpha+3}}{n} \max_{1 \leq k \leq J} Y_k^2.
\end{split}
\end{equation*}
Since $\E_0 \max_{1 \leq k \leq J} Y_k^2 = O (1)$ for fixed $J$ and $\alpha \leq a_n =  o(\log n)$ by the choice of $a_n$, the result follows.
\end{proof}

\begin{proof}[Proof of Theorem \ref{EB weak BvM}]
Fix $\eta > 0$, let $S$ denote $H^{-s}$ or $H(\delta)$ as appropriate and set $\tilde{\Pi}_{\hat{\alpha}_n} = \Pi_{\hat{\alpha}_n} \circ \tau_\mathbb{Y}^{-1}$. By the triangle inequality,
\begin{equation*}
\beta_S ( \tilde{\Pi}_{\hat{\alpha}_n} , \mathcal{N} ) \leq \beta_S ( \tilde{\Pi}_{\hat{\alpha}_n} , \tilde{\Pi}_{\hat{\alpha}_n} \circ \pi_j^{-1} ) + \beta_S ( \tilde{\Pi}_{\hat{\alpha}_n} \circ \pi_j^{-1} , \mathcal{N} \circ \pi_j^{-1} ) + \beta_S ( \mathcal{N} \circ \pi_j^{-1} , \mathcal{N} ) ,
\end{equation*}
for some $j > 0$. Using the contraction result of Theorem \ref{thm1} and following the argument of Theorem 8 of \cite{CaNi}, we deduce that the $\E_0$-expectation of the first term is smaller that $\eta /3$ for sufficiently large $j$, uniformly over the relevant function class (in the case of $H(\delta)$ the result holds for all $\delta > 2$ - we recall from the proof of that theorem that if the required contraction is established in $H(\delta ')$, then the required tightness argument holds in $H(\delta)$ for any $\delta > \delta '$). A similar result holds for the third term. For the middle term, note that the total variation distance dominates the bounded Lipschitz metric. For fixed $j$, we thus have that for $n$ large enough,
\begin{equation*}
\E_0 \beta_S ( \tilde{\Pi}_{\hat{\alpha}_n} \circ \pi_j^{-1} , \mathcal{N} \circ \pi_j^{-1} ) \leq \E_0 \snorm{ \Pi_{\hat{\alpha}_n} (\cdot | Y) \circ T_\mathbb{Y}^{-1} - N_V (0,I) }_{TV} \leq \eta /3 ,
\end{equation*}
using Theorem \ref{EB parametric BvM theorem} with $V = V_j$.
\end{proof}

A similar situation holds true for the hierarchical Bayesian prior.

\begin{theorem}\label{thm2}
Suppose that the prior density $\lambda$ satisfies Condition \ref{HB density condition}. Then for every $\beta , R>0$ and $M_n \rightarrow \infty$, we have
\begin{equation*}
\sup_{f_0 \in \mathcal{Q} (\beta,R) } \E_0 \Pi^H \left( f :  \norm{f - f_0}_S \geq M_n L_n  n^{-1/2} | Y \right) \rightarrow 0  ,
\end{equation*}
where $S = H(\delta)$ or $H^{-s}$ for $s > 1/2$. If $S = H(\delta)$ then $L_n = (\log n)^{3/2} (\log \log n)^{1/2}$; if in addition $f_0 \in \mathcal{Q}_{SS} (\beta,R,\varepsilon)$, then the rate improves to $L_n = 1$ for $\delta \geq 2$. If $S = H^{-s}$ for $s > 1/2$, then $L_n = 1$.
\end{theorem}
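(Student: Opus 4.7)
The plan is to parallel the proof of Theorem \ref{thm1} by reducing to the fixed-$\alpha$ Gaussian calculation via the mixture representation
\begin{equation*}
\Pi^H(A \mid Y) = \int_0^\infty \Pi_\alpha(A \mid Y)\, d\lambda_n(\alpha \mid Y),
\end{equation*}
where $\lambda_n(\cdot \mid Y)$ denotes the marginal posterior on $\alpha$. The first step is to invoke the concentration results for $\lambda_n(\cdot \mid Y)$ established in Knapik et al.~\cite{KnSzVVVZ} under Condition \ref{HB density condition}: with $\P_0$-probability tending to one, uniformly over $f_0 \in \mathcal{Q}(\beta,R)$, the marginal posterior places vanishing mass outside a (possibly slowly shrinking) interval $A_n \subset (0, c\log n)$ concentrating around an oracle value $\alpha_n^\ast \approx \beta$. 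Under the self-similarity hypothesis one has a substantially tighter concentration of $\lambda_n(\cdot\mid Y)$ around $\beta$, playing the role of the sharp bounds on $\hat{\alpha}_n$ from Theorem \ref{thm1}.

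With this in hand, I would bound
\begin{equation*}
\Pi^H(\|f - f_0\|_S \geq M_n L_n n^{-1/2} \mid Y) \leq \sup_{\alpha \in A_n} \Pi_\alpha(\|f - f_0\|_S \geq M_n L_n n^{-1/2} \mid Y) + \lambda_n(A_n^c \mid Y),
\end{equation*}
so that it suffices to obtain the desired rate for $\Pi_\alpha(\cdot \mid Y)$ uniformly in $\alpha \in A_n$. For fixed $\alpha$, the explicit Gaussian form \eqref{eq28} reduces this to a Markov-style bias-variance bound: the bias $\sum_k \frac{k^{2(2\alpha+1)}}{(k^{2\alpha+1}+n)^2}f_{0,k}^2$ and the variance $\sum_k \frac{1}{k^{2\alpha+1}+n}$, each weighted by the $S$-inner product coefficients, must be summed and balanced. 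For $S = H^{-s}$ with $s > 1/2$, the $k^{-2s}$ weights tame both terms and yield $L_n = 1$ after a straightforward geometric series computation. For $S = H(\delta)$ the much weaker $k^{-1}(\log k)^{-2\delta}$ weights produce a logarithmic penalty determined by the width of $A_n$; using the non-sharp width $A_n \subset (0, c\log n)$ gives $L_n = (\log n)^{3/2}(\log\log n)^{1/2}$, while under self-similarity the sharp width gives $L_n = 1$ provided $\delta \geq 2$ to absorb the remaining slowly-varying terms.

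The main obstacle will be transferring the sharp control of the empirical Bayes estimator $\hat{\alpha}_n$ to the tail behaviour of the marginal posterior $\lambda_n(\cdot \mid Y)$ under self-similarity, in order to secure $L_n = 1$ in $H(\delta)$. This requires quantitative lower bounds on the marginal likelihood $\ell_n(\alpha)$ at the oracle value, combined with the polynomial-times-exponential decay assumed in Condition \ref{HB density condition}, which ensures the hyperprior does not overwhelm the likelihood-driven concentration. Once this concentration is established, the uniform bias-variance estimate across $\alpha \in A_n$ and the tail bound $\lambda_n(A_n^c \mid Y) = o_{\P_0}(1)$ combine to yield the stated contraction rate, exactly parallelling the argument for $\hat{\alpha}_n$ in Theorem \ref{thm1}.
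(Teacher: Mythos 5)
Your proposal is correct and follows essentially the same route as the paper: the paper's proof simply defers to Theorem 2.5 of Knapik et al.\ \cite{KnSzVVVZ} (concentration of the marginal posterior $\lambda_n(\cdot\mid Y)$ on $[\underline{\alpha}_n,\overline{\alpha}_n]$ followed by a uniform fixed-$\alpha$ bias--variance bound) with the same modifications for the $H^{-s}$ and $H(\delta)$ weights, and with self-similarity supplying the sharp upper bound on the smoothness level needed to control the posterior bias and obtain $L_n=1$. Your sketch fills in precisely these steps.
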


\begin{proof}
This result is proved in the same manner as Theorem 3 in \cite{KnSzVVVZ}, with suitable modifications arising as in the proof of Theorem \ref{thm1}.
\end{proof}

\begin{theorem}\label{parametric BvM theorem}
The finite dimensional projections of the hierarchical Bayesian procedure satisfy a parametric BvM, that is for every finite dimensional subspace $V \subset \ell_2$,
\begin{equation*}
\sup_{f_0 \in \mathcal{Q}(\beta,R)} \E_0 \snorm{ \Pi^H (\cdot | Y) \circ T_\mathbb{Y}^{-1} - N_V (0,I) }_{TV} \rightarrow 0 ,
\end{equation*}
where $\pi_V$ denotes the projection onto $V$ and $T_z : f \mapsto \sqrt{n} \pi_V (f-z)$.
\end{theorem}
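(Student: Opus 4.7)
The plan is to reduce this to Theorem \ref{EB parametric BvM theorem} by exploiting that the hierarchical posterior is a mixture of the conditional posteriors $\Pi_\alpha(\cdot\mid Y)$ against the marginal posterior $\lambda_n(\cdot\mid Y)$ of the smoothness. Writing
\begin{equation*}
\Pi^H(\cdot\mid Y) = \int_0^\infty \Pi_\alpha(\cdot\mid Y)\,\lambda_n(\alpha\mid Y)\,d\alpha,
\end{equation*}
and using the convexity of total variation distance (pushing forward under $T_{\mathbb{Y}}$ and comparing to $N_V(0,I)$ inside the integral), we get
\begin{equation*}
\snorm{\Pi^H(\cdot\mid Y)\circ T_{\mathbb{Y}}^{-1} - N_V(0,I)}_{TV} \leq \int_0^\infty \snorm{\Pi_\alpha(\cdot\mid Y)\circ T_{\mathbb{Y}}^{-1} - N_V(0,I)}_{TV}\,\lambda_n(\alpha\mid Y)\,d\alpha.
\end{equation*}

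Next I would split the integral at some level $A_n \to \infty$ with $A_n = o(\log n)$. On $\{\alpha\leq A_n\}$ I would invoke the exact Gaussian KL computation already carried out in the proof of Theorem \ref{EB parametric BvM theorem}: via Pinsker's inequality,
\begin{equation*}
\snorm{\Pi_\alpha(\cdot\mid Y)\circ T_{\mathbb{Y}}^{-1} - N_V(0,I)}_{TV}^2 \lesssim \frac{J^{2\alpha+2}}{n} + \frac{J^{4\alpha+3}}{n}\max_{1\leq k\leq J} Y_k^2,
\end{equation*}
which, for fixed $J=\dim V$, tends to $0$ in $\P_0$-expectation uniformly in $\alpha\in[0,A_n]$ since $A_n=o(\log n)$ and $\E_0\max_{k\leq J}Y_k^2 = O(1)$. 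On $\{\alpha > A_n\}$ the integrand is trivially bounded by $1$, so this piece contributes at most $\lambda_n([A_n,\infty)\mid Y)$.

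The main technical step, and the real obstacle, is therefore showing that for a suitable $A_n=o(\log n)$,
\begin{equation*}
\sup_{f_0\in\mathcal{Q}(\beta,R)} \E_0\,\lambda_n([A_n,\infty)\mid Y) \longrightarrow 0.
\end{equation*}
This marginal posterior concentration for the smoothness parameter is precisely the type of result proved in \cite{KnSzVVVZ} en route to their adaptive contraction statement, and under Condition \ref{HB density condition} on $\lambda$ their argument (comparing the evidence $\int e^{\ell_n(\alpha)}\lambda(\alpha)d\alpha$ against its contribution from a neighbourhood of $\beta\wedge\alpha^\ast$) yields such a truncation. Indeed, a tail bound of the form $\lambda_n([A_n,\infty)\mid Y) \leq e^{-c_n}$ in $\P_0$-probability for $A_n$ slowly diverging suffices, and this can be extracted from the evidence bounds in \cite{KnSzVVVZ} by essentially the same reasoning that controls $\hat\alpha_n\leq a_n = o(\log n)$ in the empirical Bayes case. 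Once this truncation is in hand, combining the two pieces and taking $\E_0$ gives the result uniformly over $\mathcal{Q}(\beta,R)$.
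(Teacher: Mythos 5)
Your proposal is correct and follows essentially the same route as the paper: the paper likewise uses the mixture representation with convexity of total variation, applies the Kullback--Leibler/Pinsker bound from Theorem \ref{EB parametric BvM theorem} uniformly over small $\alpha$, and controls the tail $\int_{\overline{\alpha}_n}^\infty \lambda(\alpha\mid Y)\,d\alpha$ by invoking the marginal posterior concentration from the proof of Theorem 2.5 of \cite{KnSzVVVZ}. The only cosmetic difference is that the paper truncates at the explicit quantity $\overline{\alpha}_n$ already defined in \cite{KnSzVVVZ} rather than at an abstract $A_n = o(\log n)$, so the tail bound you flag as the ``real obstacle'' is exactly the statement already available there.
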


\begin{proof}
Again let $V = \text{span} \{ e_k : 1 \leq k \leq J \}$. Using Fubini's theorem and that the total variation distance is bounded by 1,
\begin{equation*}
\begin{split}
\snorm{ \Pi^H & (\cdot \mid Y) \circ T_\mathbb{Y}^{-1} - N(0,I_J) }_{TV} \\
& =  \frac{1}{2} \int_{\R^J} \left| \int_0^\infty \lambda(  \alpha \mid Y ) d\Pi_\alpha (\cdot \mid Y) \circ T_{\mathbb{Y}}^{-1}(x) d\alpha - dN(0,I_J)(x) \right|  \\
& \leq \frac{1}{2} \int_0^\infty \lambda (\alpha \mid Y) \int_{\R^J} \left| d\Pi_\alpha (\cdot \mid Y) \circ T_{\mathbb{Y}}^{-1}(x) d\alpha - dN(0,I_J)(x) \right|  d\alpha  \\
& =  \int_0^\infty \lambda(\alpha \mid Y) \snorm{\Pi_\alpha (\cdot \mid Y) \circ T_\mathbb{Y}^{-1} - N(0,I_J)}_{TV} d\alpha  \\
& \leq  \sup_{0 < \leq \alpha \leq \overline{\alpha}_n} \snorm{\Pi_\alpha (\cdot \mid Y) \circ T_\mathbb{Y}^{-1} - N(0,I_J)}_{TV} \int_0^{\overline{\alpha}_n}  \lambda(\alpha \mid Y)  d\alpha  \\
& \quad \quad + \int_{\overline{\alpha}_n}^\infty \lambda (\alpha \mid Y) d\alpha  ,
\end{split}
\end{equation*}
where $\overline{\alpha}_n$ is defined in Section \ref{other paper section}. The first term is $o_{\P_0} (1)$ by the same argument as in the proof of Theorem \ref{EB parametric BvM theorem} and the second term is $o_{\P_0}(1)$ by the proof of Theorem 3 of \cite{KnSzVVVZ}. Since the total variation distance is bounded, convergence in $\P_0$-probability is equivalent to convergence in $L^1 (\P_0)$.
\end{proof}

\begin{proof}[Proof of Theorem \ref{HB weak BvM}]
The proof is exactly the same as that of Theorem \ref{EB weak BvM}, using Theorems \ref{thm2} and \ref{parametric BvM theorem} instead of Theorems \ref{thm1} and \ref{EB parametric BvM theorem}.
\end{proof}

\subsection{Proof of weak BvM result in $L^\infty$ (Theorem \ref{thm3})}

Following Theorem 3.1 of \cite{HoRoSc}, define the sets
\begin{equation*}
\mathcal{J}_n (\gamma) = \left\{ (j,k) \in \Lambda : |f_{0,jk}| > \gamma \sqrt{\log n / n}  \right\}
\end{equation*}
for $\gamma > 0$. In what follows, we denote by $S$ the support of the prior draw, that is the set of non-zero coefficients of $f = (f_{jk})_{(j,k) \in \Lambda}$ drawn from the prior. We require the following contraction result.

\begin{theorem}\label{slab and spike contraction}
Consider the slab and spike prior defined in Section \ref{linfty setting} with lower threshold given by the strictly increasing sequence $j_0 (n) \rightarrow \infty$. Then for every $0 < \beta_{min} \leq \beta_{max}$, $R>0$ and $M_n \rightarrow \infty$, we have
\begin{equation*}
\sup_{f_0 \in \mathcal{H} (\beta,R)}  \E_0 \Pi ( f : \norm{f-f_0}_{\mathcal{M} (w) } \geq M_n n^{-1/2} \mid Y)  \rightarrow 0  
\end{equation*}
uniformly over $\beta \in [\beta_{min},\beta_{max}]$, where $(w_l)$ is any admissible sequence satisfying $w_{j_0 (n)} \geq c \sqrt{\log n}$ for some $c>0$.
\end{theorem}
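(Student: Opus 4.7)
The plan is to split the multiscale norm into three ranges of resolution levels and treat each by a different argument. Writing
\[
\|f - f_0\|_{\mathcal{M}(w)} = \sup_{l} w_l^{-1} \max_k |f_{lk} - f_{0,lk}|,
\]
I would bound the contributions from the high frequencies $l > J_n$, the low frequencies $l \leq j_0(n)$, and the intermediate range $j_0(n) < l \leq J_n$ separately. The high-frequency part is immediate: by construction the prior forces $f_{lk}=0$ for all $l > J_n$, so on the posterior support $w_l^{-1}|f_{lk}-f_{0,lk}| = w_l^{-1}|f_{0,lk}|$, which by $f_0 \in \mathcal{H}(\beta,R)$, $w_l \geq 1$ and $2^{J_n} \geq n/2$ is bounded by $R\,2^{-J_n(\beta+1/2)} \lesssim n^{-(\beta_{\min}+1/2)} = o(n^{-1/2})$, uniformly in $\beta\in[\beta_{\min},\beta_{\max}]$. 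For the low frequencies, the prior for each coordinate is the fixed bounded and positive density $g$, and the coordinatewise posterior is proportional to $g(f_{lk})\exp(-n(Y_{lk}-f_{lk})^2/2)$, which behaves as $N(Y_{lk},1/n)$ up to corrections that are negligible for $n$ large (since $g$ is bounded and positive). Standard Gaussian maximal inequalities then yield $\max_{k}|f_{lk}-Y_{lk}| \lesssim \sqrt{l/n}$ with posterior probability tending to one, and $\max_{k}|Y_{lk}-f_{0,lk}| \lesssim \sqrt{l/n}$ under $\P_0$. Admissibility of $(w_l)$ ensures $\sqrt{l}/w_l$ is bounded and in fact tends to zero, so $\sup_{l\leq j_0(n)} w_l^{-1}\max_k|f_{lk}-f_{0,lk}| = O(M_n/\sqrt{n})$ for any $M_n\to\infty$.

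The intermediate range is where the slab--spike structure has to be exploited and is the real work of the proof. Following the coordinate-wise analysis behind Lemma~1 and Theorem~3.1 of \cite{HoRoSc}, the marginal posterior at $(l,k)$ with $j_0(n)<l\leq J_n$ is again a mixture of an atom at zero and a continuous slab concentrated near $Y_{lk}$ with variance $O(1/n)$, the mixing weight being driven by a thresholding at level of order $\sqrt{\log n/n}$. Choosing $\gamma = \gamma(\beta_{\max})$ large, coefficients with $|f_{0,lk}|\leq \gamma\sqrt{\log n/n}$ (i.e.\ outside $\mathcal{J}_n(\gamma)$) are set to zero by the posterior with probability $1-o(1)$, and on that event $|f_{lk}-f_{0,lk}|=|f_{0,lk}|\leq \gamma\sqrt{\log n/n}$; coefficients in $\mathcal{J}_n(\gamma)$ are located near $Y_{lk}$ and satisfy $|f_{lk}-f_{0,lk}|\leq |f_{lk}-Y_{lk}|+|Y_{lk}-f_{0,lk}|$, which is $O(\sqrt{\log n/n})$ after taking a maximum over the at most $2^{J_n}\lesssim n$ intermediate coordinates (Gaussian tails for the data noise and for the slab under the posterior). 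A union bound over both families, combined with the fact that $f_0\in\mathcal{H}(\beta,R)$ makes $|\mathcal{J}_n(\gamma)|$ controllable, then gives
\[
\sup_{j_0(n)<l\leq J_n} \max_k |f_{lk}-f_{0,lk}| \;=\; O\!\left(\sqrt{\tfrac{\log n}{n}}\right)
\]
with posterior probability tending to one, uniformly in $f_0\in\mathcal{H}(\beta,R)$ and $\beta\in[\beta_{\min},\beta_{\max}]$.

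The final step, and the reason for the hypothesis on the weighting sequence, is to divide the intermediate bound by $w_l \geq w_{j_0(n)} \geq c\sqrt{\log n}$, which cancels the thresholding factor $\sqrt{\log n}$ exactly and recovers the parametric rate $n^{-1/2}$ in the $\mathcal{M}(w)$-norm. Combining the three range estimates and letting $M_n\to\infty$ absorb the multiplicative constants gives the stated uniform contraction. The main obstacle is the intermediate range: one must simultaneously control the posterior probability that a ``small'' coefficient is kept non-zero (inflating the error beyond $\sqrt{\log n/n}$) and that a ``large'' coefficient is missed, and this requires the careful thresholding analysis of \cite{HoRoSc}; the low-frequency prior without thresholding is essential here, for if one were to threshold all coefficients, the $\sqrt{\log n}$ factor at low levels would be unweighted and could not be absorbed, which is precisely the obstruction exhibited in Proposition~\ref{negative BvM in slab and spike}.
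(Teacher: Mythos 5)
Your proposal is correct and follows essentially the same route as the paper: the thresholding event of Theorem 3.1 of \cite{HoRoSc} controls the levels above $j_0(n)$, the hypothesis $w_{j_0(n)} \gtrsim \sqrt{\log n}$ absorbs the $\sqrt{\log n}$ thresholding factor there, and the untruncated low frequencies are handled by a Gaussian-type maximal inequality combined with admissibility of $(w_l)$. The only point to tighten is the low-frequency step: the paper makes ``behaves as $N(Y_{lk},1/n)$'' rigorous via the subgaussian moment-generating-function bound of Lemma 1 of \cite{Ca} followed by the maximal-inequality argument of Theorem 4 of \cite{CaNi2} and Markov's inequality, which is exactly the precise form of what you sketch.
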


\begin{proof}[Proof of Theorem \ref{slab and spike contraction}]
Fix $\eta >0$. Consider the event
\begin{equation}
A_n = \{ S^c \cap \mathcal{J}_n (\overline{\gamma}) =  \emptyset  \} \cap \{ S \cap \mathcal{J}_n^c (\underline{\gamma}) = \emptyset \} \cap \{ \max_{(j,k) \in \mathcal{J}_n (\underline{\gamma} ) } |f_{0,jk} - f_{jk}| \leq \overline{\gamma} \sqrt{ (\log n ) / n} \}.
\label{eq9}
\end{equation}
By Theorem 3.1 of \cite{HoRoSc}, there exist constants $0 < \underline{\gamma} < \overline{\gamma} < \infty$ (independent of $\beta$ and $R$) such that
\begin{equation}
\sup_{f_0 \in \cup_{\beta \in [\beta_{min},\beta_{max}]} \mathcal{H}(\beta,R) } \E_0 \Pi (A_n^c \mid Y) \lesssim n^{-B} ,
\label{eq23}
\end{equation}
for some $B = B(\beta_{min},\beta_{max},R)>0$ (this follows since the probabilities of the complements of each of the events constituting $A_n$ satisfy the above bound individually). We then have the following decomposition for some $D = D(\eta) >0$ large enough to be specified later,
\begin{equation}
\begin{split}
\E_0  \Pi &  \left( \norm{f - f_0}_\mathcal{M} \geq M_n n^{-1/2} \mid Y \right) \\
& \leq  \E_0 \Pi \left( \{ \norm{f - f_0  }_\mathcal{M} \geq M_n n^{-1/2} \} \cap \{ \norm{ \pi_{j_0} (f - f_0) }_\mathcal{M} \leq D n^{-1/2} \} \cap A_n \mid Y \right) \\
& \quad +  \E_0 \Pi \left( \{ \norm{f - f_0}_\mathcal{M} \geq M_n n^{-1/2} \} \cap \{ \norm{ \pi_{j_0} (f - f_0) }_\mathcal{M} >  D n^{-1/2} \} \cap A_n \mid Y \right) \\ 
& \quad + E_0 \Pi (A_n^c \mid Y)  .
\end{split}
\label{eq10}
\end{equation}
Note that the first term on the right-hand side of \eqref{eq10} is bounded by
\begin{equation*}
\E_0 \Pi (  \{ \norm{ \pi_{>j_0} (f - f_0) }_\mathcal{M}  \geq (M_n - D) n^{-1/2} \} \cap A_n \mid Y )  .
\end{equation*}
Combining this with \eqref{eq23}, we can upper bound the right hand side of \eqref{eq10} by
\begin{equation}
\begin{split}
\E_0 \Pi & ( \{ \norm{ \pi_{>j_0} (f - f_0) }_\mathcal{M}  \geq \tilde{M}_n n^{-1/2} \} \cap A_n \mid Y) \\
&  + \E_0 \Pi (  \norm{ \pi_{j_0} (f - f_0) }_\mathcal{M} > D n^{-1/2}  \mid Y) + o(1) ,
\label{eq24}
\end{split}
\end{equation}
where $\tilde{M}_n = M_n - D \rightarrow \infty$ as $n \rightarrow \infty$. We bound the two remaining terms in \eqref{eq24} separately.

For the first term in \eqref{eq24}, we can proceed as in the proof of Theorem 3.1 of \cite{HoRoSc}. By the definition of the H\"older ball $\mathcal{H}(\beta,R)$, there exists $J_n (\beta)$ such that $2^{J_n (\beta)} \leq k (n / \log n)^{1/(2\beta+1)}$ for some constant $k >0$ such that $\mathcal{J}_n (\underline{\gamma}) \subset \{ (j,k): j \leq J_n (\beta), \, \, k=0,...,2^j-1 \}$ and 
\begin{equation*}
\sup_{f_0 \in \mathcal{H} (\beta,R)} \sup_{l > J_n(\beta)} w_l^{-1} \max_k |f_{0,lk}| \leq \frac{ R 2^{-J_n (\beta) (\beta +1/2)} }{ \sqrt{J_n(\beta)} }  \leq C( \beta, R) \frac{1}{\sqrt{n}} .
\end{equation*}
Consider now the frequencies $j_0 < l \leq J_n (\beta)$. On the event $A_n$, we have that
\begin{equation*}
\sup_{j_0 < l \leq J_n (\beta) } \frac{1}{w_l} \max_k | f_{lk} - f_{0,lk} | \leq \frac{1}{ w_{j_0} } \overline{\gamma} \sqrt{\frac{\log n }{n} } \leq  \frac{ \overline{\gamma} }{c} \frac{1}{\sqrt{n}} ,
\end{equation*}
since $w_{j_0 (n)} \geq c \sqrt{\log n}$ by hypothesis. We thus have that on the event $A_n$, $\norm{ \pi_{>j_0}  (f-f_0) }_\mathcal{M} = O(n^{-1/2})$ for any $f_0 \in \mathcal{H} (\beta,R)$, which proves that the first term in \eqref{eq24} is 0 for $n$ sufficiently large.

Consider now the second term in \eqref{eq24}. We shall use the approach of \cite{Ca} using the moment generating function to control the low frequency terms. Recall that on these coordinates we have the simple product prior $\Pi(dx_1,...,dx_{j_0} )= \prod_{k=1}^{j_0} g(x_i) dx_i $. Let $\E^\Pi (\cdot \mid Y)$ denote the expectation with respect to the posterior measure. Following Lemma 1 of \cite{Ca}, we have the subgaussian bound
\begin{equation*}
\E_0 \E^\Pi ( e^{ t \sqrt{n} (f_{lk} - Y_{lk} ) } \mid Y ) \leq C e^{t^2 /2}
\end{equation*}
for some some $C>0$. Using this and proceeding as in the proof of Theorem 4 of \cite{CaNi2} yields
\begin{equation*}
\sqrt{n} \E_0 \E^\Pi \left( \norm{\pi_{j_0} (f - Y) }_{\mathcal{M}} \mid Y \right) =  \E_0 \E^\Pi  \left( \sup_{j \leq j_0} l^{-1/2} \max_k \sqrt{n} | f_{lk} - Y_{lk} | \mid Y \right) \leq C,
\end{equation*}
for some $C >0$. By Markov's inequality and then the triangle inequality, the second term in \eqref{eq24} is then bounded by
\begin{equation}
\begin{split}
\frac{\sqrt{n}}{D} \E_0 \E^\Pi \left( \norm{ \pi_{j_0} (f - f_0) }_\mathcal{M}  \mid Y \right) & \leq   \frac{\sqrt{n}}{D} \E_0 \E^\Pi \left( \norm{\pi_{j_0} (Y - f_0) }_\mathcal{M} \mid Y \right) + \frac{C}{D}  \\
& \leq \frac{\E_0 \snorm{\mathbb{Z}}_\mathcal{M}}{D}  + \frac{C}{D}.
\label{eq12}
\end{split}
\end{equation}
By Proposition 2 of \cite{CaNi2} and the fact that $(w_l)$ is an admissible sequence, the first term in \eqref{eq12} is also bounded by $C'/D$ for some $C' >0$. Taking $D = D(\eta) > 0$ sufficiently large, \eqref{eq12} can be then made smaller than $\eta /2$.
\end{proof}

\begin{proof}[Proof of Theorem \ref{thm3}]
Fix $\eta > 0$ and denote $\tilde{\Pi}_n = \Pi_n \circ \tau_\mathbb{Y}^{-1}$. By the triangle inequality, uniformly over the relevant class of functions,
\begin{equation*}
\beta_{\mathcal{M}_0} ( \tilde{\Pi}_n , \mathcal{N} ) \leq \beta_{\mathcal{M}_0} ( \tilde{\Pi}_n , \tilde{\Pi}_n \circ \pi_j^{-1} ) + \beta_{\mathcal{M}_0} ( \tilde{\Pi}_n \circ \pi_j^{-1} , \mathcal{N} \circ \pi_j^{-1} ) + \beta_{\mathcal{M}_0} ( \mathcal{N} \circ \pi_j^{-1} , \mathcal{N} ) ,
\end{equation*}
for fixed $j > 0$. Since we have a $1/\sqrt{n}$-contraction rate in $\mathcal{M}$ for the posterior from Theorem \ref{slab and spike contraction}, we can make the $\E_0$-expectation of the first term smaller than $\eta /3$ by taking $j$ sufficiently large, again using the arguments of Theorem 8 of \cite{CaNi}. We recall from the proof of that theorem that if the required contraction is established in $\mathcal{M}(\overline{w})$ for an admissible sequence $(\overline{w}_l)$, then the required tightness argument holds in $\mathcal{M}_0 (w)$ for any admissible $(w_l)$ such that $w_l / \overline{w}_l \nearrow \infty$. A similar result holds for the third term.

For the middle term, note that $j_0 (n) \geq j$ for $n$ large enough. For such $n$, the projected prior onto the first $j$ coordinates is a simple product prior which satisfies the usual conditions of the parametric BvM, namely it has a density that is positive and continuous at the true (projected) parameter (see Chapter 10 of \cite{VV} for more details). Since the total variation distance dominates the bounded Lipschitz metric, this completes the proof.
\end{proof}

\subsection{Credible sets}

\subsubsection*{$\ell_2$ confidence sets}

\begin{proof}[Proof of Proposition \ref{EB credible set}]
By Lemma \ref{coverage lemma} and the definition of $\tilde{C}_n$, we have
\begin{equation*}
\sup_{\substack{f_0 \in \mathcal{Q}_{SS} (\beta,R,\varepsilon) \\\ \beta \in [\beta_{1},\beta_{2}] }} |\P_0 (f_0 \in \tilde{C}_n) - \P_0 ( \snorm{f_0 - \mathbb{Y} }_H \leq R_n / \sqrt{n} )| \rightarrow 0
\end{equation*}
as $n \rightarrow \infty$, so that it is sufficient to show that the second probability in the above display tends to $1-\gamma$, uniformly over the relevant self-similar Sobolev balls. This follows directly by Theorem 1 of \cite{CaNi} (an examination of that proof shows that the convergence holds uniformly over the parameter space as long as the weak BvM itself holds uniformly, as is the case here), thereby establishing the required coverage statement.

Since $\epsilon_n \geq r_1/\log n$ and $C > 1/r_1$ in the definition \eqref{EB credible set definition} of $\tilde{C}_n$, applying Lemma \ref{exponential lemma} (with $\eta = 1)$ yields the inequality
\begin{equation*}
\Pi_{\hat{\alpha}_n} \big( \snorm{f - \hat{f}_n}_{H^{\hat{\alpha}_n - \epsilon_n}} \geq C \sqrt{\log n} \big| Y \big) \lesssim \exp \left( -C' (\log n) n^{\frac{1}{4\hat{\alpha}_n + 2}} \right),
\end{equation*}
where $C' >0$ does not depend on $\hat{\alpha}_n$. Since by Lemma \ref{smoothness lemma} we have $\hat{\alpha}_n \leq \beta$ for large enough $n$ with $\P_0$-probability tending to 1, the right-hand side is bounded by a multiple of $\exp(-C'' (\log n) n^{1/(4\beta+2)})$ with the same probability. The completes the credibility statement.

Let $f_1,f_2 \in \tilde{C}_n$ and set $g = f_1 - f_2$. Picking $J_n \sim  [n / (\log n)^{2\delta-1} ]^{1/( 1 + 2\hat{\alpha}_n - 2\epsilon_n )}$ yields
\begin{equation*}
\begin{split}
\norm{g}_2^2  = \sum_{k=1}^\infty | g_k|^2 & = \sum_{k=1}^{J_n} k k^{-1} (\log k)^{2\delta - 2\delta} |g_k|^2   +  \sum_{k=J_n+1}^\infty k^{2(\hat{\alpha}_n - \epsilon_n )  - 2(\hat{\alpha}_n - \epsilon_n ) } |g_k|^2 \\
& \leq J_n (\log J_n)^{2\delta} \norm{g}_{H(\delta)}^2 +  J_n^{-2 ( \hat{\alpha}_n - \epsilon_n ) } \norm{g}_{H^{\hat{\alpha}_n - \epsilon_n } }^2   \\
& = O_{\P_0} \left(  J_n (\log J_n)^{2\delta} n^{-1}   +   J_n^{-2 ( \hat{\alpha}_n - \epsilon_n ) } (\log n)  \right)  \\
& = O_{\P_0} \left(  n^{-\frac{  2(\hat{\alpha}_n - \epsilon_n) }{ 1 + 2\hat{\alpha}_n - 2\epsilon_n }} ( \log n)^{ \frac{ 4\delta  ( \hat{\alpha}_n - \epsilon_n )  +1 } { 1 + 2\hat{\alpha}_n -2\epsilon_n } }  \right)  ,
\end{split}
\end{equation*}
where the constants do not depend on $g$. Since $| \hat{\alpha}_n - \beta| = O_{\P_0} (1 / \log n)$ by Lemma \ref{smoothness lemma} and $\epsilon_n = O(1/\log n)$ by assumption, some straightforward computations yield that $\norm{g}_2^2 = O_{\P_0} (n^{-2\beta / (2\beta + 1)} (\log n)^{ (4\delta \beta +1) / (2\beta  + 1) }  )$ as $n \rightarrow \infty$.
\end{proof}

\begin{proof}[Proof of Proposition \ref{HB credible set}]
The proof follows in the same way as that of Proposition \ref{EB credible set}, using Lemma \ref{posterior median lemma} and an analogue of Lemma \ref{coverage lemma}. The only difference is for the credibility statement, where we no longer have an exponential inequality like Lemma \ref{exponential lemma}. However, arguing as in \cite{KnSzVVVZ} with the $H^{\hat{\beta}_n}$-norm instead of the $\ell_2$-norm, one can show that under self-similarity the posterior contracts about the posterior mean at rate $\tilde{M}_n \sqrt{\log n}$ for any $\tilde{M}_n \rightarrow \infty$ (see also Theorem 1.1 of \cite{SzVVVZ4}). It then follows that the second constraint in \eqref{credible set definition 2} is satisfied with credibility $1-o_{\P_0}(1)$.
\end{proof}

\subsubsection*{$L^\infty$ confidence bands}

\begin{proof}[Proof of Proposition \ref{credible band}]
By Lemma \ref{median lemma}, it suffices to prove all the results on the event $B_n$ defined in \eqref{eq19}. We firstly establish the diameter of the confidence set. Recall that $\pi_{med}$ denotes the projection onto the non-zero coordinates of the posterior median and for a set of coordinates $E$, let $\pi_E$ denote the projection onto $\text{span}(E)$. Taking $f_1,f_2 \in \overline{D}_n$ and setting $2^{J_n (\beta)} \simeq (n / \log n)^{1/(2\beta+1)}$, we have on $B_n$,
\begin{equation*}
\begin{split}
\norm{f_1 - f_2 }_{\infty} & \leq \norm{f_1 - \pi_{med}(Y) }_{\infty} + \norm{f_2 - \pi_{med}(Y) }_{\infty}   \\
& \leq 2 \sup_{x \in [0,1]}  \sum_{l=0}^{J_n (\beta)} \sum_{k=0 }^{2^l-1}  v_n \sqrt{\frac{\log n }{n}}  | \psi_{lk} (x) |  \\
& \leq C(\psi) v_n \sqrt{\frac{\log n }{n}}  \sum_{l=0}^{J_n (\beta)} 2^{l/2}  \leq C' v_n \sqrt{\frac{2^{J_n(\beta)} \log n}{n}}  = O_{\P_0} \left( \left(  \frac{\log n}{n} \right)^{ \frac{\beta}{2\beta+1} }  v_n  \right)  .
\end{split}
\end{equation*}

We now establish asymptotic coverage. Split $f_0 = \pi_{ \mathcal{J}_n (\underline{\gamma} ) } (f_0) + \pi_{ \mathcal{J}_n^c (\underline{\gamma}) }  (f_0)$. Since $\pi_{ \mathcal{J}_n^c (\underline{\gamma}) } \circ \pi_{med} (Y) = 0$ on $B_n$, we can write
\begin{equation}
\snorm{f_0 - \pi_{med}(Y)}_\infty \leq \snorm{ \pi_{med} (f_0 - Y) }_{\infty}   +    \snorm{ (id - \pi_{med} ) \circ \pi_{ \mathcal{J}_n  ( \underline{\gamma} ) } (f_0)    }_{\infty} + \snorm{ \pi_{ \mathcal{J}_n^c (\underline{\gamma}) }  (f_0)  }_{\infty} ,
\label{eq15}
\end{equation}
where $id$ denotes the identity operator. For the third term in \eqref{eq15}, note that since $f_0 \in \mathcal{H} (\beta,R)$,
\begin{equation}
\begin{split}
\snorm{ \pi_{ \mathcal{J}_n^c (\underline{\gamma}) }  (f_0)  }_{\infty} & \leq \sum_{l=0}^\infty 2^{l/2} \max_{ k: (l,k) \in \mathcal{J}_n^c (\underline{\gamma}) } | \langle f_0, \psi_{lk} \rangle |  \\
& \leq \sum_{l=0}^{J_n (\beta) } 2^{l/2} \underline{\gamma} \sqrt{\frac{\log n}{n} }  + \sum_{l > J_n (\beta) } 2^{-l\beta}  \leq C (\beta, R) \left( \frac{\log n}{n}  \right)^{ \frac{\beta}{2\beta + 1} } .
\end{split}
\label{eq16}
\end{equation}
For the second term in \eqref{eq15}, we note that any indices remaining satisfy $(l,k) \in \mathcal{J}_n^c (\overline{\gamma}')$ and so by the same reasoning as above, this term is also $O ( (\log n / n)^{\beta/(2\beta +1)} )$.

By the proof of Proposition 3 of \cite{HoNi}, we have that for $f_0 \in \mathcal{H}_{SS} (\beta,R,\varepsilon)$,
\begin{equation*}
\sup_{(l,k) : l \geq j} | \langle f_0 , \psi_{lk} \rangle | \geq d(b,R,\beta,\psi) 2^{-j (\beta + 1/2) } .
\end{equation*}
Let $\tilde{J}_n (\beta)$ be such that $\frac{\epsilon}{2}  (n / \log n)^{1/(2\beta+1) } \leq 2^{\tilde{J}_n (\beta) } \leq \epsilon  (n / \log n)^{1/(2\beta+1) }$, where $\epsilon = \epsilon (b,R,\beta,\psi) > 0 $ is small enough so that $d/\epsilon^{\beta + 1/2} > \overline{\gamma}'$. Using this yields
\begin{equation*}
\sup_{(l,k) : l \geq \tilde{J}_n (\beta) } | \langle f_0 , \psi_{lk} \rangle | \geq \frac{ d(b,R,\beta,\psi) }{ \epsilon^{\beta + 1/2} } \sqrt{ \frac{\log n}{n} }  > \overline{\gamma }' \sqrt{ \frac{\log n}{n} }  .
\end{equation*}
We therefore have that on the event $B_n$, there exists $(l',k')$ with $l' \geq \tilde{J}_n (\beta)$ such that $\tilde{f}_{l'k'} \neq 0$ and a non-zero coefficient therefore appears in the definition \eqref{eq18} of $\sigma_{n,\gamma}$. We can thus lower bound
\begin{equation}
\sigma_{n,\gamma}   \geq v_n \sqrt{\frac{\log n }{n}} \sup_{x\in [0,1]}   | \psi_{l'k'} (x) |     \geq c(\psi) v_n \sqrt{\frac{2^{\tilde{J}_n(\beta)} \log n }{n}}  = c'  v_n   \left( \frac{\log n}{n} \right)^{\frac{\beta}{2\beta + 1}}  .  
\label{eq26}
\end{equation}
Now, since $v_n \rightarrow \infty$ as $n \rightarrow \infty$, we have from \eqref{eq16} and the remark after it that for sufficiently large $n$ (depending on $\beta$ and $R$), the last two terms in \eqref{eq15} satisfy
\begin{equation*}
 \snorm{ (id - \pi_{med} ) \circ \pi_{ \mathcal{J}_n  ( \underline{\gamma} ) } (f_0)    }_{\infty} + \snorm{ \pi_{ \mathcal{J}_n^c (\underline{\gamma}) }  (f_0)  }_{\infty}   \leq C \left( \frac{\log n}{n} \right)^{\frac{\beta}{2\beta + 1}} \leq  \sigma_{n,\gamma} /2 .
\end{equation*}

For the first term in \eqref{eq15} we recall that on $B_n$, the posterior median only picks up coefficients $(l,k)$ with $l \leq J_n (\beta) \leq J_n$. Therefore on this event,
\begin{equation*}
\begin{split}
\snorm{\pi_{med}(f_0 - Y)}_\infty & \leq \sup_{x \in [0,1]} \sum_{(l,k): \tilde{f}_{lk} \neq 0} |f_{0,lk} - Y_{lk} | | \psi_{lk} (x) |  \\
& \leq C(\psi) \sqrt{ \frac{\log n}{n} } \sum_{(l,k) : l \leq J_n (\beta)} 2^{l/2}  \leq C' \left( \frac{\log n}{n} \right)^{ \frac{1}{2\beta + 1} }  .
\end{split}
\end{equation*}
Using the lower bound \eqref{eq26}, we deduce that on $B_n$, $\norm{ \pi_{med} (f_0 - Y)}_\infty \leq  \sigma_{n,\gamma}(Y) /2$ for $n$ large enough, uniformly over $f_0 \in \mathcal{H}_{SS} (\beta,R)$. Combining all of the above yields that $B_n \subset  \{ \snorm{f_0 - \pi_{med}(Y) }_\infty \leq \sigma_{n,\gamma} \}$. We therefore conclude that
\begin{equation*}
\begin{split}
\P_0 (f_0 \in \overline{D}_n ) & = \P_0 ( \{ \snorm{ f_0 - \mathbb{Y}}_{\mathcal{M}(w)} \leq R_n / \sqrt{n} \} \cap \{ \snorm{f_0 - \pi_{med}(Y)}_\infty \leq  \sigma_{n,\gamma} \} \cap B_n ) + o(1)  \\
& =  \P_0 ( \{ \snorm{ f_0 - \mathbb{Y}}_{\mathcal{M}(w)} \leq R_n / \sqrt{n} \} \cap B_n ) + o(1)  \\
& = 1 - \gamma + o(1)  ,
\end{split}
\end{equation*}
where we have used that $\P_0 (B_n) \rightarrow 1$ and that $\P_0 (\snorm{f_0 - \mathbb{Y}}_{\mathcal{M}(w)} \leq R_n / \sqrt{n}) \rightarrow 1 - \gamma$ by Theorem 5 of \cite{CaNi2}. Noting finally that both of these probabilities converge uniformly over the relevant self-similar Sobolev balls, the coverage statement also holds uniformly as required.

For the credibility statement it suffices to show that the second constraint in \eqref{eq14} is satisfied with posterior probability tending to 1. Again using that $\norm{ \pi_{med} (f_0 - Y)}_\infty \leq  \sigma_{n,\gamma}(Y) /2$ on $B_n$ as well as \eqref{eq26}, we have that uniformly over $f_0 \in \mathcal{H}(\beta,R)$,
\begin{equation*}
\begin{split}
& \E_0 \Pi ( f : \snorm{f - \pi_{med}(Y)}_\infty \geq \sigma_{n,\gamma} \mid Y)   \\
& \quad \leq  \E_0   \Pi ( f : \snorm{f - f_0}_\infty \geq \sigma_{n,\gamma}/2 \mid Y) + \E_0 \Pi ( f : \snorm{f_0 - \pi_{med}(Y)}_\infty \geq \sigma_{n,\gamma}/2 \mid Y)  \\
& \quad \leq \E_0   \Pi ( f : \snorm{f - f_0}_\infty \geq c' v_n ((\log n)/n)^{\beta/(2\beta+1)}/2  \mid Y) +  \P_0 (B_n^c) \rightarrow 0 
\end{split}
\end{equation*}
since the posterior contracts at rate $(\log n/n)^{\beta/(2\beta+1)}$ by Theorem 3.1 of \cite{HoRoSc}.
\end{proof}

\subsection{Posterior independence of the credible sets}

\begin{proof}[Proof of Theorem \ref{l2 asymptotic independence}]
We first consider the fixed-regularity prior $\Pi_\alpha$ with $\alpha \in [0,a_n]$, replacing the sets $\tilde{C}_n$ and $C_n^{\ell_2}$ respectively by the $(1-\gamma)$-$H(\delta)$-credible ball $C_n^{(\alpha)}$ and the $(1-\gamma)$-$\ell_2$-credible ball $C_n^{(\alpha,\ell_2)}$ for $\Pi_\alpha (\cdot \mid Y)$ (i.e. \eqref{eq29} and \eqref{l2 credible set} for $\Pi_\alpha (\cdot \mid Y)$ rather than $\Pi_{\hat{\alpha}_n} (\cdot \mid Y)$). By the definition of the posterior distribution \eqref{eq28}, we can write a posterior draw $f \sim \Pi_\alpha (\cdot \mid Y)$ as
\begin{equation*}
f - \hat{f}_{n,\alpha} = \sum_{k=1}^\infty \frac{1}{\sqrt{k^{2\alpha+1}+n}} \zeta_k e_k ,
\end{equation*}
where $\zeta_k \sim N(0,1)$ are independent and $\hat{f}_{n,\alpha}$ is the posterior mean. Let $k_n \rightarrow \infty$ be some sequence satisfying $k_n = o(n^{1/(4\alpha+2)})$. We shall prove the result by showing that the $H(\delta)$-credible ball is determined by the frequencies $k\leq k_n$, while the $\ell_2$-credible ball is determined by the frequencies $k\geq k_n$. We therefore decompose both credible balls according to the threshold $k_n$.

By Lemma 1 of \cite{LaMa}, which can be adapted to the case $D=\infty$, and some elementary computations, we have the following exponential inequalities for any $x \geq 0$:
\begin{equation}
\P \left( \sum_{k=k_n+1}^\infty  \frac{\zeta_k^2}{k(\log k)^{2\delta}} \geq \frac{C (\delta)}{(\log k_n)^{2\delta}} \left(  \log k_n  + \sqrt{\frac{x}{k_n}} +  \frac{x}{k_n } \right) \right) \leq e^{-x} ,
\label{exp inequality 1}
\end{equation}
\begin{equation}
\P \left( \sum_{k=1}^{k_n}  \zeta_k^2 \geq k_n + 2\sqrt{k_n x} + 2x \right) \leq e^{-x} ,
\label{exp inequality 2}
\end{equation}
\begin{equation}
\P \left( \sum_{k=k_n+1}^\infty \frac{\zeta_k^2}{k^{2\alpha+1}+n} \leq n^{-\frac{2\alpha}{2\alpha+1}} \left( \frac{1}{4\alpha}  - 2^{3/2} n^{-\frac{1}{4\alpha+2}} \sqrt{x}  \right)  \right) \leq e^{-x},
\label{exp inequality 3}
\end{equation}
where $C(\delta)<\infty$ for $\delta > 1/2$. Moreover, note that
\begin{equation}
\P \left( \sum_{k=1}^{k_n} \frac{\zeta_k^2}{(k^{2\alpha+1}+n) k (\log k)^ {2\delta-1}} \leq \frac{x}{n} \right) \leq \P \left( \zeta_1^2/2 \leq x \right) \leq \sqrt{x/\pi},
\label{exp inequality 4}
\end{equation}
using that $\zeta_1$ is standard normal. Define the event 
\begin{equation*}
\begin{split}
\tilde{A}_{n,\alpha} &= \left\{  \sum_{k=k_n+1}^\infty \frac{\zeta_k^2}{k(\log k)^{2\delta}} \leq \frac{3C(\delta)}{(\log k_n)^{2\delta-1}} \right\} \cap \left\{ \sum_{k=1}^{k_n} \zeta_k^2 \leq 5k_n  \right\}  \\
& \quad \quad \cap \left\{ \sum_{k=k_n+1}^\infty \frac{\zeta_k^2}{k^{2\alpha+1}+n} \geq \frac{1}{8a_n} n^{-\frac{2\alpha}{2\alpha+1}} \right\} \\
& \quad \quad \cap \left\{ \sum_{k=1}^{k_n} \frac{\zeta_k^2}{(k^{2\alpha+1}+n) k (\log k)^ {2\delta-1}} \geq \frac{1}{n (\log k_n)^{\delta-1/2} }\right\}.
\end{split}
\end{equation*}
Setting $x = k_n$ in the inequalities \eqref{exp inequality 1} and \eqref{exp inequality 2}, $x = 2^{-9} a_n^{-2} n^{1/(2a_n+1)}$ in \eqref{exp inequality 3} and $x=(\log k_n)^{-(\delta-1/2)}$ in \eqref{exp inequality 4} yields
\begin{equation*}
\sup_{\alpha\in[0,a_n]} \Pi_\alpha (\tilde{A}_{n,\alpha}^c \mid Y) \leq  2e^{-k_n} + e^{-2^{-9} a_n^{-2} n^{1/(2a_n+1)} } + (\log k_n)^{-(2\delta-1)/4}/\sqrt{\pi} \rightarrow 0
\end{equation*}
as $n\rightarrow \infty$, since $a_n \leq \log n/(6\log \log n)$ by assumption. We have that on $\tilde{A}_{n,\alpha}$,
\begin{equation*}
\frac{1}{8a_n} n^{-\frac{2\alpha}{2\alpha+1}} \leq \snorm{f - \hat{f}_{n,\alpha}}_2^2 \leq \frac{5k_n}{n} +  \sum_{k=k_n+1}^\infty \frac{\zeta_k^2}{k^{2\alpha+1}+n}  ,
\end{equation*}
\begin{equation*}
\frac{1}{n (\log k_n)^{\delta-1/2}} \leq \snorm{f - \hat{f}_{n,\alpha}}_{H(\delta)}^2  \leq \sum_{k=1}^{k_n} \frac{\zeta_k^2}{(k^{2\alpha+1}+n)k (\log k)^{2\delta}} + \frac{3C(\delta)}{n(\log k_n)^{2\delta-1}}  .
\end{equation*}

Recall that $C_n^{(\alpha)}$ has radius equal to $R_n/\sqrt{n}$, where $R_n (Y,\gamma) \rightarrow^{\P_0} R(\gamma)>0$ by Theorem 1 of \cite{CaNi}. Similarly, $C_n^{(\alpha,\ell_2)}$ has radius $Q_n n^{-\alpha/(2\alpha+1)}$, where $Q_n \rightarrow Q >0$ by Theorem 1 of \cite{Fr}. Using these facts, the above bounds and the definition of $k_n$, the probability $\Pi_\alpha (C_n^{(\alpha)} \cap C_n^{(\alpha,\ell_2)} \mid Y)$ equals
\begin{equation*}
\begin{split}
&  \Pi_\alpha \Big(  \Big\{ \snorm{f-\hat{f}_n}_{H(\delta)}^2  \leq \frac{R_n^2}{n}, \quad   \snorm{f - \hat{f}_n}_2^2  \leq Q_n^2 n^{-\frac{2\alpha}{2\alpha+1}} \Big\} \cap \tilde{A}_n \Big| Y \Big) + o(1)  \\
& = \Pi_\alpha \bigg( \bigg\{ \sum_{k=1}^{k_n} \frac{\zeta_k^2}{(k^{2\alpha+1}+n)k (\log k)^{2\delta}}  \leq \frac{1}{n}\Big( R_n^2 + O \left( (\log k_n)^{-(2\delta-1)} \right) \Big)   , \\
& \quad \quad \quad \quad  \sum_{k=k_n+1}^\infty \frac{\zeta_k^2}{k^{2\alpha+1}+n}  \leq \left( Q_n^2 + O\left( k_n n^{-\frac{1}{2\alpha+1}}  \right) \right) n^{-\frac{2\alpha}{2\alpha+1}} \bigg\} \cap \tilde{A}_n  \bigg| Y \bigg) + o(1) \\
& =  \Pi_\alpha \bigg(  \sum_{k=1}^{k_n} \frac{\zeta_k^2}{(k^{2\alpha+1}+n)k (\log k)^{2\delta}}  \leq \frac{R_n^2 + o(1)}{n} ,   \sum_{k=k_n+1}^\infty \frac{\zeta_k^2}{k^{2\alpha+1}+n}  \leq  \frac{Q_n^2 + o(n^{-\frac{1}{4\alpha+2}})}{n^{\frac{2\alpha}{2\alpha+1}}} \bigg| Y \bigg) + o(1) \\
& =  \Pi_\alpha \left(  \sum_{k=1}^{k_n} \frac{\zeta_k^2}{(k^{2\alpha+1}+n)k (\log k)^{2\delta}}  \leq \frac{R_n^2 + o(1)}{n} \Bigg| Y \right)   \\
& \quad \quad \times  \Pi_\alpha \left( \sum_{k=k_n+1}^\infty \frac{\zeta_k^2}{k^{2\alpha+1}+n}  \leq \frac{Q_n^2 + o(n^{-\frac{1}{4\alpha+2}})}{n^{\frac{2\alpha}{2\alpha+1}}}  \Bigg| Y \right) + o(1) ,
\end{split}
\end{equation*}
where in the last line we have used the independence of the coordinates under the posterior. Using again the inequalities \eqref{exp inequality 1}-\eqref{exp inequality 4}, the final line equals
\begin{equation*}
\begin{split}
& \Pi_\alpha \left( \left. \snorm{f-\hat{f}_{n,\alpha}}_{H(\delta)}^2  \leq (R_n^2 + o(1))/n \right| Y \right) \\
& \quad \times \Pi_\alpha \left( \left. \snorm{f - \hat{f}_{n,\alpha}}_2^2 \leq \left( Q_n^2 + o(n^{-\frac{1}{4\alpha+2}})  \right) n^{-\frac{2\alpha}{2\alpha+1}} \right| Y \right) + o(1)\\
& =: \Pi_{\alpha,n}^{(1)} \times \Pi_{\alpha,n}^{(2)} + o(1).
\end{split}
 \end{equation*}
Since $\sup_{\alpha\in[0,a_n]} \Pi_\alpha (\tilde{A}_{n,\alpha}^c \mid Y)\rightarrow 0$, the previous display holds uniformly over $\alpha \in [0,a_n]$ so that we have shown
\begin{equation}
\sup_{\alpha \in [0,a_n]} \left| \Pi_\alpha (C_n^{(\alpha)} \cap C_n^{(\alpha,\ell_2)} \mid Y) - \Pi_{\alpha,n}^{(1)} \times \Pi_{\alpha,n}^{(2)}  \right| = o(1) .
\label{eq_prod}
\end{equation}

For the full empirical Bayes posterior, note that the second constraint in \eqref{EB credible set definition} is satisfied with posterior probability $1 - o_{\P_0} (1)$ uniformly over $f_0 \in \mathcal{Q}(\beta,R)$ by the proof of Proposition \ref{EB credible set}, so that it suffices to prove the theorem with $C_n$ in \eqref{eq29} instead of $\tilde{C}_n$. Since $a_n=o(\log n)$, we can take $k_n \rightarrow \infty$ such that $k_n = o(n^{1/(4a_n+2)})$, from which also $k_n = o(n^{1/(4\alpha+2)})$ for all $\alpha \in [0,a_n]$, the interval over which $\hat{\alpha}_n$ ranges. By \eqref{eq_prod}, it therefore remains to check that $\Pi_{\hat{\alpha}_n,n}^{(1)}, \Pi_{\hat{\alpha}_n,n}^{(2)} \rightarrow^{\P_0} 1-\gamma$ as $n\rightarrow \infty$.

For this we require a finer understanding of the posterior behaviour of the norms in the second to last display. Consider firstly $\Pi_{\hat{\alpha}_n,n}^{(1)}$. By Theorem \ref{EB weak BvM} and Lemma \ref{l2 efficient estimator}, $\Pi_{\hat{\alpha}_n}$ satisfies a weak BvM in $H(\delta)$ with centering $\hat{f}_n = \hat{f}_{n,\hat{\alpha}_n}$ instead of $\mathbb{Y}$. By Theorem 1 of \cite{CaNi}, $R_n \rightarrow^{\P_0} \tilde{\Phi}^{-1}(1-\gamma)>0$, where $\tilde{\Phi}$ is defined via $\tilde{\Phi}(t) = \mathcal{N}(\|\mathbb{Z}\|_{H(\delta)} \leq t)$ and we recall $\mathcal{N}$ is the law of the white noise $\mathbb{Z}$ as an element of $H(\delta)$. Note that $\Pi_{\hat{\alpha}_n,n}^{(1)}$ equals
\begin{align*}
\mathcal{N}(\|\mathbb{Z}\|_{H(\delta)} \leq R_n + o(1)) + O \left( \sup_{t\geq 0} | \Pi_{\hat{\alpha}_n} (\sqrt{n}\snorm{f-\hat{f}_{n,\hat{\alpha}_n}}_{H(\delta)} \leq t|Y) - \mathcal{N}(\|\mathbb{Z}\|_{H(\delta)} \leq t) | \right).
\end{align*}
Since $\tilde{\Phi}$ is strictly monotone and continuous, the first term equals $\tilde{\Phi} (R_n + o(1)) = (1-\gamma) + o_{\P_0}(1)$. Using that $H(\delta)$-norm balls form a uniformity class for $\mathcal{N}$ (see the proof of Theorem 1 of \cite{CaNi}), the second term is $o_{\P_0}(1)$ as required.

We now turn our attention to $\Pi_{\hat{\alpha}_n,n}^{(2)}$. By Theorem 1 of \cite{KnSzVVVZ}, $\sup_{f_0 \in \mathcal{Q}(\beta,R)} \P_0 (\hat{\alpha}_n \in [\underline{\alpha}_n,\overline{\alpha}_n]) \rightarrow 1$, where $\underline{\alpha}_n,\overline{\alpha}_n$ are defined in Section \ref{other paper section}. By Lemma 1(i) of \cite{KnSzVVVZ}, $\underline{\alpha}_n \geq \beta - C/\log n \geq \beta/2$ for $n$ large enough. Since also $\hat{\alpha}_n \leq a_n$ by the choice \eqref{likelihood}, $\sup_{f_0 \in \mathcal{Q}(\beta,R)} \P_0 (\hat{\alpha}_n \in [\beta/2,a_n]) \rightarrow 1$ and we may therefore restrict $\hat{\alpha}_n$ to this interval.

By Theorem 1 of Freedman \cite{Fr}, we have that for $f\sim \Pi_\alpha (\cdot | Y)$,
\begin{equation*}
\|f-\hat{f}_{n,\alpha}\|_2^2 = C_{n,\alpha} + \sqrt{D_{n,\alpha}} Z_{n,\alpha},
\end{equation*}
where $C_{n,\alpha}n^{\frac{2\alpha}{2\alpha+1}} \rightarrow C_\alpha = \int_0^\infty (1+u^{2\alpha+1})^{-1} du$, $D_{n,\alpha}n^{\frac{4\alpha+1}{2\alpha+1}} \rightarrow D_\alpha = 2\int_0^\infty (1+u^{2\alpha+1})^{-2} du$, $Z_{n,\alpha}$ has mean 0, variance 1 and $Z_{n,\alpha} \rightarrow^d N(0,1)$ as $n \rightarrow \infty$. We wish to make these three convergence statements uniform over $\alpha \in [\beta/2,a_n]$. The first two statements essentially follow from \cite{Fr} upon keeping careful track of the remainder terms. Let $g_\alpha(u) = 1/(1+u^{2\alpha+1})$. First note that $C_{n,\alpha} = \sum_{k=1}^\infty 1/(k^{2\alpha+1}+n)$ and $n^{\frac{2\alpha}{2\alpha+1}}/(k^{2\alpha+1}+n) = h_n g_\alpha(kh_n)$, where $h_n = n^{-1/(2\alpha+1)}$. Using these yields that for any $L>0$, $|C_{n,\alpha}n^{\frac{2\alpha}{2\alpha+1}}-C_\alpha|$ is bounded above by
\begin{align*}
\left| \sum_{k=1}^{L/h_n} h_n g_\alpha(kh_n) - \int_0^L g_\alpha(u) du \right| + \sum_{k=L/h+1}^\infty h_n g_\alpha(kh_n) +  \int_L^\infty g_\alpha(u) du .
\end{align*}
The second and third terms are easily bounded by $L^{-2\alpha} /(2\alpha)$. The first term is just the error when approximating $\int_0^L g_\alpha$ with its (right) Riemann sum with $L/h$ points, which is bounded by $\|g_\alpha'\|_{L^\infty[0,L]}L^2/(2L/h_n)$. One can show that $\|g_\alpha'\|_{L^\infty[0,\infty)} = \alpha^{1-\frac{1}{2\alpha+1}} (\alpha+1)^{1+\frac{1}{2\alpha+1}} /(2\alpha+1)$. Substituting this into the bound for the Riemann sum and optimizing the three terms over $L$ gives that the previous display is bounded by a multiple of
\begin{align*}
\frac{1}{\alpha} \left( \frac{(2\alpha)^{2-\frac{1}{2\alpha+1}}(2\alpha+2)^{1+\frac{1}{2\alpha+1}}}{2\alpha+1} \right)^\frac{2\alpha}{2\alpha+1} h_n^\frac{2\alpha}{2\alpha+1}.
\end{align*}
It can be checked that over the range  $0 < \beta/2 \leq \alpha \leq a_n \rightarrow \infty$, the above display without the $h_n^\frac{2\alpha}{2\alpha+1}$ term is maximized at $a_n$ for $n$ large enough, whereupon it can be bounded by a constant multiple of $a_n$. The continuous function $\alpha \mapsto h_n^\frac{2\alpha}{2\alpha+1} = n^{-\frac{2\alpha}{(2\alpha+1)^2}}$ has a single minimum on $[0,\infty)$ occurring at $\alpha = 1/2$, is strictly decreasing on $[0,1/2]$, strictly increasing on $[1/2,\infty)$ and attains its maximal value of 1 at $\alpha = 0,\infty$. Since we consider only the region $\alpha \geq \beta/2$, it follows that the maximum will occur at $a_n$ for $n$ large enough, depending only on $\beta$. We have therefore shown that the previous display is bounded above by
\begin{align*}
C(\beta)a_n n^{-\frac{2a_n}{(2a_n+1)^2}} \leq C(\beta) \exp \left( \log a_n - \frac{\log n}{4a_n}\right) \rightarrow 0,
\end{align*}
where the inequality holds for $n$ large enough (depending on $a_n$) and the convergence to zero follows since $a_n \leq \log n/(6\log n \log n)$ by assumption. In conclusion, $\sup_{\alpha\in[\beta/2,a_n]} |C_{n,\alpha}n^{\frac{2\alpha}{2\alpha+1}}-C_\alpha | \rightarrow 0$. Identical computations yield that $\sup_{\alpha\in[\beta/2,a_n]} |D_{n,\alpha}n^{\frac{4\alpha+1}{2\alpha+1}} -D_\alpha | \rightarrow 0$.

It remains only to show the uniformity of the convergence in distribution, which is based on the central limit theorem (Theorem 1 of \cite{Fr}). Under the posterior,
\begin{align*}
Z_{n,\alpha} & = D_{n,\alpha}^{-1/2}\sum_{k=1}^\infty \frac{1}{k^{2\alpha+1}+n} (\zeta_{n,k}^2-1) =: \sum_{k=1}^\infty X_{n,k},
\end{align*}
where $\zeta_{n,k} \sim N(0,1)$ are independent and$$X_{n,k} = D_{n,\alpha}^{-1/2}(\zeta_{n,k}^2-1)/(k^{2\alpha+1}+n)$$are also independent. Recalling the exact definition of $D_{n,\alpha}$ from Theorem 1 of \cite{Fr}, $D_{n,\alpha} = 2\sum_{k=1}^\infty (k^{2\alpha+1}+n)^{-2} \geq \tfrac{1}{2}\sum_{k=1}^{n^{1/(2\alpha+1)}} n^{-2} =\tfrac{1}{2} n^{-2+\frac{1}{2\alpha+1}}$. Letting $\lambda_r = \E|\zeta_{n,k}^2-1|^r$,
\begin{align*}
\sum_{k=1}^\infty \E |X_{n,k}|^3 & \leq D_{n,\alpha}^{-3/2} \lambda_3 \left( \sum_{k=1}^{n^{1/(2\alpha+1)}} \frac{1}{n^3} + \sum_{k>n^{1/(2\alpha+1)}} k^{-6\alpha-3} \right) \\
& \leq  2(2n^{2-\frac{1}{2\alpha+1}})^{3/2} \lambda_3 n^{-3+\frac{1}{2\alpha+1}} = 2^{5/2} \lambda_3 n^{-\frac{1}{4\alpha+2}} .
\end{align*}
Let $F_{n,\alpha}$ and $\Phi$ denote the cdfs of $Z_{n,\alpha}$ and the standard normal distribution respectively. By the Berry-Esseen theorem for infinite arrays (Theorem 3.2 of \cite{Fridy2004} with summability matrix $p_{n,k}\equiv1$), there exists a universal constant $C_0$ such that
\begin{align*}
\|F_{n,\alpha} - \Phi\|_\infty \leq C_0 \sum_{k=1}^\infty \E |X_{n,k}|^3 \leq 2^{5/2} C_0 \lambda_3 n^{-\frac{1}{4\alpha+2}},
\end{align*}
which implies $\sup_{\alpha \in[\beta/2,a_n]} \|F_{n,\alpha} - \Phi\|_\infty \rightarrow 0$ since $a_n = o(\log n)$.

We have shown that
\begin{equation*}
\|f-\hat{f}_{n,\alpha}\|_2^2 = (C_\alpha+o(1))n^{-\frac{2\alpha}{2\alpha +1}} + (\sqrt{D_\alpha}+o(1))n^{-\frac{2\alpha+1/2}{2\alpha+1}} Z_{n,\alpha},
\end{equation*}
where all $o(1)$ terms and the convergence in distribution are uniform over $\alpha\in [\beta/2,a_n]$. We consequently see that the remainder term in the posterior probability $\Pi_{\alpha,n}^{(2)}$ only changes this probability by $o(1)$ for any $\alpha \in [\beta/2,a_n]$ and so $\Pi_{\hat{\alpha}_n,n}^{(2)} = (1-\gamma) + o_{\P_0}(1)$.
\end{proof}

\begin{proof}[Proof of Theorem \ref{L_infty asymptotic independence}]
Throughout we will write $T_n$ instead of $T_n^{(2)}$ for convenience, where
\begin{eqnarray*}
T_{n,jk}^{(2)} = \begin{cases} 
       \hat{f}_{n,jk} & \textrm{ if $j \leq j_0(n)$,} \\
      Y_{jk} 1_{ \{ \tilde{f}_{jk} \neq 0 \} } & \textrm{ if $ j_0(n) < j \leq \lfloor \log n/ \log 2 \rfloor $} .\\
      0 & \textrm{ if $\lfloor \log n/ \log 2 \rfloor < j$},\\
   \end{cases} 
\end{eqnarray*}
and $\hat{f}_n$ denotes the posterior mean of the slab and spike procedure. Since the second constraint in \eqref{eq14} is satisfied with posterior probability $1 - o_{\P_0} (1)$ uniformly over $f_0 \in \mathcal{H}(\beta,R)$ by the proof of Proposition \ref{credible band}, it suffices to prove the theorem with $D_n$ in \eqref{M credible set} instead of $\overline{D}_n$. Let $f_0 \in \mathcal{H} (\beta,R)$ and $j_n \rightarrow \infty$ satisfy $j_n \leq j_0(n)$, $w_{j_n} \leq \sqrt{j_0(n)}$ and $w_{j_n}^2 2^{j_n} = o(2^{j_0(n)})$. Similarly to Theorem \ref{l2 asymptotic independence}, we shall decompose both credible balls according to the threshold $j_n$. For this we must understand the typical sizes of the projections of $f-T_n$ in both norms under the posterior.

Consider firstly $\| \cdot \|_{\mathcal{M}}$. For the frequencies $l > j_0(n)$,
\begin{equation}
\snorm{\pi_{>j_0(n)} (f - T_n)}_\mathcal{M} \leq \snorm{\pi_{>j_0(n)} (f - f_0)}_\mathcal{M} + \snorm{\pi_{>j_0(n)} (f_0 - \mathbb{Y})}_\mathcal{M} + \snorm{\pi_{>j_0(n)} (\mathbb{Y} - T_n)}_\mathcal{M}  .
\label{eq30}
\end{equation}
The third term is $O_{\P_0}(w_{j_0(n)}^{-1} \sqrt{(\log n)/n} )$ by the proof of Lemma \ref{efficient estimator lemma}. For the first term, on the event $A_n$ defined in \eqref{eq9},
\begin{equation*}
\begin{split}
\max_{l > j_0(n)} w_l^{-1} \max_k |f_{lk}-f_{0,lk} |  & \leq \max_{j_0(n)< l \leq J_n(\beta)} w_l^{-1} \max_k |f_{lk}-f_{0,lk} | + \max_{l > J_n(\beta)} w_l^{-1} \max_k |f_{0,lk} | \\
& \leq w_{j_0(n)}^{-1} \sqrt{(\log n)/n} + w_{J_n(\beta)}^{-1} \sqrt{(\log n)/n}\\
&  = O(w_{j_0(n)}^{-1} \sqrt{(\log n)/n} ).
\end{split}
\end{equation*}
For the second term in \eqref{eq30},
\begin{equation*}
\E_0 \snorm{\pi_{>j_0(n)} (f_0 - \mathbb{Y})}_{\mathcal{M}(w)} \leq \frac{\sqrt{j_0(n)}}{w_{j_0(n)}\sqrt{n}}  \E_0 \snorm{\pi_{>j_0(n)} (\mathbb{Z}) }_{\mathcal{M}(\sqrt{l})} = O \left( \frac{\sqrt{j_0(n)}}{w_{j_0(n)}\sqrt{n}} \right)
\end{equation*}
using that $\E_0 \snorm{\mathbb{Z}}_{\mathcal{M}(\sqrt{l})}$ is finite by Proposition 2 of \cite{CaNi2}. Combining these yields
\begin{equation*}
\Pi (  f : \snorm{\pi_{>j_0(n)} (f - T_n)}_\mathcal{M} = O(w_{j_0(n)}^{-1} \sqrt{(\log n)/n}) \mid Y) = 1 - o_{\P_0} (1) 
\end{equation*}
since $j_0(n) \lesssim \log n$.

Recall that by definition $T_{n,lk}^{(2)}$ equals the posterior mean for $l \leq j_0(n)$ (see \eqref{efficient estimator2}), so that
\begin{equation*}
f_{lk} - T_{n,lk}^{(2)} |Y_{lk} \sim N\left( 0, \frac{\tau^2}{1+n\tau^2} \right), \quad \quad 0 \leq l \leq j_0(n),
\end{equation*}
under the posterior. For $(\zeta_{lk})$ i.i.d. standard Gaussians, we have the well-known bound $\E \max_{0\leq k <2^l} |\zeta_{lk}| \leq C \sqrt{l}$ for some universal constant $C$. Applying the Borell-Sudakov-Tsirelson inequality \cite{Led} to the maximum at level $l$ yields that for $M>0$ large enough,
\begin{align*}
\Pi (f : & \| (\pi_{>j_n} - \pi_{>j_0(n)})(f-T_n) \|_\mathcal{M} \geq M \sqrt{j_n} w_{j_n}^{-1} n^{-1/2} | Y) \\
& = \P \left( \max_{j_n < l \leq j_0(n)} \frac{1}{w_l} \max_{0\leq k < 2^l} \left| \frac{\tau}{\sqrt{1+n\tau^2}} \zeta_{lk} \right| \geq \frac{M\sqrt{j_n}}{w_{j_n}\sqrt{n}} \right) \\
& \leq \sum_{l=j_n+1}^{j_0(n)} \P \left( \max_{0\leq k < 2^l} |\zeta_{lk}| - \E  \max_{0\leq k < 2^l} |\zeta_{lk}| >  \frac{M\sqrt{j_n} w_l \sqrt{1+n\tau^2}}{w_{j_n} \sqrt{n}\tau} - \E  \max_{0\leq k < 2^l} |\zeta_{lk}| \right) \\
& \leq 2 \sum_{l=j_n+1}^{j_0(n)} \exp \left( -c \left (M \frac{w_l \sqrt{j_n}}{w_{j_n} \sqrt{l}} - C\right)^2 l  \right) \leq C e^{-c'j_n} \rightarrow 0.
\end{align*}
Combining this with the above inequalities gives
\begin{equation}
\Pi ( f : \snorm{\pi_{>j_n} (f - T_n)}_\mathcal{M} = O(w_{j_0(n)}^{-1} \sqrt{(\log n)/n}) \mid Y) = 1 - o_{\P_0} (1) .
 \label{eq31}
\end{equation}
We now show that the $\|\cdot \|_\mathcal{M}$-norm of the remaining frequencies $j\leq j_n$ is of strictly larger size with high probability. For any $u_n \rightarrow 0$,
\begin{equation}
\Pi ( f : \snorm{\pi_{j_n} (f - T_n)}_\mathcal{M} \leq w_0 u_n n^{-1/2} \mid Y) \leq \Pi ( f : |\zeta_{00}| \leq c u_n \mid Y) \leq 2ceu_n/\sqrt{2\pi} = o(1).
\end{equation}
In particular, taking $u_n \gg\sqrt{\log n}/w_{j_0(n)}$ gives the result.

Turn now to $\|\cdot \|_{B_{\infty1}^0}$. For $f \in D_n$,
\begin{equation}
\begin{split}
\snorm{\pi_{j_n}(f - T_n ) }_{B_{\infty1}^0} & = \sum_{l=0}^{j_n} 2^{l/2} \max_k |f_{lk} - T_{n,lk} | \leq \sum_{l=0}^{j_n} 2^{l/2} w_l \frac{R_n}{\sqrt{n}}  =  O_{\P_0} \left(  \frac{w_{j_n} 2^{j_n/2}}{\sqrt{n}} \right) .
\label{eq32}
\end{split}
\end{equation}
Note that 
\begin{equation}
\Pi ( f: \|\pi_{>j_n}(f-T_n)\|_{B_{\infty1}^0} \leq c 2^{j_0(n)/2} \sqrt{j_0(n)/n} \mid Y) \leq \P \left( \max_{0\leq k < 2^{j_0(n)}} \left| \zeta_{j_0(n) k} \right| \geq c' \sqrt{j_0(n)} \right) .
\label{eq35}
\end{equation}
Using again the Borell-Sudakov-Tsirelson inequality as above gives that the right-hand side is $O(2^{-c''j_0(n)})$ for $c>0$ small enough. Combining \eqref{eq31}-\eqref{eq35}, we have shown that for some $C_1,...,C_4>0$ and $u_n \rightarrow 0$ such that $u_n \gg\sqrt{\log n}/w_{j_0(n)}$,
\begin{equation*}
\begin{split}
\bar{A}_n := \Big\{ & \snorm{\pi_{>j_n} (f - T_n)}_\mathcal{M} \leq C_1 \frac{\sqrt{\log n}}{w_{j_0(n)} \sqrt{n}}, \quad \|\pi_{j_n}(f-T_n)\|_\mathcal{M} \geq C_2 \frac{u_n}{\sqrt{n}},   \\
& \quad \snorm{\pi_{j_n}(f - T_n ) }_{B_{\infty1}^0}  \leq  C_3 \frac{w_{j_n} 2^{j_n/2}}{\sqrt{n}}, \quad  \|\pi_{>j_n}(f-T_n)\|_{B_{\infty1}^0} \geq C_4 \frac{\sqrt{j_0(n)} 2^{j_0(n)/2} }{\sqrt{n}}  \Big\}
\end{split}
\end{equation*}
satisfies $\Pi (\bar{A}_n|Y) = 1-o_{\P_0}(1)$.

Write $\delta_n := w_{j_n}2^{j_n/2}/\sqrt{n}$. Note that by the previous display and the assumptions on the growth of $j_n\rightarrow \infty$, the radius $\bar{Q}_n(\gamma)$ of the credible set $D_n^{L^\infty}$ satisfies $\bar{Q}_n \gtrsim \sqrt{j_0(n)}2^{j_0(n)}/\sqrt{n} \gg \delta_n$ with $\P_0$-probability tending to one. Using the independence of the different coordinates under the posterior, the probability $\Pi (D_n \cap D_n^{L^\infty} \mid Y)$ equals
\begin{equation*}
\begin{split}
& \Pi \left(  \left\{ \snorm{f - T_n}_\mathcal{M} \leq R_n / \sqrt{n} , \quad  \snorm{f - T_n}_\infty \leq  \bar{Q}_n \right\}  \cap \bar{A}_n \mid  Y \right)  + o_{\P_0} (1)  \\
& = \Pi \left(  \left\{ \snorm{ \pi_{j_n} (f - T_n) }_\mathcal{M}   \leq (R_n + O(w_{j_0(n)}^{-1} \sqrt{\log n}))/ \sqrt{n}  \right. \right. ,  \\
& \quad \quad \quad \quad \left. \left. \snorm{\pi_{>j_n} (f - T_n) }_{B_{\infty1}^0}    \leq  \bar{Q}_n   + O(\delta_n)  \right\}  \cap \bar{A}_n \mid  Y \right)  + o_{\P_0} (1)  \\
& = \Pi \left( f:  \snorm{\pi_{j_n} (f - T_n) }_\mathcal{M} \leq (R_n + o(1)) / \sqrt{n} \mid Y \right) \\
& \quad \times \Pi \left( f: \snorm{ \pi_{>j_n} (f - T_n) }_{B_{\infty1}^0}  \leq  \bar{Q}_n + O(\delta_n)  \mid  Y \right)  + o_{\P_0} (1)  .
\end{split}
\end{equation*}
Again using that $\Pi (\bar{A}_n|Y) = 1-o_{\P_0}(1)$, the final line equals
\begin{equation}
\begin{split}
& \Pi \left( f :  \snorm{f - T_n}_\mathcal{M} \leq (R_n+o(1)) / \sqrt{n} \right) \\
& \quad \quad \times  \Pi \left( f:  \snorm{f - T_n}_{B_{\infty1}^0}  \leq  \bar{Q}_n + O(\delta_n) \mid  Y \right)  + o_{\P_0} (1).
\label{eq34}
\end{split}
\end{equation}

We now check that the above product has asymptotically the correct posterior probability. By the same argument as in Theorem \ref{l2 asymptotic independence} (replacing $H(\delta)$ by $\mathcal{M}$), the first probability equals $(1-\gamma)+o_{\P_0}(1)$. Setting $M_l = 2^{l/2} \max_{0\leq k < 2^l} |f_{lk} - T_{n,lk}|$, we have $\|f-T_n\|_{B_{\infty1}^0}  = \sum_l M_l$, where the $(M_l)$ are independent due to the product structure of the posterior. We can therefore write the posterior density of $\|f-T_n\|_{B_{\infty1}^0} $ as $h_n*G_n$, where $M_{j_0(n)}$ has density $h_n$ and $\sum_{l\neq j_0(n)} M_l$ has probability distribution $G_n$, both supported on $[0,\infty)$.

Using standard extreme value theory, we can establish the limiting distribution of $M_{j_0(n)}$. For $Z_i \sim N(0,1)$ independent, we have $a_m^{-1} (\max_{1\leq i \leq m} |Z_i|-b_m)$ converges in distribution to the standard Gumbel distribution (i.e. distribution function $F(x) = \exp(-e^{-x})$), where
\begin{equation*}
a_m = \frac{1}{\sqrt{2\log 2m}}, \quad \quad b_m = \sqrt{2\log 2m} - \frac{\log (4\pi \log 2m)}{2\sqrt{2\log 2m}} + O\left( \frac{1}{\log m} \right)
\end{equation*}
($b_m$ is the solution to $2m^2 = \pi b_m^2 e^{b_m^2}$). This follows from Theorem 10.5.2(c) and Example 10.5.3 of \cite{DaNa} with only minor modifications due to the absolute values within the maximum (intuitively it is the same as the maximum of $2m$ standard Gaussians). Moreover, by P\'olya's Theorem (p. 265 of \cite{chow1988}) the convergence of the distribution functions is uniform:
\begin{equation}
\sup_{x\in\R} | (\Phi (a_mx + b_m) - \Phi(-a_mx-b_m))^m - \exp(-e^{-x})| \rightarrow 0,
\label{eq33}
\end{equation}
where we recall $|Z_i|$ has distribution function $\Phi(x)-\Phi(-x)$.

Recall that $M_{j_0(n)}$ is the sum of i.i.d. (rescaled) Gaussians under the posterior. Using that $\bar{Q}_n$ is the $(1-\gamma)$-posterior quantile for $\|f-T_n\|_{B_{\infty1}^0}$,
\begin{align*}
|\Pi  ( f: & \snorm{f - T_n}_{B_{\infty1}^0} \leq  \bar{Q}_n + O(\delta_n) \mid  Y ) - (1-\gamma)|  \\
& \leq \Pi \Big( \bar{Q}_n - c\delta_n \leq \sum_l M_l \leq \bar{Q}_n + c\delta_n \Big| Y \Big) \\
& = \int_{\bar{Q}_n-c\delta_n}^{\bar{Q}_n+c\delta_n} \int_0^\infty h_n(x-y) dG_n(y) dx \\
& = \int_0^\infty \int_{\bar{Q}_n-y-c\delta_n}^{\bar{Q}_n-y+c\delta_n} h_n(z) dG_n(y) \\
& \leq \sup_{t\geq 0} \P ( M_{j_0(n)} \in [t-c\delta_n ,t + c\delta_n] ).
\end{align*}
Using the limiting distribution of $M_{j_0(n)}$, \eqref{eq33} and that the maximum of the standard Gumbel density function is $e^{-1}$, the last probability equals
\begin{align*}
\P & \left(\frac{2^{j_0(n)/2}\tau}{\sqrt{1+n\tau^2}} \max_{0\leq k <2^{j_0(n)}} |\zeta_{j_0(n) k}| \in [t-c\delta_n ,t+c\delta_n ] \right) \\
& \quad = \P \left( a_{2^{j_0(n)}}^{-1} \left( \max_{0\leq k <2^{j_0(n)}} |\zeta_{j_0(n) k}| - b_{2^{j_0(n)}} \right) \in a_{2^{j_0(n)}}^{-1} \frac{\sqrt{1+n\tau^2}}{2^{j_0(n)/2}\tau} [t-b_{2^{j_0(n)}}-c\delta_n ,t-b_{2^{j_0(n)}}+c\delta_n ]\right) \\
& \quad \leq \P \left( \text{Gumbel}(0,1) \in a_{2^{j_0(n)}}^{-1} \frac{\sqrt{1+n\tau^2}}{2^{j_0(n)/2}\tau} [t-b_{2^{j_0(n)}}-c\delta_n ,t-b_{2^{j_0(n)}}+c\delta_n ]\right) + o(1) \\
& \quad \leq c' e^{-1} \sqrt{\frac{nj_0(n)}{2^{j_0(n)}} }  \delta_n + o(1) = c' e^{-1} w_{j_n} \sqrt{j_0(n)} 2^{\frac{j_n-j_0(n)}{2}} + o(1) \rightarrow 0,
\end{align*}
by the choice of $j_n$. In conclusion, we have shown that the second probability in \eqref{eq34} equals $(1-\gamma) + o_{\P_0} (1)$. This completes the proof.
\end{proof}

\subsection{Remaining proofs}

\begin{proof}[Proof of Proposition \ref{Doob result}]
Fix $\rho > 1$, let $\varepsilon = \varepsilon (\alpha,\rho,R) < (1 - \rho^{-2\alpha}) / (2\alpha R)$ be sufficiently small so that $\varepsilon \in (0,1)$ and consider the events $A_{\alpha,N} = \{ \sum_{k=N}^{ \lceil \rho N \rceil } f_k^2 < \varepsilon R N^{-2\alpha} \} $. By a simple integral comparison we have that $\sum_{k=N}^{ \lceil \rho N \rceil } k^{-2\alpha-1} \geq (2\alpha)^{-1} N^{-2\alpha} (1 - \rho^{-2\alpha} )$, so that under the conditional prior,
\begin{equation*}
\begin{split}
\Pi_\alpha (A_{\alpha,N} ) & = \P \left( \sum_{k=N}^{ \lceil \rho N \rceil  }  k^{-2\alpha-1} g_k^2 < \varepsilon R N^{-2\alpha} \right) \\
& \leq \P \left( \sum_{k=N}^{ \lceil \rho N \rceil } k^{-2\alpha-1} (g_k^2 - 1) < \varepsilon R N^{-2\alpha} - \frac{1}{2\alpha} N^{-2\alpha}  ( 1 - \rho^{-2\alpha} )  \right)  \\
& \leq \P \left( \sum_{k=N}^{ \lceil \rho N \rceil } k^{-2\alpha-1} (g_k^2 - 1) <  - \varepsilon' N^{-2\alpha}   \right)  ,
\end{split}
\end{equation*}
where the $g_k$'s are i.i.d. standard normal random variables and $\varepsilon ' > 0$ (by the choice of $\varepsilon$). By (4.2) of Lemma 1 of \cite{LaMa} we have the exponential inequality
\begin{equation*}
\P \left( \sum_{k=N}^{\lceil \rho N \rceil} k^{-2\alpha-1} (g_k^2 -1) \leq -2 \left( \sum_{k=N}^{ \lceil \rho N \rceil } k^{-4\alpha -2} \right)^{1/2} \sqrt{x}  \right) \leq e^{-x}  .
\end{equation*}
For $N \geq 2$, again by an integral comparison we have that $\sum_{k=N}^{ \lceil \rho N \rceil } k^{-4\alpha -2} \leq C(\alpha) N^{-4\alpha -1} $. Using this and letting $x = MN$, the exponential inequality becomes
\begin{equation*}
\P \left(   \sum_{k=N}^{\lceil \rho N \rceil} k^{-2\alpha-1} (g_k^2 -1) \leq - C'(\alpha) \sqrt{M} N^{-2\alpha}  \right) \leq e^{-MN}  .
\end{equation*}
Taking $M$ sufficiently small so that $C' (\alpha) \sqrt{M} < \varepsilon '$, we obtain that $\Pi_\alpha ( A_{\alpha , N} ) \leq e^{-MN}$. Since this sequence is summable in $N$, the result follows from the first Borel-Cantelli Lemma.
\end{proof}

\begin{proof}[Proof of Proposition \ref{negative BvM in slab and spike}]
Under the law $\P_0$, $\sqrt{n} \E_0 \snorm{\mathbb{Y} - f_0}_{\mathcal{M}(w)} = \E_0 \snorm{\mathbb{Z}}_{\mathcal{M}(w)} < \infty$ by Proposition 2 of \cite{CaNi2}. By the triangle inequality it therefore suffices to show the conclusion of Proposition \ref{negative BvM in slab and spike} with $\mathbb{Y}$ replaced by $f_0$. Rewrite the multiscale indices $\Lambda = \{ (l,k) : l \geq 0 , k = 0,...,2^l - 1 \}$ in increasing lexicographic order, so that $\Lambda = \{ (l_m , k_m) : m \in \mathbb{N} \}$, where
\begin{equation*}
\begin{split}
& l_m = i , \quad \quad \quad \quad \quad if \quad 2^i \leq m < 2^{i+1} , \quad  i = 0,1,2,... , \\
& k_m = m - 2^i , \quad \quad \;  if \quad  2^i \leq m < 2^{i+1} , \quad i=0,1,2,...
\end{split}
\end{equation*}
Consider a strictly increasing subsequence $(n_m)_{m \geq 1}$ of $\mathbb{N}$ such that $(\log n_m) / w_{l_m}^2 \rightarrow \infty$ as $m \rightarrow \infty$ (such a subsequence can be constructed for any admissible $(w_l)$ since $w_l \nearrow \infty$). Define a function $f_0 \in \ell_2$ via its wavelet coefficients
\begin{equation*}
\langle f_0 , \psi_{l_m k_m} \rangle = r \sqrt{\log n_m / n_m}  ,
\end{equation*}
where $r \leq \underline{\gamma}$ for $\underline{\gamma}$ the value given in the proof of Theorem \ref{slab and spike contraction}. Since
\begin{equation*}
2^{l_m (\beta + 1/2)} | \langle f_0 , \psi_{l_m k_m} \rangle | \leq  r m^{\beta + 1/2} \sqrt{\frac{\log n_m}{n_m}} ,
\end{equation*}
we can ensure $f_0$ is in any given H\"older ball $\mathcal{H}(\beta,R)$ by letting $r$ be sufficiently small and taking the subsequence $n_m$ to grow fast enough. Consider now a further subsequence, removing terms corresponding to one index per resolution level, say $(l,k_l)$ (i.e. removing terms with indices $m = 2^l + k_l$, $l=0,1,2,...$, from the above subsequence), and set $| \langle f_0, \psi_{lk_l} \rangle | = R 2^{-l(\beta+1/2)}$. Using the Besov space embedding $L^\infty \subset B_{\infty \infty}^0$,
\begin{equation*}
\begin{split}
\norm{K_j(f) - f}_\infty & \geq  C(\psi) \max_{l > j} 2^{l/2} \max_k |\langle f_0, \psi_{lk} \rangle |  \\
&  \geq C(\psi) 2^{(j+1)/2} | \langle f_0 ,  \psi_{(j+1) k_{j+1} } \rangle |  = C(\psi) R 2^{-\beta} 2^{-j\beta} = \varepsilon (\beta,R,\psi) 2^{-j\beta} ,
\end{split}
\end{equation*}
thereby establishing that $f_0 \in \mathcal{H}_{SS}(\beta,R,\varepsilon)$.

Let $A_n$ denote the event defined in \eqref{eq9}. We have that on $A_{n_m}$, the posterior distribution $\Pi ' (\cdot \mid Y^{(n_m)})$ assigns the $(l_m,k_m)$ coordinate to the Dirac mass component of the distribution. Consequently, by the choice of $(n_m)$,
\begin{equation*}
\begin{split}
\E_0 \Pi ' ( & \norm{f - f_0}_\mathcal{M} \leq M_{n_m} {n_m}^{-1/2}  \mid Y^{(n_m)} )  \\
&  = \E_0 \Pi ' ( \{ \norm{f - f_0}_\mathcal{M} \leq M_{n_m} {n_m}^{-1/2} \} \cap A_{n_m} \mid Y^{(n_m)} ) + o(1)  \\
& \leq \E_0 \Pi ' ( \{ |f_{l_m k_m} - f_{0,l_m k_m}| \leq M_{n_m} w_{l_m} {n_m}^{-1/2}  \} \cap A_{n_m} \mid Y^{(n_m)}   )  + o(1)  \\
& = \E_0 \Pi ' ( \{ r \sqrt{\log n_m / n_m} \leq M_{n_m} w_{l_m}  n_m^{-1/2} \} \cap A_{n_m} \mid Y^{(n_m)} )  + o(1) \\
& \leq \E_0 \Pi ' (  r \sqrt{ \log n_m} /w_{l_m} \leq M_{n_m}  \mid Y^{(n_m)} ) + o(1) = o(1)
\end{split}
\end{equation*}
for any sequence $M_n$ such that $M_{n_m} = o (w_{l_m}^{-1} \sqrt{\log n_m})$ as $m \rightarrow \infty$.
\end{proof}

\section{Technical facts and results}

\subsection{Results for $\ell_2$-setting}

The following two lemmas describe the behaviour of the posterior mean of the empirical Bayes procedure. The first says that the posterior mean is a consistent estimator of $f_0$ in a sequence of Sobolev norms with data driven exponent. In particular, we are interested in the case $\epsilon_n \rightarrow 0$ when the Sobolev exponent tends to the true smoothness $\beta$. Note that we require $\epsilon_n$ strictly positive since the posterior mean is itself not an element of $H^{\hat{\alpha}_n}$. The second says that $\hat{f}_n$ is an efficient estimator of $f_0$ in $H_2^{-1/2,\delta}$. Both proofs are similar to that of Theorem 2 of \citep{KnSzVVVZ} and are thus omitted.

\begin{lemma}\label{coverage lemma}
Let $\hat{f}_n$ denote the posterior mean of the empirical Bayes procedure and let $\epsilon_n > 0$. Then for every $\beta,R>0$ and $M_n \rightarrow \infty$, we have
\begin{equation*}
\sup_{f_0 \in \mathcal{Q}_{SS} (\beta,R,\varepsilon)} \P_0 \left( \snorm{\hat{f}_n - f_0}_{H^{\hat{\alpha}_n - \epsilon_n}} \geq M_n \right) \rightarrow 0 
\end{equation*}
as $n \rightarrow \infty$.
\end{lemma}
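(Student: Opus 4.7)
The plan is to decompose the error into bias and variance in the coordinate basis and bound each in the data-dependent Sobolev norm. Since the empirical Bayes posterior mean has the closed form $\hat{f}_{n,k} = nY_k/(k^{2\hat{\alpha}_n+1}+n)$, substituting $Y_k = f_{0,k} + n^{-1/2}Z_k$ gives
\begin{equation*}
\hat{f}_{n,k} - f_{0,k} = -\frac{k^{2\hat{\alpha}_n+1}}{k^{2\hat{\alpha}_n+1}+n} f_{0,k} + \frac{\sqrt{n}}{k^{2\hat{\alpha}_n+1}+n}Z_k.
\end{equation*}
Because $M_n \to \infty$ is arbitrary, it suffices to prove that $\snorm{\hat{f}_n - f_0}_{H^{\hat{\alpha}_n-\epsilon_n}}^2 = O_{\P_0}(1)$, uniformly over $\mathcal{Q}_{SS}(\beta,R,\varepsilon)$. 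The crucial preparatory input is Lemma \ref{smoothness lemma}, which under self-similarity concentrates $\hat{\alpha}_n$ within a window of width $O(1/\log n)$ around $\beta$; restricting to the event $\mathcal{A}_n = \{|\hat{\alpha}_n - \beta| \leq c/\log n\}$ costs only a vanishing $\P_0$-probability, uniformly over the parameter class, and reduces us to a narrow range of admissible exponents.

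For the bias term, I use $f_{0,k}^2 \leq Rk^{-2\beta-1}$ from $f_0 \in \mathcal{Q}(\beta,R)$ together with the elementary estimate $k^{2(2\hat{\alpha}_n+1)}/(k^{2\hat{\alpha}_n+1}+n)^2 \leq \min(k^{4\hat{\alpha}_n+2}/n^2,\,1)$. Splitting the sum at the oracle cutoff $k_n \sim n^{1/(2\hat{\alpha}_n+1)}$ reduces the bias to two explicit polynomial sums, whose exponents coincide with the standard fixed-$\alpha=\beta$ computation up to $O(1/\log n)$ corrections on $\mathcal{A}_n$. Since $n^{O(1/\log n)} = O(1)$, both tails are bounded, uniformly in $\mathcal{Q}_{SS}(\beta,R,\varepsilon)$. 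For the variance term, I pass on $\mathcal{A}_n$ to the uniform upper bound
\begin{equation*}
V_n^2 \leq \sup_{|\alpha - \beta| \leq c/\log n} \sum_{k \geq 1} k^{2(\alpha - \epsilon_n)} \frac{n}{(k^{2\alpha+1}+n)^2} Z_k^2,
\end{equation*}
whose expectation (again split at $k_n$) is $O(1)$ by the same computation. Stochastic boundedness of the supremum then follows from a union bound over a polynomially fine grid of $\alpha$'s combined with continuity of the summand in $\alpha$ and a Gaussian quadratic-form concentration bound (e.g.\ Laurent--Massart); Markov's inequality then transfers the expectation bound to an $O_{\P_0}(1)$ statement.

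The main obstacle is that the Sobolev exponent $\hat{\alpha}_n - \epsilon_n$ is itself random, and if it exceeds $\beta$ the tail series $\sum_k k^{2(\hat{\alpha}_n - \epsilon_n) - 2\beta - 1}$ formally diverges. Self-similarity enters exactly at this point: the two-sided concentration in Lemma \ref{smoothness lemma} ensures that on $\mathcal{A}_n$ the overshoot satisfies $\hat{\alpha}_n - \epsilon_n - \beta = O(1/\log n)$, yielding a tail contribution $k_n^{-2(\beta - \hat{\alpha}_n + \epsilon_n)} = n^{O(1/\log n)} = O(1)$. This is also where the subsequent applications of the lemma tacitly require $\epsilon_n$ to dominate the fluctuation scale of $\hat{\alpha}_n$ (as enforced in Proposition \ref{EB credible set} by $\epsilon_n \geq r_1/\log n$), so that the margin absorbing the overshoot is of the right order and the uniformity over $\mathcal{Q}_{SS}(\beta,R,\varepsilon)$ is preserved.
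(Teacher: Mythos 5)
Your overall strategy --- the bias/variance decomposition of $\hat f_{n,k}-f_{0,k}$, restriction to the event $\mathcal{A}_n=\{|\hat\alpha_n-\beta|\le c/\log n\}$ via Lemma \ref{smoothness lemma}, and splitting the sums at the oracle cutoff $k_n\sim n^{1/(2\hat\alpha_n+1)}$ --- is the natural one and is what the paper intends (its own proof is omitted and deferred to the argument of Theorem 2.3 of Knapik et al.). The variance part of your argument is sound, although on $\mathcal{A}_n$ one can avoid the grid/union-bound step entirely by sandwiching the random exponent between two deterministic ones, using $k^{2(\hat\alpha_n-\epsilon_n)}\le e^{2c}k^{2\beta}$ for $k\le n$ and handling $k>n$ separately.

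The gap is in the high-frequency bias term. You assert that $\sum_{k>k_n}k^{2(\hat\alpha_n-\epsilon_n)}f_{0,k}^2\lesssim R\,k_n^{-2\delta}=O(1)$ with $\delta=\beta-\hat\alpha_n+\epsilon_n$, but the near-critical series satisfies $\sum_{k>k_n}k^{-2\delta-1}\asymp k_n^{-2\delta}/(2\delta)$: you have dropped the factor $1/(2\delta)$. On $\mathcal{A}_n$ and with $\epsilon_n\asymp 1/\log n$ (exactly the regime in which the lemma is invoked in Proposition \ref{EB credible set}), one has $\delta=O(1/\log n)$, so this factor is of order $\log n$ and the squared bias is genuinely of order $R\log n$ for, e.g., $f_{0,k}=\sqrt{R}\,k^{-\beta-1/2}$, which lies in $\mathcal{Q}_{SS}(\beta,R,\varepsilon)$. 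Hence your computation only yields $\snorm{\hat f_n-f_0}_{H^{\hat\alpha_n-\epsilon_n}}=O_{\P_0}(\sqrt{\log n})$, not the claimed $O_{\P_0}(1)$; the weaker bound suffices for the way the lemma is actually used (with $M_n=C\sqrt{\log n}$ to verify the second constraint of $\tilde C_n$) but not for the statement ``for every $M_n\to\infty$''. Relatedly, convergence of this series requires $\hat\alpha_n-\epsilon_n<\beta$ on the event, i.e.\ that $\epsilon_n$ exceeds the possible upward fluctuation $K_2/\log n$ of $\hat\alpha_n$ from Lemma \ref{smoothness lemma} (or that $K_2<0$); you correctly flag this but defer it to ``the applications'' rather than establishing it, so as written the argument does not cover an arbitrary $\epsilon_n>0$. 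To close the proof you should either track the $1/(2\delta)$ factor explicitly and prove the $\sqrt{\log n}$ version that is actually needed, or assume $\epsilon_n$ bounded below, in which case $\delta$ is bounded away from $0$ and the bias tail is $o(1)$.
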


\begin{lemma}\label{l2 efficient estimator}
Let $\hat{f}_n$ denote the posterior mean of the empirical Bayes procedure. Then for $f_0 \in \mathcal{Q}_{SS} (\beta,R,\varepsilon)$, $\delta > 1$ and as $n \rightarrow \infty$,
\begin{equation*}
\snorm{\hat{f}_n - \mathbb{Y}}_{H(\delta)} = o_{\P_0} (1/\sqrt{n}) .
\end{equation*}
\end{lemma}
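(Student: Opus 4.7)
The plan is to exploit the explicit product form of the empirical Bayes posterior. Since the posterior mean is $\hat{f}_{n,k} = \frac{n}{k^{2\hat{\alpha}_n+1}+n} Y_k$, one computes
\begin{equation*}
\snorm{\hat{f}_n - \mathbb{Y}}_{H(\delta)}^2 = \sum_{k} \frac{1}{k(\log k)^{2\delta}} \left(\frac{k^{2\hat{\alpha}_n+1}}{k^{2\hat{\alpha}_n+1}+n}\right)^2 Y_k^2,
\end{equation*}
so the goal reduces to showing that $n$ times this expression is $o_{\P_0}(1)$.

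The first step is to localize $\hat{\alpha}_n$. By Lemma \ref{smoothness lemma} (already invoked in the proof of Proposition \ref{EB credible set} to give $|\hat{\alpha}_n - \beta| = O_{\P_0}(1/\log n)$), under self-similarity the event $E_n = \{|\hat{\alpha}_n - \beta| \leq c/\log n\}$ has $\P_0$-probability tending to $1$ uniformly in $f_0 \in \mathcal{Q}_{SS}(\beta,R,\varepsilon)$. On $E_n$ the factor $k^{2\hat{\alpha}_n+1}$ is comparable to $k^{2\beta+1}$ up to a universal multiplicative constant for every $k \leq n$, since $k^{2c/\log n} \leq e^{2c}$. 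I would therefore bound the entire sum on $E_n$ by a constant times the analogous deterministic sum with $\hat{\alpha}_n$ replaced by $\beta$; this decouples the randomness of $\hat{\alpha}_n$ from $Y$ and permits taking $\E_0$ termwise.

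I would then apply the bias--variance split $\E_0 Y_k^2 = f_{0,k}^2 + 1/n$, using the hyperrectangle bound $f_{0,k}^2 \leq R k^{-2\beta-1}$. Splitting the resulting sum at the critical frequency $k_n^\ast \simeq n^{1/(2\beta+1)}$ and using $\frac{k^{2\beta+1}}{k^{2\beta+1}+n} \leq k^{2\beta+1}/n$ for $k \leq k_n^\ast$ together with the trivial bound $1$ for $k > k_n^\ast$, the four resulting pieces are elementary. The dominant contribution is the high-$k$ variance piece, which after multiplication by $n$ behaves like $\sum_{k > k_n^\ast} 1/(k(\log k)^{2\delta}) = O((\log n)^{-(2\delta-1)})$, which is $o(1)$ for $\delta > 1$; the other three pieces produce comparable or smaller terms of order $O((\log n)^{-2\delta})$ and $O(n^{-2\beta/(2\beta+1)}(\log n)^{-2\delta})$. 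A final application of Markov's inequality combined with $\P_0(E_n^c) = o(1)$ delivers the claimed $o_{\P_0}(1/\sqrt{n})$ rate.

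The main technical subtlety is the coupling between $\hat{\alpha}_n$ and the data $Y$, which prevents a naive interchange of $\E_0$ with the infinite sum. Localization to $E_n$ removes this obstacle by effectively replacing $\hat{\alpha}_n$ with a deterministic constant close to $\beta$. The self-similarity hypothesis is essential here: without it, $\hat{\alpha}_n$ could concentrate strictly below $\beta$, producing a systematic undersmoothing bias that would destroy the $1/\sqrt{n}$-efficiency in $H(\delta)$.
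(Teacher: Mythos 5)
Your argument is correct and, in fact, more self-contained than the paper's: the paper omits the proof of Lemma \ref{l2 efficient estimator} entirely, stating only that it is ``similar to that of Theorem 2.3 of [KnSzVVVZ]''. Your route --- write $\hat{f}_{n,k}-Y_k = -\tfrac{k^{2\hat{\alpha}_n+1}}{k^{2\hat{\alpha}_n+1}+n}Y_k$, localize $\hat{\alpha}_n$ to $|\hat{\alpha}_n-\beta|\le c/\log n$ via Lemma \ref{smoothness lemma}, replace $\hat{\alpha}_n$ by $\beta$ at the cost of the constant $e^{2c}$ on frequencies $k\le k_n^\ast\le n$, bound the shrinkage factor by $1$ above $k_n^\ast$, and then run the bias--variance computation with $\E_0 Y_k^2=f_{0,k}^2+1/n$ --- is exactly the kind of computation the cited reference performs, and all four pieces come out as you say, with the dominant high-frequency variance term of order $(\log n)^{-(2\delta-1)}=o(1)$. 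The decoupling of $\hat{\alpha}_n$ from $Y$ via the event $E_n$ before taking expectations is the one genuinely delicate point and you handle it correctly.

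One remark in your closing paragraph is backwards, though it does not affect the proof. Since $x\mapsto x/(x+n)$ is increasing, the dangerous regime for $\|\hat f_n-\mathbb{Y}\|_{H(\delta)}$ is $\hat{\alpha}_n$ \emph{too large} (oversmoothing): then the shrinkage factor is of order one already at low frequencies $k\sim n^{1/(2\hat{\alpha}_n+1)}$, where $f_{0,k}$ may still be of size $k^{-\beta-1/2}$, and the bias contribution $n\sum_{k\gtrsim n^{1/(2\hat\alpha_n+1)}}k^{-2\beta-2}(\log k)^{-2\delta}$ diverges. If $\hat{\alpha}_n$ concentrates \emph{below} $\beta$ the posterior mean shrinks less and sits closer to $\mathbb{Y}$, which is harmless for this lemma. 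Self-similarity is needed precisely to rule out $\hat{\alpha}_n\gg\beta$ (a signal that ``looks smoother than it is''), i.e.\ it supplies the upper bound $\hat{\alpha}_n\le\beta+K_2/\log n$, which is the only half of Lemma \ref{smoothness lemma} your argument actually uses.
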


We have an exponential inequality which measures posterior spread in a variety of Sobolev norms. Since for fixed $\alpha$, the posterior only depends on the data through the posterior mean $\hat{f}_{n,\alpha}$, the following probabilities are independent of the observed data $Y$.

\begin{lemma}\label{exponential lemma}
Let $\hat{f}_{n,\alpha}$ denote the posterior mean of $\Pi_\alpha ( \cdot \mid Y)$. Then for any $0 \leq s < \alpha$ and any $\eta >0$,
\begin{equation*}
\begin{split}
\Pi_\alpha &  \left( f : \left. \snorm{f - \hat{f}_{n,\alpha}}_{H^s}^2  \geq (1 + \eta) \left[ 1 + \frac{1}{2(\alpha-s)} \right] n^{-\frac{2(\alpha-s)}{2\alpha+1}}  \right| Y \right) \\
& \quad \quad \quad \quad \leq  e^{1/4} \exp \left( -\frac{\eta}{\sqrt{24}}  \left[ 1 + \frac{1}{2(\alpha-s)} \right] n^{1/(4\alpha+2)}  \right) .
\end{split}
\end{equation*}
\end{lemma}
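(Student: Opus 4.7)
The key observation is that the posterior $\Pi_\alpha(\cdot\mid Y)$ has independent Gaussian coordinates whose \emph{variances do not depend on $Y$}, by the explicit formula \eqref{eq28}. This means the probability in question is a deterministic tail bound, and the plan is to reduce it to a tail bound on a weighted sum of iid $\chi_1^2$ variables and then apply the Laurent--Massart exponential inequality.

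Concretely, from \eqref{eq28} the coordinates $(f_k - \hat{f}_{n,\alpha,k})_{k\geq 1}$ are independent $N(0,(k^{2\alpha+1}+n)^{-1})$ under the posterior, so, writing $\zeta_k\stackrel{iid}{\sim}N(0,1)$ and
$$a_k := \frac{k^{2s}}{k^{2\alpha+1}+n},$$
one has $\|f-\hat{f}_{n,\alpha}\|_{H^s}^2 \stackrel{d}{=} W := \sum_{k\geq 1} a_k \zeta_k^2$. I would then bound the three relevant norms of the sequence $(a_k)$ by splitting at the bias/variance crossover $k_0 := n^{1/(2\alpha+1)}$, using $a_k\le k^{2s}/n$ for $k\le k_0$ and $a_k\le k^{2s-2\alpha-1}$ for $k>k_0$. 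Elementary integral comparison gives, with $r_n := n^{-2(\alpha-s)/(2\alpha+1)}$,
$$\mathbb{E}[W] = \sum_k a_k \;\leq\; \left(\frac{1}{2s+1} + \frac{1}{2(\alpha-s)}\right) r_n \;\leq\; C\, r_n,$$
where $C=1+\tfrac{1}{2(\alpha-s)}$ (using $s\ge 0$ to absorb $1/(2s+1)$ into $1$), and analogously $\|a\|_2^2 \lesssim r_n^2\, n^{-1/(2\alpha+1)}$ and $\|a\|_\infty \lesssim r_n\, n^{-1/(2\alpha+1)}$.

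Next I would invoke the Laurent--Massart bound (Lemma 1 of \cite{LaMa}): for every $x\ge 0$,
$$P\bigl( W \geq \mathbb{E}[W] + 2\|a\|_2 \sqrt{x} + 2\|a\|_\infty x \bigr)\; \leq\; e^{-x}.$$
Choosing $x = \tfrac{\eta}{\sqrt{24}}\, C\, n^{1/(4\alpha+2)}$ and plugging in the bounds of the previous step, both deviation terms $2\|a\|_2\sqrt{x}$ and $2\|a\|_\infty x$ are of polynomial order $o(r_n)$ in $n$, so for $n$ large $\mathbb{E}[W]+2\|a\|_2\sqrt{x}+2\|a\|_\infty x \leq (1+\eta)Cr_n$; combining with the distributional identity above yields the stated tail bound, with the $e^{1/4}$ prefactor absorbing finite-$n$ slack.

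The main (and really only) obstacle is the bookkeeping of the numerical constants $\sqrt{24}$ and $e^{1/4}$. The stated bound is in fact far from tight in $n$: a direct Chernoff argument on the $\chi^2$ moment generating function would yield an exponent of order $n^{1/(2\alpha+1)}$, whereas the lemma only asserts $n^{1/(4\alpha+2)}=\sqrt{n^{1/(2\alpha+1)}}$. This generous gap is what makes the verification after choosing $x$ routine, since each deviation term in Laurent--Massart decays at a polynomial rate compared with the target $\eta C r_n$. The remaining work is the explicit choice of integral-comparison constants ensuring that the mean bound appears exactly in the form $(1+\eta)\bigl[1+\tfrac{1}{2(\alpha-s)}\bigr]r_n$ rather than a larger multiple.
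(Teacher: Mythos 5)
Your reduction is exactly the paper's: identify $\snorm{f-\hat f_{n,\alpha}}_{H^s}^2$ with the deterministic weighted $\chi^2$ sum $\sum_k a_k\zeta_k^2$, $a_k=k^{2s}/(k^{2\alpha+1}+n)$, bound the mean by splitting at $t_n=n^{1/(2\alpha+1)}$ to get $[1+\tfrac{1}{2(\alpha-s)}]n^{-2(\alpha-s)/(2\alpha+1)}$, and then apply a one-sided $\chi^2$ deviation inequality. The only substantive divergence is the concentration step: the paper uses the inequality of Proposition 6 of \cite{RoDu}, $\P(\sum_k a_k(\zeta_k^2-1)\geq \nu x)\leq e^{1/4}e^{-x/\sqrt 8}$ with $\nu^2=2\sum_k a_k^2\leq 3n^{-(4(\alpha-s)+1)/(2\alpha+1)}$, and chooses $x=(\eta/\sqrt3)[1+\tfrac{1}{2(\alpha-s)}]n^{1/(4\alpha+2)}$ so that the deviation $\nu x$ equals \emph{exactly} $\eta$ times the mean bound; both the $e^{1/4}$ prefactor and the $\sqrt{24}=\sqrt{3}\cdot\sqrt{8}$ then fall out identically for every $n$, making the lemma genuinely non-asymptotic. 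Your Laurent--Massart route gives the same exponent but only for $n$ large enough that $2\norm{a}_2\sqrt x+2\norm{a}_\infty x\leq \eta C r_n$; the claim that the $e^{1/4}$ prefactor ``absorbs finite-$n$ slack'' is not right as stated, since $e^{1/4}\approx 1.28$ is a fixed constant and the right-hand side of the lemma can be well below $1$ at moderate $n$ while your deviation terms still exceed $\eta Cr_n$. This costs nothing for the asymptotic applications in the paper, but if you want the lemma exactly as stated you should either use the variance-normalized inequality as the paper does, or restate your conclusion as holding for all $n\geq n_0(\alpha,s,\eta)$.
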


\begin{proof}
For $f \sim \Pi_\alpha (\cdot \mid Y)$ we can use the explicit form of the posterior mean in \eqref{eq28} to write
\begin{equation*}
\snorm{f - \hat{f}_{n,\alpha}}_{H^s}^2 = \sum_{k=1}^\infty \frac{k^{2s}}{ k^{2\alpha+1} + n } \zeta_k^2,
\end{equation*}
where the $\zeta_k \sim N(0,1)$ are independent. Letting $t_n = n^{1/(2\alpha + 1)}$ and using standard tail bounds,
\begin{equation*}
\begin{split}
\E^\Pi \left[ \snorm{f - \hat{f}_{n,\alpha} }_{H^s}^2 \mid Y \right] = \sum_{k=1}^\infty \frac{k^{2s}}{ k^{2\alpha+1} + n } & \leq \frac{1}{n} \sum_{k \leq t_n} k^{2s} + \sum_{k > t_n} k^{-2(\alpha -s) -1} \\
& \leq \left[ 1 + \frac{1}{2(\alpha-s)} \right] n^{-\frac{2(\alpha-s)}{2\alpha+1}} .
\end{split}
\end{equation*}
The posterior variance of $\snorm{f-\hat{f}_{n,\alpha} }_{H^s}^2$ is given by
\begin{equation*}
\nu^2 = 2 \sum_{k=1}^\infty \frac{k^{4s}}{(k^{2\alpha+1} +n)^2}  \leq \frac{2}{n^2} \sum_{k \leq t_n} k^{4s} + \sum_{k > t_n} k^{-4(\alpha-s) -2} \leq 3 n^{-\frac{4(\alpha-s) + 1}{2\alpha+1}} .
\end{equation*}
Combining the above with the exponential inequality for $\chi^2$-squared random variables found in Proposition 6 of \cite{RoDu}, we have
\begin{equation*}
\begin{split}
e^{1/4} e^{-x/\sqrt{8} } & \geq \P \left( \sum_{k=1}^\infty \frac{k^{2s}}{k^{2\alpha+1} + n} (\zeta_k^2 - 1) \geq \nu x \right)  \\
& \geq \Pi_\alpha \left( \left. f : \snorm{f - \hat{f}_{n,\alpha} }_{H^s}^2  \geq\left[ 1 + \frac{1}{2(\alpha-s)} \right] n^{-\frac{2(\alpha-s)}{2\alpha+1}} + \sqrt{3} n^{-\frac{4(\alpha-s) + 1}{4\alpha+2}} x  \right| Y  \right) .
\end{split}
\end{equation*}
Taking $x = (\eta/\sqrt{3}) [ 1 + 1/(2\alpha-2s)] n^{1/(4\alpha+2)}$ gives the desired result.
\end{proof}

\subsection{Results for $L^\infty$-setting}

To prove Proposition \ref{credible band} we need to understand the behaviour of the posterior median under the law $\P_0$.

\begin{lemma}\label{median lemma}
Let $\tilde{f} = \tilde{f}_n$ denote the posterior median (defined coordinate-wise) of the slab and spike prior. Then the event
\begin{equation}
\begin{split}
B_n  = \{ \tilde{f}_{lk} & = 0 \quad \forall (l,k) \in \mathcal{J}_n^c (\underline{\gamma})  \} \cap \{ \tilde{f}_{lk} \neq 0  \quad \forall  (l,k) \in  \mathcal{J}_n (\overline{\gamma} ' )  \}  \\
& \cap \{ \sqrt{n} | Y_{lk} - f_{0,lk}| \leq  (8 l \log 2  +  a \log n)^{1/2} \quad \forall l \leq J_n , \forall k= 0,...,2^l-1 \}   
\label{eq19}
\end{split}
\end{equation}
satisfies $\inf_{f_0 \in \mathcal{H} (\beta,R) } \P_0 (B_n ) \rightarrow 1$ as $n \rightarrow \infty$, for some constants $0 < \underline{\gamma} < \overline{\gamma}' < \infty$ and $a > 0$.
\end{lemma}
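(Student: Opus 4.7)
The plan is to handle the three events constituting $B_n$ separately, treating the (standard) Gaussian noise event first and then reducing the two events on the posterior median to a threshold analysis analogous to Lemma~1/Theorem~3.1 of \cite{HoRoSc}.

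First I would dispose of the third event, which is purely a statement about the white noise. Since $\sqrt{n}(Y_{lk}-f_{0,lk}) = Z_{lk}$ are i.i.d.\ $N(0,1)$, the Gaussian tail bound $\mathbb{P}(|Z|>t)\le 2e^{-t^2/2}$ with $t^2 = 8l\log 2 + a\log n$ yields at level $l$ a probability bounded by $2^{l+1}\cdot 2^{-4l}\,n^{-a/2} = 2\cdot 2^{-3l}n^{-a/2}$ after the union bound over the $2^l$ indices $k$. Summing over $l\le J_n$ gives an $O(n^{-a/2})$ bound that holds uniformly in $f_0$, so this event has $\mathbb{P}_0$-probability $1-o(1)$ for any fixed $a>0$.

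Second, I would exploit the coordinate-wise product structure of the prior to describe $\tilde{f}_{lk}$ explicitly. For $l\le j_0(n)$ the prior has no atom at $0$, hence neither does the posterior marginal and $\tilde{f}_{lk}\neq 0$ almost surely. For $j_0(n)<l\le J_n$, the posterior on the $(l,k)$-coordinate is a mixture $\bar w_{lk}(Y)\,\delta_0 + (1-\bar w_{lk}(Y))\,\Pi_{lk}^c(\cdot\mid Y)$ with continuous component $\Pi_{lk}^c$, where a direct computation (completing the square in the Gaussian exponent) gives
\begin{equation*}
\bar w_{lk}(Y) = \left(1+\frac{w_{ln}}{1-w_{ln}}\,e^{nY_{lk}^2/2}\int g(x)\,e^{-n(x-Y_{lk})^2/2}dx\right)^{-1}.
\end{equation*}
The posterior median $\tilde{f}_{lk}$ equals $0$ iff $\bar w_{lk}(Y)\ge 1/2$. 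Using the bounds $n^{-K}\le w_{ln}\le 2^{-l(1+\tau)}$ together with the fact that the integral above is $\asymp g(Y_{lk})\sqrt{2\pi/n}$ when $Y_{lk}$ lies in a bounded neighbourhood of $0$, one sees that the condition $\bar w_{lk}(Y)\ge 1/2$ is equivalent to $|Y_{lk}|\le \tau_{l,n}$ for a threshold $\tau_{l,n}$ of exact order $\sqrt{\log n/n}$, i.e.\ there exist finite constants $0<c_*<c^*<\infty$ (depending only on $g$, $K$, $\tau$) such that $c_*\sqrt{\log n/n}\le \tau_{l,n}\le c^*\sqrt{\log n/n}$ for all $j_0(n)<l\le J_n$ and all $n$ sufficiently large.

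Third, I would combine this threshold characterisation with the noise event. On the noise event, $|Y_{lk}|\le |f_{0,lk}| + \sqrt{(8l\log 2+a\log n)/n} \le |f_{0,lk}| + C_a\sqrt{\log n/n}$ for $l\le J_n$, and the reverse inequality $|Y_{lk}|\ge |f_{0,lk}| - C_a\sqrt{\log n/n}$ also holds. Choosing $\underline{\gamma}>0$ small enough so that $\underline{\gamma}+C_a<c_*$ forces $|Y_{lk}|<\tau_{l,n}$, hence $\tilde{f}_{lk}=0$, for every $(l,k)\in\mathcal{J}_n^c(\underline{\gamma})$; and choosing $\overline{\gamma}'$ large enough so that $\overline{\gamma}'-C_a>c^*$ forces $|Y_{lk}|>\tau_{l,n}$, hence $\tilde{f}_{lk}\neq 0$, for every $(l,k)\in\mathcal{J}_n(\overline{\gamma}')$ with $l>j_0(n)$ (and automatically for $l\le j_0(n)$). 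Combining with the first step gives $\mathbb{P}_0(B_n)\to 1$ uniformly in $f_0\in\mathcal{H}(\beta,R)$.

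The main obstacle is the sharp two-sided bound on the thresholding level $\tau_{l,n}$: one must verify that $\tau_{l,n}$ stays comparable to $\sqrt{\log n/n}$ uniformly across all levels $j_0(n)<l\le J_n$, despite the level-dependent mixing weight $w_{ln}$ and the fact that the continuous slab density $g$ enters the integral. This is essentially a univariate Laplace-type estimate and matches the analysis underlying Theorem~3.1 of \cite{HoRoSc}; the rest of the argument consists of routine coupling with the noise event and bookkeeping of the constants $\underline{\gamma},\overline{\gamma}',a$.
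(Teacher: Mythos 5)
Your overall architecture (dispose of the pure noise event by a union bound, then reduce the two median events to the posterior weight $\bar w_{lk}(Y)$ of the atom at zero) is in the right spirit, and your treatment of the third event matches the paper's appeal to Gaussian concentration. But both steps concerning the median contain genuine gaps. For the event $\{\tilde f_{lk}=0\ \forall (l,k)\in\mathcal J_n^c(\underline\gamma)\}$ your constants cannot be made to work. From $\int g(x)e^{-n(x-Y)^2/2}dx\le\snorm{g}_\infty\sqrt{2\pi/n}$ one gets $n\tau_{l,n}^2\ge 2\log(1/w_{ln})+\log n-O(1)$, and since $w_{ln}$ may equal $2^{-l(1+\tau)}$ at levels $l=j_0(n)+1=o(\log n)$, the uniform-in-$l$ constant $c_*$ is essentially $1$. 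Meanwhile $C_a=\max_{l\le J_n}\sqrt{(8l\log 2+a\log n)/\log n}\ge\sqrt 8$, because the noise event in \eqref{eq19} permits deviations of size roughly $\sqrt{(8+a)\log n/n}$ at the top levels. So $\underline\gamma+C_a<c_*$ is impossible; worse, the asserted deterministic inclusion is simply false, since at level $l\approx J_n$ the thresholding level is only about $\sqrt{(3+2\tau)\log n/n}$ (which is below $\sqrt{8\log n/n}$ whenever $\tau<5/2$, and the prior only assumes $\tau>1/2$), so on the stated noise event a small coordinate can exceed the threshold and be selected. The paper does not route this event through the noise bound at all: it uses $\{\tilde f_{lk}=0\ \forall(l,k)\in\mathcal J_n^c(\underline\gamma)\}\supseteq\{\Pi(S\cap\mathcal J_n^c(\underline\gamma)=\emptyset\mid Y)\ge 1/2\}$ and invokes Lemma 1 of \cite{HoRoSc} with Markov's inequality. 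A direct argument in your style would require a separate, level-dependent comparison with $2(1+\tau)l\log 2$ in place of $8l\log 2$ and a fresh union bound exploiting $2^l\cdot 2^{-(1+\tau)l}$ summability, i.e.\ an auxiliary event not contained in $B_n$.

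The second gap is the claim that $\tilde f_{lk}=0$ if and only if $\bar w_{lk}(Y)\ge 1/2$. Only the ``if'' direction is true: when $\bar w_{lk}(Y)<1/2$ the coordinate-wise median can still equal $0$ if the continuous component of the posterior places close to half its mass on each side of the origin, so $|Y_{lk}|>\tau_{l,n}$ alone does not yield $\tilde f_{lk}\neq 0$ for $(l,k)\in\mathcal J_n(\overline\gamma')$. You must additionally bound the continuous component's mass on the wrong side of zero; for $f_{0,lk}>0$ on the noise event this is $\Pi(f_{lk}<0\mid Y)\le C\sqrt n\,\bar\Phi(\sqrt n Y_{lk})$, which is polynomially small for $\overline\gamma'$ large — precisely display \eqref{eq22} in the paper's proof. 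This piece is fixable but is missing from your argument as written.
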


\begin{proof}
We show that the $\P_0$-probability of each of these events individually tends to 1. For the first event
\begin{equation*} 
\begin{split}
\{ \tilde{f}_{lk} = 0 \quad \forall (l,k) \in \mathcal{J}_n^c (\underline{\gamma}) \} &  \supseteq \{ \Pi ( f_{lk} = 0 \mid Y) \geq 1/2 \quad \forall (l,k) \in \mathcal{J}_n^c (\underline{\gamma})  \}  \\
& \supseteq \{ \Pi ( f_{lk} = 0 \quad \forall (l,k) \in \mathcal{J}_n^c ( \underline{\gamma} ) ) \geq 1/2 \} \\
& = \{ \Pi ( S \cap \mathcal{J}_n^c (\underline{\gamma}) = \emptyset ) \geq 1/2 \} .
\end{split}
\end{equation*}
By Lemma 1 of \cite{HoRoSc} the $\P_0$-probability of this last event tends to 1 for some $\underline{\gamma} > 0$ as $n \rightarrow \infty$.

Consider the third event,
\begin{equation*}
\Omega_n = \{ \sqrt{n} | Y_{lk} - f_{0,lk}| \leq  (8 l \log 2  +  a \log n)^{1/2} \quad \forall l \leq J_n , \forall k= 0,...,2^l-1 \}  ,
\end{equation*}
which by (41) of \cite{HoRoSc} (or the Borell-Sudakov-Tsireslon inequality \cite{Led}) satisfies $\P_0 (\Omega_n^c) \rightarrow 0$. We shall lastly show that 
\begin{equation}
\Omega_n \subset  \{ \tilde{f}_{lk} \neq 0  \quad \forall  (l,k) \in  \mathcal{J}_n (\overline{\gamma} ' ) \}  ,
\label{eq20}
\end{equation}
which then completes the proof.

Consider firstly the case $f_{0,lk} \in \mathcal{J}_n (\overline{\gamma}')$ with $f_{0,lk} > 0$. Write
\begin{equation}
\Pi ( f_{lk} \leq 0 \mid Y ) = \Pi ( f_{lk} = 0 \mid Y ) + \Pi ( f_{lk} < 0 \mid Y)  .
\label{eq21}
\end{equation}
By the proof of Lemma 1 of \cite{HoRoSc}, we have that on the event $\Omega_n$ and for sufficiently large $\overline{\gamma}'$, the first posterior probability in \eqref{eq21} is bounded above by a multiple of $n^{K + 1/2 - (\overline{\gamma}')^2 /8}$. Again on the event $\Omega_n$, we use (42) of \cite{HoRoSc} to bound the second term via
\begin{equation}
\begin{split}
\Pi ( f_{lk} < 0 \mid Y ) & = \frac{ w_{jn} \int_{-\infty}^0 e^{ -\frac{n}{2} (x-Y_{lk})^2 } g(x) dx }{ w_{jn} \int_{-\infty}^\infty  e^{ -\frac{n}{2} (x-Y_{lk})^2 } g(x) dx + (1- w_{j,n}) }   \\
& \leq   \frac{  \norm{g}_\infty  \int_{-\infty}^{-\sqrt{n} Y_{lk} } e^{-\frac{1}{2}v^2}  dv }{ a(\pi /n)^{1/2} }  = C \sqrt{n} \bar{\Phi} ( \sqrt{n} Y_{lk} ) ,
\end{split}
\label{eq22}
\end{equation}
where $\bar{\Phi} = 1 - \Phi$ with $\Phi$ the distribution function of a standard normal variable. On $\Omega_n$, we have for $l \leq J_n$,
\begin{equation*}
Y_{lk} = (Y_{lk} - f_{0,lk}) + f_{0,lk} \geq   -  \sqrt{ \frac{ 2 J_n \log 2 + \frac{1}{2} \log n }{n} } + \overline{\gamma}' \sqrt{ \frac{\log n}{n} }  \geq \delta \sqrt{ \frac{\log n}{n} } 
\end{equation*}
for some $\delta = \delta(\overline{\gamma}') >0$ that can be made arbitrarily large by taking $\overline{\gamma}'$ large enough. Thus applying the standard tail bounds for $\bar{\Phi}$ we have that the right-hand side of \eqref{eq22} is bounded above by a multiple of
\begin{equation*}
\sqrt{n} \bar{\Phi} (\delta \sqrt{\log n} ) \leq  \frac{\sqrt{n}}{ \delta \sqrt{  2\pi \log n} }  e^{-\frac{1}{2} \delta^2 \log n} = C(\delta) \frac{ n^{ \frac{1}{2} - \frac{1}{2} \delta^2 } }{ \sqrt{\log n} }  .
\end{equation*}
Combining the above results, we have that for sufficiently large $\overline{\gamma}'$ (and hence $\delta$), \eqref{eq21} is bounded above by a constant times $n^{-B}$ for some $B > 0$, uniformly over the positive coefficients in $\mathcal{J}_n (\overline{\gamma}')$. In particular, the posterior median satisfies $\tilde{f}_{lk} > 0$ for all $(l,k) \in \mathcal{J}_n (\overline{\gamma}')$ with $f_{lk} > 0$ and $n$ large enough. The case $f_{0,lk} < 0$ is dealt with similarly, thereby proving \eqref{eq20}.
\end{proof}

\subsubsection*{A simultaneous estimator in $\mathcal{M}(w)$ and $L^\infty$}

It may be of interest to obtain an efficient estimator of $f_0$ in $\mathcal{M}(w)$ that is also an element of $L^\infty$, unlike $\mathbb{Y}$. Letting $\tilde{f} = \tilde{f}_n$ and $\hat{f}_n$ denote the posterior median and mean of the slab and spike procedure respectively, define the estimators
\begin{eqnarray}\label{efficient estimator1}
T_{n,jk}^{(1)} = \begin{cases} 
      Y_{jk} & \textrm{ if $j \leq j_0(n)$,} \\
      Y_{jk} 1_{ \{ \tilde{f}_{jk} \neq 0 \} } & \textrm{ if $ j_0(n) < j \leq \lfloor \log n/ \log 2 \rfloor $} .\\
      0 & \textrm{ if $\lfloor \log n/ \log 2 \rfloor < j$}, \\
   \end{cases} 
\end{eqnarray}
\begin{eqnarray}\label{efficient estimator2}
T_{n,jk}^{(2)} = \begin{cases} 
       \hat{f}_{n,jk} & \textrm{ if $j \leq j_0(n)$,} \\
      Y_{jk} 1_{ \{ \tilde{f}_{jk} \neq 0 \} } & \textrm{ if $ j_0(n) < j \leq \lfloor \log n/ \log 2 \rfloor $} .\\
      0 & \textrm{ if $\lfloor \log n/ \log 2 \rfloor < j$} .\\
   \end{cases} 
\end{eqnarray}

\begin{lemma}\label{efficient estimator lemma}
Consider the slab and spike prior $\Pi$ with lower threshold $j_0 (n) \rightarrow \infty$  satisfying $j_0 (n) = o(\log n)$ and let $(w_l)$ be any admissible sequence satisfying $w_{j_0(n)} = o(n^v)$ for any $v > 0$. Then the estimators $T_n^{(i)}$, $i=0,1$, defined in \eqref{efficient estimator1} and \eqref{efficient estimator2} satisfy for some $M'>0$ and any $M_n \rightarrow \infty$,
\begin{equation*}
\sup_{f_0 \in \mathcal{H}(\beta,R)} \P_0 ( \snorm{T_n^{(i)} - f_0}_{\mathcal{M}(w)} \geq M_n /\sqrt{n} ) \rightarrow 0  ,
\end{equation*}
\begin{equation*}
\sup_{f_0 \in \mathcal{H}(\beta,R)} \P_0 ( \snorm{T_n^{(i)} - f_0}_\infty \geq M' (\log n/n)^{\beta/(2\beta+1)} ) \rightarrow 0
\end{equation*}
as $n \rightarrow \infty$. Moreover, $\snorm{T_n^{(i)} - \mathbb{Y}}_{\mathcal{M}(w)} = o_{\P_0} (n^{-1/2})$, uniformly over $f_0 \in \mathcal{H}(\beta,R)$.
\end{lemma}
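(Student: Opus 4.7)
The plan is to work on the high-probability event $B_n$ from Lemma \ref{median lemma} (combined with the Gaussian deviation event $\Omega_n$ inside it), on which the posterior median correctly separates coefficients in $\mathcal{J}_n(\overline{\gamma}')$ from those in $\mathcal{J}_n^c(\underline{\gamma})$. Since $\P_0(B_n) \to 1$ uniformly over $\mathcal{H}(\beta,R)$, it suffices to bound each quantity on $B_n$. The main tool is a decomposition by resolution level: low ($l \leq j_0(n)$), middle ($j_0(n) < l \leq J_n$), and high ($l > J_n$). Coordinate-wise, $T_{n,lk}^{(i)} - f_{0,lk}$ equals either $Z_{lk}/\sqrt{n}$ (when the coefficient is retained), $-f_{0,lk}$ (when thresholded to zero), or, for $T^{(2)}$ only at low $l$, $\hat{f}_{n,lk} - f_{0,lk}$.

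For the $\mathcal{M}(w)$ rate (reading $M_n\to\infty$, since the stated $M_n\to 0$ is inconsistent with efficiency), tightness of $\mathbb{Z}$ in $\mathcal{M}(w)$ (Proposition 2 of \cite{CaNi2}) controls $\sup_l w_l^{-1}\max_k |Z_{lk}| = O_{\P_0}(1)$, giving the desired $O_{\P_0}(n^{-1/2})$ for any retained coefficient. For middle-frequency coefficients thresholded to zero, $(l,k)\notin\mathcal{J}_n(\overline{\gamma}')$ gives $|f_{0,lk}|/w_l \leq \overline{\gamma}'\sqrt{\log n/n}/w_{j_0(n)} = o(n^{-1/2})$ by the assumption $w_{j_0(n)}\gg\sqrt{\log n}$. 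High frequencies are handled by the H\"older bound $|f_{0,lk}|\leq R 2^{-l(\beta+1/2)}$, which is negligible compared with $w_l n^{-1/2}$. For $T^{(2)}$ at low $l$, add the term $|\hat{f}_{n,lk} - Y_{lk}|$, controlled below.

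For the $L^\infty$ rate, use $\|g\|_\infty \leq C(\psi)\sum_l 2^{l/2}\max_k|g_{lk}|$ and choose $2^{J_n(\beta)}\simeq (n/\log n)^{1/(2\beta+1)}$ so that $\mathcal{J}_n(\underline{\gamma})\subset\{l\leq J_n(\beta)\}$. The low-frequency contribution is $O(2^{j_0(n)/2}\sqrt{\log n/n})$, which is $o((\log n/n)^{\beta/(2\beta+1)})$ because $j_0(n)=o(\log n)$ gives $2^{j_0(n)/2}=o(n^{v})$ for any $v>0$. At levels $j_0(n)<l\leq J_n(\beta)$, each term contributes at most $\sqrt{(l+\log n)/n}$ on $\Omega_n$; summing $2^{l/2}\sqrt{\log n/n}$ up to $J_n(\beta)$ yields $O((\log n/n)^{\beta/(2\beta+1)})$. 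At levels $J_n(\beta)<l\leq J_n$, $B_n$ forces $T_{n,lk}^{(i)}=0$ and the H\"older bound gives $\sum_{l>J_n(\beta)}2^{-l\beta}=O(2^{-J_n(\beta)\beta})$, again the minimax rate. Levels $l>J_n$ give a polynomial-in-$n$ tail that is negligible. Uniformity in $f_0$ follows because all bounds depend only on $R$ and $\beta$.

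For the efficiency statement $\|T_n^{(i)}-\mathbb{Y}\|_{\mathcal{M}(w)}=o_{\P_0}(n^{-1/2})$, the low-frequency piece vanishes identically for $T^{(1)}$; for $T^{(2)}$ it reduces to showing $|\hat{f}_{n,lk}-Y_{lk}|=o_{\P_0}(n^{-1/2})$ at each low-frequency coordinate, which follows from a Laplace expansion of the univariate posterior mean against the bounded density $g$ (under the mild regularity implicit in the slab, this gives $|\hat{f}_{n,lk}-Y_{lk}|=O_{\P_0}(n^{-1})$). At middle frequencies, either the term is identically zero or, when the median thresholds to zero, $|Y_{lk}|\leq|f_{0,lk}|+|Z_{lk}|/\sqrt{n}$ with $|f_{0,lk}|\leq \overline{\gamma}'\sqrt{\log n/n}$; dividing by $w_l\geq w_{j_0(n)}\gg\sqrt{\log n}$ yields $o(n^{-1/2})$. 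At high frequencies, since $\mathbb{Z}$ is tight in the \emph{separable} subspace $\mathcal{M}_0(w)$, $\sup_{l>L}w_l^{-1}\max_k|Z_{lk}|\to 0$ in $\P_0$-probability as $L\to\infty$, so taking $L=J_n\to\infty$ gives $o(n^{-1/2})$; the H\"older piece is even smaller. The main obstacle is the low-frequency analysis for $T^{(2)}$, which is the only step that is sensitive to how smooth the slab density $g$ is assumed to be, and ensuring that every constant in the above chain is uniform over $f_0 \in \mathcal{H}(\beta,R)$ — this follows because Lemma \ref{median lemma} is itself uniform and all subsequent estimates involve $f_0$ only through $R$.
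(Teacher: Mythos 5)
Your decomposition by resolution level, the use of the event $B_n$ from Lemma \ref{median lemma}, the treatment of the middle frequencies via $w_{j_0(n)} \gg \sqrt{\log n}$, the high-frequency H\"older bound, and the $\sum_l 2^{l/2}\max_k|\cdot|$ computation for the $L^\infty$ rate all match the paper's proof essentially step for step (and you are right that $M_n \to 0$ in the statement should read $M_n \to \infty$). The one place where you genuinely diverge is the low-frequency block $l \leq j_0(n)$ for $T_n^{(2)}$, and that is where your argument has a gap.

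You propose a coordinate-wise Laplace expansion giving $|\hat f_{n,lk} - Y_{lk}| = O_{\P_0}(n^{-1})$. Two problems. First, writing $\hat f_{n,lk} - Y_{lk} = n^{-1/2}\int u\, e^{-u^2/2} g(Y_{lk}+u/\sqrt n)\,du \,/\!\int e^{-u^2/2} g(Y_{lk}+u/\sqrt n)\,du$, the numerator is only $o(1)$ if $g$ is continuous and $O(n^{-1/2})$ if $g$ is Lipschitz; the paper assumes only that $g$ is bounded and strictly positive, under which the expansion gives merely $O(n^{-1/2})$, which is not the required $o(n^{-1/2})$. You flag this sensitivity yourself, but it means your argument needs a hypothesis the lemma does not grant. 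Second, and more structurally, a per-coordinate $O_{\P_0}(n^{-1})$ statement does not combine into the needed bound $\sup_{l\le j_0(n)} w_l^{-1}\max_k |\hat f_{n,lk}-Y_{lk}| = o(n^{-1/2})$, because the maximum runs over of order $2^{j_0(n)}\to\infty$ coordinates; you would need either a deterministic bound on the event $\Omega_n$ (uniform in $(l,k)$, using that $g$ is bounded below on the compact range of the $Y_{lk}$ there) or a union bound with quantified tails, neither of which is supplied. The paper avoids both issues by arguing at the level of the norm: it establishes the uniform moment bound $\E_0\E^{\Pi}[\snorm{\sqrt n\,\pi_{j_0(n)}(f-\mathbb{Y})}_{\mathcal{M}(w)}^{1+\epsilon}\mid Y]\le C(\epsilon)$ (following Theorem 2 of \cite{CaNi2}) and combines it with the weak convergence of $\Pi(\cdot\mid Y)\circ\tau_{\mathbb{Y}}^{-1}$ to $\mathcal{N}$ and uniform integrability, so that $\sqrt n\,\pi_{j_0(n)}(\E^{\Pi}(f\mid Y)-\mathbb{Y})\to \E\mathcal{N}=0$ in $\mathcal{M}_0(w)$ in probability. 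If you want to keep a direct computation, you should at minimum make the Laplace bound uniform over the low-frequency coordinates on $\Omega_n$ and state the extra regularity of $g$ you are invoking; otherwise the weak-convergence route is the one that closes the argument under the stated assumptions.
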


\begin{proof}
Since $\mathbb{Y}$ is an efficient estimator of $f_0$ in $\mathcal{M}$, if suffices to establish $\snorm{T_n^{(i)} - \mathbb{Y}}_\mathcal{M} = o_{\P_0}(n^{-1/2})$ to show that $T_n^{(i)}$ is also an efficient estimator of $f_0$ in $\mathcal{M}$. Consider firstly $j > j_0(n)$, where the estimators coincide, and let $J_n(\beta)$ be as in the proof of Theorem \ref{slab and spike contraction}. On the event $B_n$ defined in \eqref{eq19} and following the proof of Theorem \ref{slab and spike contraction}, we have
\begin{equation*}
\begin{split}
\snorm{\pi_{>j_0(n)} (T_n^{(i)} - \mathbb{Y} ) }_\mathcal{M} &  \leq \max_{j_0(n) < l \leq J_n(\beta)} w_l^{-1} \max_k |Y_{lk}1_{ \{ \tilde{f}_{lk} = 0 \} }| + \max_{l \geq J_n(\beta)} w_l^{-1} \max_k |f_{0,lk}|  \\
& \leq \max_{j_0(n) < l \leq J_n(\beta)} w_l^{-1} \max_{k:(l,k)\in \mathcal{J}_n^c (\overline{\gamma})} \left( |Y_{lk} - f_{0,lk}| + |f_{0,lk}| \right)   + o(n^{-1/2})  \\
& \leq C w_{j_0(n)}^{-1} \sqrt{(\log n)/n} + o(n^{-1/2}) = o(n^{-1/2})
\end{split}
\end{equation*}
by the choice of $j_0(n)$.

For $j \leq j_0(n)$ and $i=1$, we trivially have $\snorm{\pi_{j_0(n)} (T_n^{(1)} - \mathbb{Y})}_\mathcal{M} = 0$. Consider now $i=2$. Arguing as in Theorem 2 of \cite{CaNi2} and using the conditions on $j_0(n)$, one obtains the uniform bound $\E_0 \E^{\Pi} [ \snorm{\sqrt{n} \pi_{j_0(n)} (f-\mathbb{Y}) }_{\mathcal{M}(w)}^{1+\epsilon} \mid Y] \leq C(\epsilon)$ for $\epsilon >0$ small enough. From the weak convergence of $\Pi (\cdot \mid Y) \circ \tau_{\mathbb{Y}}^{-1}$ towards $\mathcal{N}$ and a uniform integrability argument (via the moment bound), it follows as in Theorem 10 of \cite{CaNi} that $\sqrt{n} \pi_{j_0(n)} (\E^{\Pi} (f | Y) - \mathbb{Y}) \rightarrow \E \mathcal{N} =0$ in $\mathcal{M}_0(w)$ in probability, which implies the result.

For $L^\infty$ we have on $B_n$,
\begin{equation*}
\begin{split}
\snorm{\pi_{>j_0(n)} (T_n^{(i)} - f_0 ) }_\infty &  \lesssim  \sum_{l=j_0(n)+1}^{J_n(\beta)} 2^{l/2} \max_k  \left( |Y_{lk}-f_{0,lk}|1_{ \{ (l,k) \in \mathcal{J}_n (\underline{\gamma}) \} } + |f_{0,lk}| 1_{ \{ (l,k) \in \mathcal{J}_n^c (\overline{\gamma}) \} } \right) \\
& \quad \quad + \sum_{l=J_n(\beta)+1}^\infty  2^{l/2} \max_k |f_{0,lk}|  \\
& \lesssim 2^{J_n(\beta)/2} \sqrt{\frac{\log n}{n}}  + R 2^{-J_n(\beta)\beta} \lesssim \left( \frac{\log n}{n} \right)^\frac{\beta}{2\beta+1}  .
\end{split}
\end{equation*}
Now
\begin{equation*}
\snorm{\pi_{j_0(n)}(T_n^{(1)} - f_0) }_\infty \lesssim \sum_{l=0}^{j_0(n)} 2^{l/2} \max_k |Y_{lk} - f_{0,lk}| \lesssim 2^{j_0(n)/2} \sqrt{\frac{\log n}{n}} \lesssim \left( \frac{\log n}{n} \right)^\frac{\beta}{2\beta+1} ,
\end{equation*}
thereby proving the second statement for $T_n^{(1)}$. For $T_n^{(2)}$, using the convergence of the posterior mean to $\mathbb{Y}$ in $\mathcal{M}_0$ for $j \leq j_0(n)$ shown above,
\begin{equation*}
\begin{split}
\snorm{\pi_{j_0(n)} (T_n^{(2)} - T_n^{(1)}) }_\infty & \lesssim \sum_{l=0}^{j_0(n)} 2^{l/2} \max_k |\hat{f}_{n,lk} - Y_{lk}| \\
& =  o_{\P_0} \left( \sum_{l=0}^{j_0(n)} 2^{l/2} \frac{w_l}{\sqrt{n}}\right) = o_{\P_0} \left( w_{j_0(n)} \frac{2^{j_0(n)/2}}{\sqrt{n}} \right) = o_{\P_0}  \left(  \left( \frac{\log n}{n} \right)^\frac{\beta}{2\beta+1} \right).
\end{split}
\end{equation*}
\end{proof}

\subsection{Wavelets}\label{wavelet section}

Let us briefly recall the notion of periodized and boundary corrected wavelets and discuss condition \eqref{wavelet}. Let $\phi$, $\psi$ denote a scaling and corresponding wavelet function on $\R$ satisfying
\begin{equation}
\sup_{x \in \mathbb{R}} \sum_{k \in \mathbb{Z}} |\phi(x-k)| < \infty, \quad \quad \sup_{x \in \mathbb{R}} \sum_{k \in \mathbb{Z}} |\psi(x-k)| < \infty.
\label{wavelet bound on R}
\end{equation}
Examples include Meyer wavelets (see Section 2 in \cite{Me} for other choices).

Consider firstly the periodic case. As usual define the dilated and translated wavelet at resolution level $j$ and scale position $k/2^j$ by $\phi_{jk}(x) = 2^{j/2} \phi (2^j x-k)$, $\psi_{jk}(x) = 2^{j/2} \psi (2^j x-k)$ for $j,k \in \mathbb{Z}$. Periodize the wavelet functions via
\begin{equation*}
\phi_{jk}^{per} (x) = \sum_{m \in \mathbb{Z}} \phi_{jk} (x+m), \quad  \quad \psi_{jk} ^{per} (x) = \sum_{m \in \mathbb{Z}} \psi_{jk} (x+m) , \quad x \in [0,1]
\end{equation*}
for $j = 0,1,...$ and $k = 0,...,2^j -1$. Then the wavelet system $\{ \phi_{J_0 k}^{per}, \psi_{jm}^{per} : k=0,...,2^{J_0}-1, m = 0,...,2^j-1, j= J_0,J_0+1,... \}$ forms an orthonormal wavelet basis of $L^2 ((0,1])$ and satisfies \eqref{wavelet} due to \eqref{wavelet bound on R}.

In the case of $\R$, an orthonormal basis of $V_j = \text{span} \{ (\phi_{jk})_k \}$, $j \geq J_0$, can be obtained by taking $2^{j-J_0}$ dilations of the orthonormal basis $( \phi_{J_0k} )_k$ of a basic resolution space $V_{J_0}$. In the case of boundary corrected wavelets, the analogous orthonormal basis of the basic resolution space $V_{J_0}$, $J_0 \in \mathbb{N}$, contains $2^{J_0}$ elements and consists of 3 components. At resolution level $J_0$, a basis consists of 3 components. Firstly, $N$ left edge functions $\phi_{J_0k}^{left}(x) = 2^{J_0/2} \phi_{k}^{left}(2^{J_0} x)$, $k=0,...,N-1$, where $\phi_k^{left}$ is a modification of $\phi$ that remains bounded and has compact support. Secondly, $N$ right edge functions $\phi_{J_0k}^{right}(x) = 2^{J_0/2} \phi_{k}^{right}(2^{J_0} x)$, $k=0,...,N-1$, with the same properties. Thirdly, $2^{J_0}-2N$ interior functions, that are the usual translates of dilations of $\phi$ defined on $\R$, that is $\phi_{J_0k}$ for $k = N,...,2^{J_0}-N-1$, which we note are all supported in the interior of $[0,1]$. Writing for convenience $\{ \phi_{J_0k}^{bc}: k = 0,...,2^{J_0}-1 \}$ instead of $\{ \phi_{J_0k}^{left} , \phi_{J_0k'}^{right}, \phi_{J_0m}: k=0,...,N-1, k'=0,...,N-1, m = N,...,2^{J_0} - N-1 \}$, we have the first part of \eqref{wavelet}
\begin{equation}
\begin{split}
\sum_{k=0}^{2^{J_0}-1} |\phi_{J_0k}^{bc}(x)| & \leq 2^{J_0/2} N \max_{0 \leq k < N} \snorm{\phi_k^{left}}_\infty  + 2^{J_0/2} N \max_{0 \leq k < N} \snorm{\phi_k^{right}}_\infty  + \sum_{k=N}^{2^{J_0}-N-1} 2^{J_0/2}|\phi(2^{J_0} x-k)|  \\
& \leq 2^{J_0/2} C(N,\phi) + 2^{J_0/2} C'(\phi),
\label{wavelet function bound}
\end{split}
\end{equation}
where we have used that $N$ is fixed and that the original wavelet function on $\R$ satisfies \eqref{wavelet bound on R}.

Starting at resolution level $J_0$ with the usual dilated wavelets on $\R$, $\psi_{J_0k} (x)= 2^{J_0/2} \psi (2^{J_0}x-k)$, $2^{J_0} \geq N$, it is possible to construct corresponding boundary wavelet functions
\begin{equation*}
\{ \psi_{J_0k}^{left}, \psi_{J_0k'}^{right}, \psi_{J_0m} : k=0,...,N-1, k'=0,...,N-1,m=N,...,2^{J_0}-N-1  \}  .
\end{equation*}
For $j \geq J_0$, we can then define the dilates of the boundary wavelets in the usual way:
\begin{equation*}
\psi_{jk}^{left}(x) = 2^{(j-J_0)/2} \psi_{J_0 k}^{left} (2^{j-J_0}x) , \quad \quad \psi_{jk}^{right}(x) = 2^{(j-J_0)/2} \psi_{J_0 k}^{right} (2^{j-J_0}x) .
\end{equation*}
This yields the required wavelets at resolution level $j$, namely $\{ \psi_{jk}^{left} , \psi_{jk'}^{right}, \psi_{jm}: k=0,...,N-1, k'=0,...,N-1, m = N,...,2^{j} - N-1 \}$, which for convenience we write as $\{ \psi_{jk}^{bc}: k = 0,...,2^{j}-1 \}$. Arguing as in \eqref{wavelet function bound} and again using \eqref{wavelet bound on R} gives the second part of \eqref{wavelet}.

\subsection{Weak convergence}\label{weak convergence section}

For $\mu$ and $\nu$ probability measures on a metric space $(S,d)$, define the bounded Lipschitz metric by
\begin{equation}
\beta_S (\mu , \nu) = \sup_{u: \norm{u}_{BL} \leq 1} \left| \int_S u(s) (d\mu (s) - d\nu(s) ) \right| ,
\label{bounded Lipschitz metric}
\end{equation}
\begin{equation*}
\norm{u}_{BL} = \sup_{s \in S} | u(s) | + \sup_{s,t \in S: s \neq t} \frac{| u(s) - u(t)| }{ d(s,t) }.
\end{equation*}
$\beta_S$ metrizes the weak convergence of probability distributions, that is random variables $X_n \rightarrow^d X$ converge in distribution in $(S,d)$ if and only if $\beta_S ( \mathcal{L}(X_n) , \mathcal{L}(X)) \rightarrow 0$, where $\mathcal{L}(X)$ denotes the law of $X$. In particular, we shall consider the choices $S = H(\delta) = H_2^{-1/2,\delta}$ or $S = H^{-s}$ for $s > 1/2$ in $\ell_2$ and $S = \mathcal{M}_0 (w)$ for $\{ w_l \}_{l \geq 1}$ an admissible sequence in $L^\infty$.

\subsection{Results on empirical and hierarchical Bayes procedures}\label{other paper section}

Let us recall some definitions and results from \cite{KnSzVVVZ,SzVVVZ} that appear in proofs elsewhere. Define $h_n : (0,\infty) \rightarrow [0,\infty)$ to be
\begin{equation}
h_n (\alpha) = \frac{1 + 2\alpha}{n^{1/(2\alpha + 1)}  \log n} \sum_{k=1}^\infty \frac{n^2 k^{2\alpha+1} f_{0,k}^2 \log k}{(k^{2\alpha+1} + n)^2}
\label{hn function}
\end{equation}
and for $0 < l < L$ define the bounds
\begin{equation*}
\underline{\alpha}_n = \inf \{ \alpha >0 : h_n (\alpha) > l \} \wedge \sqrt{\log n}  ,
\end{equation*}
\begin{equation*}
\overline{\alpha}_n = \inf \{ \alpha > 0 : h_n (\alpha) > L (\log n)^2 \} .
\end{equation*}

The behaviour of the empirical Bayes estimator $\hat{\alpha}_n$ defined in \eqref{likelihood} is contained in Lemma 3.11 of \cite{SzVVVZ}, which is summarized below for convenience.

\begin{lemma}[Szab\'o et al.]\label{smoothness lemma}
Fix $\beta_{max} > 0$. For any $0 < \beta \leq \beta_{max}$ and $R \geq 1$, there exist constants $K_1$ and $K_2$ such that $\P_0 ( \beta - K_1 / \log n \leq \hat{\alpha}_n \leq \beta + K_2 / \log n ) \rightarrow 1$ uniformly over $f_0 \in \mathcal{Q}_{SS} (\beta,R,\varepsilon)$.
\end{lemma}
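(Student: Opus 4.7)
Since this lemma is attributed to Szab\'o et al.\ and is essentially their Lemma 3.11 of \cite{SzVVVZ}, any proof here would follow the same blueprint; I describe the plan I would use.

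The plan is to study the marginal log--likelihood $\ell_n(\alpha)$ and show that it is sharply peaked near $\alpha = \beta$ under self-similarity. First, I would split $\ell_n(\alpha) = m_n(\alpha) + r_n(\alpha)$, where $m_n(\alpha) = \E_0 \ell_n(\alpha)$ is the deterministic ``drift'' and $r_n(\alpha) = \ell_n(\alpha) - m_n(\alpha)$ is the stochastic remainder. A direct computation using $Y_k = f_{0,k} + Z_k/\sqrt{n}$ with $Z_k \sim N(0,1)$ gives
\begin{equation*}
m_n(\alpha) = -\frac{1}{2} \sum_{k=1}^\infty \left( \log\!\left( 1 + \frac{n}{k^{2\alpha+1}} \right) - \frac{n^2 k^{2\alpha+1}}{(k^{2\alpha+1}+n)^2} - \frac{n^2 f_{0,k}^2}{k^{2\alpha+1}+n} \right),
\end{equation*}
and $r_n(\alpha)$ is a centered weighted sum of $(Z_k^2 - 1)$ and $Z_k f_{0,k}$ terms.

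Second, I would locate the maximizer of $m_n(\alpha)$. Differentiating in $\alpha$, the stationary point equation has the heuristic form ``bias$^2$ $\asymp$ variance'', i.e.\ $\sum_{k>k_n(\alpha)} f_{0,k}^2 \asymp k_n(\alpha)/n$ with $k_n(\alpha) = n^{1/(2\alpha+1)}$. Under the self-similarity condition of Definition \ref{self condition} applied block by block, the tail sum $\sum_{k > N} f_{0,k}^2$ is pinned from above (by $f_0 \in \mathcal{Q}(\beta,R)$) and from below (by summing the self-similarity inequality over geometric blocks) at the order $N^{-2\beta}$. Together these force the maximizer $\alpha_n^*$ of $m_n$ to satisfy $|\alpha_n^* - \beta| = O(1/\log n)$, because rescaling $\alpha$ by $c/\log n$ perturbs $k_n(\alpha) = e^{(\log n)/(2\alpha+1)}$ multiplicatively by a constant.

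Third, I would quantify the curvature and the fluctuations. A second-order expansion gives $m_n(\alpha_n^*) - m_n(\alpha) \gtrsim n^{1/(2\alpha+1)} (\alpha - \alpha_n^*)^2$ on a neighborhood, and self-similarity is used again to get the matching lower bound on this curvature (not just an upper bound). For the stochastic part, $r_n(\alpha)$ is a sub-exponential quadratic form in the $Z_k$'s whose variance can be controlled by a Hanson--Wright or Laurent--Massart inequality, yielding $\sup_{\alpha \in [0,a_n]} |r_n(\alpha) - r_n(\alpha_n^*)| = O_{\P_0}(\sqrt{n^{1/(2\alpha+1)} \log n})$ after a chaining/net argument on the (essentially one-dimensional, thanks to monotonicity properties of the summands in $\alpha$) parameter space. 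Combining the lower bound on curvature with this fluctuation bound, any $\alpha$ with $|\alpha - \beta| \geq K/\log n$ satisfies $\ell_n(\alpha) < \ell_n(\alpha_n^*)$ on an event of probability tending to $1$, forcing $\hat{\alpha}_n$ into the $(K_1/\log n, K_2/\log n)$-window.

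The main obstacle is the sharp $1/\log n$ rate (as opposed to a cruder rate like $1/\log\log n$): it requires the \emph{lower} bound on curvature to match the \emph{upper} bound, and this matching is precisely what the self-similarity condition buys. Establishing the bracketing $\underline{\alpha}_n \leq \hat{\alpha}_n \leq \overline{\alpha}_n$ and then showing $\overline{\alpha}_n - \underline{\alpha}_n = O(1/\log n)$ under $f_0 \in \mathcal{Q}_{SS}(\beta,R,\varepsilon)$ is the technical core; uniformity over $f_0$ follows because all constants in the self-similarity bound depend only on $(\beta,R,\varepsilon)$ and $\beta_{max}$.
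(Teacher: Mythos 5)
The paper does not actually prove this lemma: its ``proof'' is the sentence preceding the statement, which defers entirely to Lemma 3.11 of \cite{SzVVVZ}, together with the quantities $h_n$, $\underline{\alpha}_n$, $\overline{\alpha}_n$ recalled in Section \ref{other paper section}. So there is nothing in-paper to match, and your decision to sketch the Szab\'o et al.\ argument (bracketing $\underline{\alpha}_n \leq \hat{\alpha}_n \leq \overline{\alpha}_n$ with high probability, then using self-similarity to force $\overline{\alpha}_n - \underline{\alpha}_n = O(1/\log n)$ around $\beta$) is the right blueprint and correctly identifies where self-similarity enters (the matching lower bound on the signal at the critical frequency $n^{1/(2\beta+1)}$).

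Two concrete issues in the sketch, though. First, your displayed formula for $m_n(\alpha) = \E_0 \ell_n(\alpha)$ is wrong in the noise term: since $\E_0 Y_k^2 = f_{0,k}^2 + 1/n$, the correct expression is
\begin{equation*}
m_n(\alpha) = -\frac{1}{2}\sum_{k=1}^\infty \left( \log\!\left(1+\frac{n}{k^{2\alpha+1}}\right) - \frac{n}{k^{2\alpha+1}+n} - \frac{n^2 f_{0,k}^2}{k^{2\alpha+1}+n} \right),
\end{equation*}
whereas the term $n^2 k^{2\alpha+1}/(k^{2\alpha+1}+n)^2$ you wrote belongs to the expected \emph{score} (it is the structure underlying $h_n$), not to $\E_0\ell_n$; since the location of the drift maximizer is exactly the balance you go on to analyze, this needs to be redone carefully. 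Second, the step from the local quadratic expansion around $\alpha_n^*$ to excluding \emph{all} $\alpha\in[0,a_n]$ with $|\alpha-\beta|\ge K/\log n$ is asserted rather than argued: a Taylor expansion is only valid locally, the claimed curvature lower bound $\gtrsim n^{1/(2\alpha+1)}(\alpha-\alpha_n^*)^2$ is not justified globally (and the fluctuation bound $O_{\P_0}(\sqrt{n^{1/(2\alpha+1)}\log n})$ has an $\alpha$-dependent size, so taking a supremum over $[0,a_n]$ requires a peeling or monotonicity argument, not a single net). This is precisely why Szab\'o et al.\ work globally with the sign of the score via $h_n$ and the brackets $\underline{\alpha}_n,\overline{\alpha}_n$ rather than with a curvature-plus-chaining M-estimation argument; your plan would need to be filled in along those lines (or an equivalent global argument supplied) before it constitutes a proof.
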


As mentioned in the discussion following the lemma in \cite{SzVVVZ}, the constant $K_2$ is negative for large enough $R$ so that the estimate $\hat{\alpha}_n$ undersmooths the true $\beta$. We have an analogous result in the hierarchical case.

\begin{lemma}\label{posterior median lemma}
The posterior median $\alpha_n^M$ of the marginal posterior distribution $\lambda_n ( \cdot | Y)$ satisfies
\begin{equation*}
\inf_{f_0 \in \mathcal{Q} (\beta , R) } \P_0 \left( \alpha_n^M \in [\underline{\alpha}_n , \overline{\alpha}_n ]  \right) \rightarrow 1
\end{equation*}
as $n \rightarrow \infty$. Moreover, for $C = C(\beta,R,\varepsilon,\rho)$,
\begin{equation*}
\inf_{f_0 \in \mathcal{Q}_{SS}(\beta , R,\varepsilon) } \P_0 \left( \left| \alpha_n^M - \beta \right| \leq C / \log n  \right) \rightarrow 1 .
\end{equation*}
\end{lemma}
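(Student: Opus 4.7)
The plan is to reduce the claim about the posterior median $\alpha_n^M$ to two separate inputs: (i) concentration of the marginal posterior $\lambda_n(\cdot\mid Y)$ on the deterministic window $[\underline\alpha_n,\overline\alpha_n]$, and (ii) quantitative bounds on that window for self-similar truths. The first gives the coarse containment; the second upgrades it to the logarithmic rate.

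First I would show that, uniformly over $f_0\in\mathcal{Q}(\beta,R)$,
\begin{equation*}
\lambda_n\bigl([\underline\alpha_n,\overline\alpha_n]\bigm|Y\bigr)\longrightarrow 1 \quad \text{in } \P_0\text{-probability.}
\end{equation*}
This is essentially the content of the proof of Theorem 2.5 in \cite{KnSzVVVZ} (used already in the proof of Theorem \ref{parametric BvM theorem}): the marginal posterior density has the form $\lambda_n(\alpha\mid Y)\propto \lambda(\alpha)\exp(\ell_n(\alpha))$, and one controls $\ell_n(\alpha)-\ell_n(\alpha^\ast)$ for $\alpha$ outside the window by splitting the difference into a deterministic ``bias'' term driven by $h_n$ and a stochastic remainder that can be bounded using Gaussian concentration for quadratic forms in $Y_k$. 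The monotonicity properties of $h_n$ together with the definitions of $\underline\alpha_n$ and $\overline\alpha_n$ force $\ell_n(\alpha)-\ell_n(\alpha^\ast)\to-\infty$ sufficiently fast outside $[\underline\alpha_n,\overline\alpha_n]$, and Condition \ref{HB density condition} on $\lambda$ provides the polynomial/exponential lower bound on the prior needed to absorb the normalizing constant. Uniformity in $f_0\in\mathcal{Q}(\beta,R)$ follows because all constants in these bounds depend only on $\beta$ and $R$.

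Given this concentration, the median argument is immediate: since $\lambda_n((-\infty,\underline\alpha_n)\mid Y)+\lambda_n((\overline\alpha_n,\infty)\mid Y)=o_{\P_0}(1)$, each tail is eventually smaller than $1/2$, so any median must lie in $[\underline\alpha_n,\overline\alpha_n]$. This proves the first display.

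For the second display I would invoke the characterisation of $\underline\alpha_n,\overline\alpha_n$ under self-similarity. For $f_0\in\mathcal{Q}_{SS}(\beta,R,\varepsilon)$, the same computation that underlies Lemma \ref{smoothness lemma} (Lemma 3.11 of \cite{SzVVVZ}) shows that $h_n(\alpha)$ crosses any fixed level in a window of size $O(1/\log n)$ around $\beta$: the self-similarity lower bound $\sum_{k=N}^{\lceil\rho N\rceil}f_{0,k}^2\gtrsim N^{-2\beta}$ and the Sobolev upper bound $f_{0,k}^2\lesssim k^{-2\beta-1}$ jointly pin $h_n(\beta)$ up to multiplicative constants, while the factor $n^{(\beta-\alpha)/((2\alpha+1)(2\beta+1))}$ in $h_n(\alpha)/h_n(\beta)$ forces an $O(1/\log n)$ width for the transition. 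Consequently there exist constants $C_1,C_2$ depending only on $(\beta,R,\varepsilon,\rho)$ such that
\begin{equation*}
\beta-C_1/\log n \le \underline\alpha_n \le \overline\alpha_n \le \beta+C_2/\log n
\end{equation*}
with $\P_0$-probability tending to $1$, uniformly in $f_0\in\mathcal{Q}_{SS}(\beta,R,\varepsilon)$. Combined with the first part this yields $|\alpha_n^M-\beta|\le C/\log n$ with $C=\max(C_1,C_2)$.

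The main obstacle is step (i), the marginal posterior concentration on $[\underline\alpha_n,\overline\alpha_n]$, since it requires carefully balancing the deterministic behaviour of $h_n$ against the stochastic fluctuations of $\ell_n$ uniformly in $f_0\in\mathcal{Q}(\beta,R)$; once this concentration is in hand, the median statement is essentially free, and the self-similar refinement reduces to the analytic properties of $h_n$ already exploited in \cite{SzVVVZ}.
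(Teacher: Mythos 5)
Your proposal is correct and matches the paper's intended argument: the paper's proof is simply the one-line citation ``this follows directly from the proof of Theorem 2.5 of \cite{KnSzVVVZ}'', and what you spell out -- concentration of $\lambda_n(\cdot\mid Y)$ on $[\underline{\alpha}_n,\overline{\alpha}_n]$ from that proof, the elementary observation that both tails being $o_{\P_0}(1)$ traps the median, and the deterministic $O(1/\log n)$ localization of $\underline{\alpha}_n,\overline{\alpha}_n$ around $\beta$ under self-similarity as in the analysis behind Lemma \ref{smoothness lemma} -- is exactly the content of that citation. The only cosmetic remark is that $\underline{\alpha}_n$ and $\overline{\alpha}_n$ are deterministic (they depend only on $f_0$ and $n$ through $h_n$), so the phrase ``with $\P_0$-probability tending to one'' in your window bound is unnecessary.
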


\begin{proof}
This follows directly from the proof of Theorem 3 of \cite{KnSzVVVZ}.
\end{proof}

\section*{Acknowledgements}
The author would like to thank Richard Nickl, Aad van der Vaart, Johannes Schmidt-Hieber, the Associate Editor and two referees for their valuable comments. The author would like to express particular thanks to one referee for a very detailed report, including suggesting a simplified argument for Theorems \ref{EB parametric BvM theorem} and \ref{parametric BvM theorem}.

\bibliography{Reference}{}
\bibliographystyle{acm}

\end{document}